\documentclass[10pt, reqno]{amsart}
\usepackage{amsthm, amsfonts, amssymb, color}
\usepackage{mathrsfs}
\usepackage{geometry}
\usepackage{enumitem}
\usepackage{tikz}
\usepackage{graphicx}
\usepackage{subfigure}
\usepackage{threeparttable}  
\usepackage{booktabs}        
\usepackage{diagbox}
\usepackage{romannum}
\usetikzlibrary{angles, quotes}
\usetikzlibrary{intersections, decorations.pathreplacing}
\geometry{left=1.2in, right=1.2in, top=1in, bottom=1in}
\usepackage[colorlinks=true, linkcolor=blue]{hyperref}

\newcommand{\R}{\mathbb R}

\newcommand{\N}{\mathbb{N}}

\newcommand{\les}{\lesssim}

\DeclareMathOperator{\Ker}{Ker}

\DeclareMathOperator{\sgn}{sgn}

\def\d{{\rm d}}
\def\i{{\rm i}}
\def\k{{\mathbf k}}

\newtheorem{theorem}{Theorem}[section]
\newtheorem{proposition}[theorem]{Proposition}
\newtheorem{assumption}[theorem]{Assumption}
\newtheorem{corollary}[theorem]{Corollary}
\newtheorem{lemma}[theorem]{Lemma}
\newtheorem{definition}[theorem]{Definition}

\newtheorem{remark}[theorem]{Remark}

\numberwithin{equation}{section}

\title[]
{Pointwise estimates for the fundamental solutions of higher order Schr\"{o}dinger equations in low odd dimensions}

\author{Han Cheng,\, Shanlin Huang,\,Tianxiao Huang,\,  Quan Zheng}

\address{Han Cheng,  Institute of Applied Physics and Computational Mathematics, Beijing, 100088, China.}
\email{chh@hust.edu.cn}

\address{Shanlin Huang, School of Mathematics (Zhuhai), Sun Yat-sen University, Zhuhai 519082, Guangdong, China}
\email{huangshlin6@mail.sysu.edu.cn}

\address{Tianxiao Huang, School of Mathematics (Zhuhai), Sun Yat-sen University, Zhuhai, 519082, Guangdong, China }
\email{htx5@mail.sysu.edu.cn}

\address{Quan Zheng, School of Mathematics and Statistics, Huazhong University of Science and Technology, Wuhan 430074, Hubei, China }
\email{qzheng@hust.edu.cn}

\subjclass[2010]{35A08, 81Q10}
\keywords{Fundamental solution, higher order Schr\"{o}dinger equation, zero resonances.}


\begin{document}
\pagenumbering{arabic}

\begin{abstract}
In this paper, we study the fundamental solution of the higher order Schr\"odinger equation
\begin{equation*}
	\mathrm{i}\partial_t u(x,t) = \big((-\Delta)^m + V(x)\big)u(x,t), \quad t \in \mathbb{R}, \ x \in \mathbb{R}^n,
\end{equation*}
for any odd dimension $n$ and integer $m \geq 1$ satisfying $n < 4m$, where $V$ is a real-valued bounded potential with suitable decay. 

Let $P_{ac}(H)$ denote the projection onto the absolutely continuous spectral subspace of $H = (-\Delta)^m + V$, and assume $H$ has no positive embedded eigenvalues. Our main result says that the evolution operator $e^{-\mathrm{i}tH}P_{ac}(H)$ has an integral kernel $K(t,x,y)$ satisfying the pointwise estimate
\begin{equation*}
	|K(t,x,y)| \leq C (1 + |t|)^{-h} (1 + |t|^{-\frac{n}{2m}}) \left(1 + |t|^{-\frac{1}{2m}}|x - y|\right)^{-\frac{n(m-1)}{2m-1}}, \quad t \neq 0, \ x,y \in \mathbb{R}^n,
\end{equation*}
where the exponent $h$ depends on $m$, $n$, and the zero energy resonance structure of $H$. We also prove analogous estimates for smoothing operators of the form $H^{\frac{\alpha}{2m}}e^{-\mathrm{i}tH}P_{ac}(H)$.

The key innovation of this paper is a unified approach to deriving asymptotic expansions of the perturbed resolvents around zero, which comprehensively addresses all possible resonance types.
\end{abstract}

\maketitle
\begingroup
\tableofcontents

\section{Introduction}\label{section1}

\subsection{Background and Motivation}\

In this paper, we study space-time  pointwise estimate for the fundamental solution of higher order Schr\"{o}dinger equation
\begin{equation}\label{equ1.1.1}
	\i\partial_tu(x, t)=((-\Delta)^m+V(x))u(x, t), \quad t\in \mathbb{R},\,\,x\in \mathbb{R}^n,
\end{equation}
where $m$ is any positive integer, and $V$ is a real-valued decaying potential in $\mathbb{R}^n$.

In the second order case ($m=1$), it is well known that the fundamental solution of the free Schr\"{o}dinger equation is given by
\begin{equation*}\label{eq1.1.2}
	e^{\i t \Delta}(x, y):=(4\pi \i t)^{-\frac{n}{2}}e^{\frac{\i|x-y|^2}{4t}},\qquad t\ne 0,\,\,x, y\in \mathbb{R}^n,
\end{equation*}
from which, we obtain immediately the $L^1-L^{\infty}$ dispersive estimate
\begin{equation*}
	\|e^{\i t \Delta}\|_{L^1-L^{\infty}}\lesssim |t|^{-\frac{n}{2}}.
\end{equation*}
The dispersive estimate for  $e^{-\i t H}$ (where $H=-\Delta+V(x)$ with $V\ne 0$) has attracted significant attention  in the past three decades, particularly due to their important applications in  nonlinear Schr\"odinger equations. A foundational contribution was made by Journ\'e-Soffer-Sogge \cite{JSS}, who obtained the $L^1 - L^\infty$ estimate in the regular case for dimensions $n \geq 3$. Subsequent advances for $n \leq 3$ were achieved by Weder \cite{We}, Rodnianski-Schlag \cite{RS}, Goldberg-Schlag \cite{Sch-G} and Schlag \cite{sch}. Notably, Yajima \cite{Ya, Ya-99} developed the wave operator method to derive $L^1 - L^\infty$ estimates for $-\Delta+V$. Further developments in this field can be found in \cite{BG,DF06,ES04,ES06,GW15,GW17,We99} and references therein. For comprehensive overviews of related developments, we refer to the survey papers \cite{Sch07,Sch21}.

Recent years have witnessed  considerable progress in dispersive estimates for $m>1$ in \eqref{equ1.1.1}.
Local decay estimates were studied by Feng-Soffer-Yao \cite{FSY} and  Feng-Soffer-Wu-Yao  \cite{FSWY}.
For the fourth order ($m =2$) Schr\"{o}dinger operators,  Erdo\v{g}an-Green-Toprak \cite{EGT} obtained (global) dispersive estimates when $n=3$, and the case of $n=4$ was established by  Green-Toprak \cite{GT19}. Later, Soffer-Wu-Yao \cite{SWY} and Li-Soffer-Yao \cite{lsy} proved dispersive bounds in 1-D and 2-D respectively.
Very recently, Erdo\v{g}an-Goldberg-Green \cite{EGG23-1} proved dispersive estimates for  scaling-critical potentials for $2m<n<4m$ and $m>1$.

On the other hand,   in higher dimensions $n>2m\ge4$, dispersive estimates follow from the work of  Erdo\v{g}an-Green \cite{EG22,EG23} on the $L^p$ boundedness of wave operators for $1\le p\le \infty$ assuming regular zero energy and  sufficient smoothness of the potential. 
However, in lower dimensions $n<2m$, the wave operators are generally not expected to be bounded on  $L^p$ for the endpoints $p=1, \infty$. 
For example, Goldberg-Green \cite{GG21}  established  $L^p$ boundedness for $1<p<\infty$ in the case $(m, n)=(2, 3)$ under the assumption that zero energy is regular, while  Mizutani-Wan-Yao \cite{MWY24} provided counterexamples showing the unboundedness at $p=1, \infty$. The case $n=2m=4$ was addressed by Galtbayar-Yajima \cite{GY}, where $L^p$ boundedness was proved for $1<p<p_0$ with certain $p_0$ depending on the type of singularity at zero energy. More recently,  Mizutani-Wan-Yao \cite{MWY} studied the case $(m, n)=(2, 1)$, showing that the wave operators are bounded for $ 1<p<\infty$, but not for  $p=1, \infty$. In the latter case, they established weak boundedness within the frameworks such as  $L^{1, \infty}$, $H^1$ and $\mbox{BMO}$,  depending on the type of threshold obstructions.

When examining fundamental solution, the higher order case  ($m>1$) differs due to its additional spatial decay, which is absent in the second order case.
For the polyharmonic operator  $(-\Delta)^m$, $m>1$,  the fundamental solution of $e^{-\i t (-\Delta)^m}$ exhibits the following  space-time pointwise decay (see e.g. \cite{BKS,Mi}):
\begin{equation}\label{eq1.1.3}
	|e^{-\i t (-\Delta)^m}(x, y)|\le C|t|^{-\frac{n}{2 m}}\left(1+|t|^{-\frac{1}{2 m}}|x-y|\right)^{-\frac{n(m-1)}{2 m-1}},\qquad t\ne 0,\,\,x, y\in \mathbb{R}^n.
\end{equation}
\eqref{eq1.1.3} clearly implies the dispersive estimate,  but the converse is not true. We refer to \cite{HHZ,KPV,Mi} for fundamental solution estimates of $e^{\i t P(D)}$  with general elliptic $P(D)$ of constant coefficients.

The pointwise estimate \eqref{eq1.1.3} for polyharmonic operators, combined with existing dispersive estimates for higher order Schr\"{o}dinger operators, naturally leads us to investigate the following problem:

\begin{quote}
	Does the space-time pointwise estimate \eqref{eq1.1.3} hold for general higher order Schr\"{o}dinger operators $H=(-\Delta)^m+V$?
\end{quote}
To our best knowledge, such pointwise estimates have not yet been established.

We aim at  establishing estimates for the
fundamental solution of \eqref{equ1.1.1} in all odd dimensions $n\ge 1$ and for all integers $m\geq1$.
However,  the main challenge in high dimensions ($n>4m$)  lies in the high energy part, while for dimensions $n<4m$, the key difficulties arise in the low  energy regime. Given the fundamentally distinct nature of these technical  challenges, the current paper only focuses on the low dimensional case, and the results for high dimensions are available  in the sequel to this work, see \cite{CHHZ}.

\subsection{Main results}\

Throughout the paper, we assume, unless stated otherwise,  $n$ is odd  and $1\le n<4m$.

To state our main result, we first give the definition of zero resonances.
Recall that zero is an eigenvalue of $H=(-\Delta)^{m}+V$ if there exists some nonzero  $\phi\in L^2(\R^n)$, such that
\begin{equation}\label{equ1.2}
	((-\Delta)^{m}+V)\phi=0,
\end{equation}
in distributional sense. In general, there may exist nontrivial resonant solutions of \eqref{equ1.2} in the weighted $L^2$ space $L_s^2(\R^n):=\{f; \,(1+\lvert \cdot \rvert)^sf\in L^2(\R^n)\}$ if $s<0$, but not in $L^2(\mathbb{R}^n)$. 
For example, consider $\phi = 1 + e^{-|x|^2}$ and define $V = -\frac{(-\Delta)^{m}\phi}{\phi}$, then \eqref{equ1.2}  holds. Such potential is smooth with exponential decay at $\infty$, however, $\phi\in L_{-\frac{n+1}{2}}^2(\R^n)\setminus L^2(\R^n)$. 
It tuns out that the existence of such   resonant solutions can potentially (though does not necessarily) influence the time decay of the propagator $e^{-\i tH}$. To address this systematically, we give a precise classification of different types of resonaces. Throughout the paper, we use the notation (see \cite{SWY,MWY})
\begin{equation}\label{equ1.3}
	W_s(\R^n)=\bigcap\limits_{\sigma<s}L_\sigma^2(\R^n).
\end{equation}
We also denote
\begin{equation}\label{equ0.1}
	m_n=\left\{
	\begin{array}{ll}
		m,\quad \,\qquad\,\quad \mbox{if}\,\,  1\le n\le 2m-1,\\[0.3cm]
		\mbox{$2m-\frac{n-1}{2}$},\,\,\,\,\,\mbox{if}\,\, 2m+1\le n\le 4m-1,
	\end{array}
	\right.
\end{equation}

We now present the following definition for classifying the spectrum of $H$ at zero energy.
\begin{definition}\label{def1.1}\
Let $V\in L^{\infty}(\mathbb{R}^n)$ be real-valued.

\noindent\emph{(\romannumeral1) } We say zero is a resonance of the $\k$\text{-}th kind $(1\le \mathbf{k}\le m_n)$, if \eqref{equ1.2} has a non-trivial solution in $W_{-\frac{1}{2}-m_n+\mathbf{k}}(\R^n)$, but has no non-trivial solution in $W_{\frac{1}{2}-m_n+\mathbf{k}}(\R^n)$.

\noindent\emph{(\romannumeral2) } We say zero is an eigenvalue of $H$, if \eqref{equ1.2} has a non-trivial solution in $L^2(\mathbb{R}^n)$. In this case, zero is also said to be a resonance of the $(m_n+1)$-th kind.
		
\noindent\emph{(\romannumeral3) } We say zero is a regular point ,  if $(i)$ and $(ii)$ are not satisfied. In this case, zero is also said to be a resonance of the $0$\text{-}th kind.
\end{definition}
	
The following remarks concerning the Definition \ref{def1.1} are in order. 
	
\begin{itemize}
\item [($\textbf{a}_1$)] In dimensions $n > 4m$, no resonance occurs at the zero energy in the sense that \eqref{equ1.2} has no solution in $L_s^2(\mathbb{R}^n)\setminus L^2(\mathbb{R}^n)$ for any $s<0$ if $V$ has sufficient decay (see \cite[Remark 2.13]{FSWY}). This parallels the behavior observed for the classical Schr\"odinger operators in dimensions $n>4$, where zero resonances are also absent.
		
		
\item  [($\textbf{a}_2$)]  For any real-valued $V\in L^{\infty}(\mathbb{R}^n)$, Definition \ref{def1.1} implies that zero must be a $\mathbf{k}$\text{-}th kind resonance  for some $0\le \mathbf{k}\le m_n+1$.
Note that in  \cite{EGG23-1},   a resonance is defined when $(1 + |x|)^{-\sigma}\phi \in L^2$ (with $\phi$ satisfying \eqref{equ1.2}) for some $\sigma > 2m - \frac{1}{2}$ when $n>2m$.  Our definition not only agrees with this characterization but also provides  a more detailed  classification of zero resonances. Furthermore, it is consistent with the frameworks in   \cite{SWY,MWY}, where dispersive estimates and $L^p$ boundedness of wave opeators are established under different types of threshold obstructions in the special case  $(m, n)=(2,1)$.

\item  [($\textbf{a}_3$)] For a given order of the Laplacian and  the dimension, the index $m_n$ represents the number of non-trivial zero resonance types. Its value differs between the regimes $n < 2m$ and $n > 2m$. The reason can be explained as follows: In dimensions $n > 2m$, if \eqref{equ1.2} holds in the distributional sense, we immediately obtain
	 $\phi=(-\Delta)^{-m}(V\phi)$.
It is well known that the following estimate (see e.g. \cite[Lemma 2.3]{J80})
\begin{equation}\label{eq-7-27-1}
    \|(-\Delta)^{-m}\|_{L^2_{s}-L^2_{-s'}}<\infty
\end{equation}
holds provided $s,s'>2m-\frac n2$ and $s+s'>2m$. Thus $\phi\in W_{\frac{n}{2}-2m}$ holds provided $V$ decays sufficiently fast at the infinity, which coincides with our choice of $W_{\frac{1}{2} - m_n }(\mathbb{R}^n)$ in Definition \ref{def1.1}.
While in the case $n < 2m$, the operator $(-\Delta)^{-m}$ cannot be properly defined through Riesz potential, the analysis here of its mapping properties requires the incorporation of additional vanishing conditions, leading to corresponding modifications for the associated indices. More specifically, Proposition \ref{pro:one to one corres of solutions} (see \eqref{eq-727-2}) establishes that $\phi\in W_{\frac{1}{2}-m}(\R^n)$, again consistent with our choice of $W_{\frac{1}{2} - m_n }(\mathbb{R}^n)$ in Definition \ref{def1.1}.

\item [($\textbf{a}_4$)] As demonstrated  in \cite{EGT} (for the case $(m, n)=(2,3)$) and \cite{SWY} (for the case $(m, n)=(2,1)$), the presence of resonances may (but does not always) affect the time-decay properties of the evolution operator $e^{-\i tH}$ if $n<2m$. The above classification scheme allows us to see explicitly how different types of zero resonances affect the time-decay, see Theorems \ref{thm1.1} and \ref{thm1.1-723}.
\end{itemize}

In order to explore how resonance type affects the study of the resolvents of $(-\Delta)^m+V$ and of the fundamental solution estimate, we need the following technical assumption on the potential.

	
\begin{assumption}\label{assum1} Let $V\in L^{\infty}(\mathbb{R}^n
)$ be  real-valued and zero be a  $\mathbf{k}$\text{-}th kind
resonance of  $H=(-\Delta)^m+V$ for some $0\le \mathbf{k}\le m_n+1$. We assume that
		
\noindent\emph{(\romannumeral1) }$H$ has no positive embedded eigenvalue.
		
\noindent\emph{(\romannumeral2) }$V$ satisfies 
\begin{equation}\label{eq:decay assump}
\lvert V(x)\rvert\lesssim \langle x \rangle^{-\beta-}, \quad  \beta=\max\left\{4m-n,\,n\right\}+4\mathbf{k}+4.
\end{equation}
\end{assumption}
		
	Some remarks concerning the above assumptions are as follows.
\begin{itemize}
\item  [($\textbf{b}_1$)] In the second order case, a classical result due to Kato (see \cite[Theorem  \uppercase\expandafter{\romannumeral13}.58]{RS79}) says that $-\Delta+V$ has no positive eigenvalue embedded into the absolutely continuous spectrum provided $V$ is bounded and decays faster than $|x|^{-1}$ at the infinity. 
 However, the higher order case exhibits subtle differences. For instance, when $m\ge 2$ is even, $H=(-\Delta)^m+V$ may have positive eigenvalues even if $V\in C^{\infty}_{0}$, see e.g. \cite{FSWY}. 

\item  [($\textbf{b}_2$)]  The decay assumption \eqref{eq:decay assump} is explicitly tied to the resonance kind, which is needed during the low energy analysis, particularly in the proof of  Theorem \ref{thm3.4}. While in the high energy part, we require weaker decay assumptions, as specified in  \eqref{eq-V-815-1}.
\end{itemize}

We present our main results by considering two distinct dimensional regimes: $1\le n<2m$ and $2m<n<4m$.
Let $P_{ac}(H)$ denote the projection onto the absolutely continuous spectrum space of $H=(-\Delta)^m+V$ and let $K(t,x,y)$ be the Schwartz kernel of $e^{-\i tH}P_{ac}(H)$.
\begin{theorem}\label{thm1.1}
Let $n$ be an odd integer in the range $1 \leq n < 2m$. Under Assumption~\ref{assum1}, $K(t,x,y)$ is an integral kernel when $t\neq0$, satisfying the following.

\noindent (i) If $0\le \k\le m-\frac{n-1}{2}$, then 
\begin{equation}\label{equ1.2.1-231}
|K(t, x,y)|\lesssim  |t|^{-\frac{n}{2 m}}\left(1+|t|^{-\frac{1}{2 m}}|x-y|\right)^{-\frac{n(m-1)}{2 m-1}}.
\end{equation}
\noindent (ii) If $m-\frac{n-1}{2}<\k\le m(=m_n)$, then
\begin{equation}\label{equ1.2.1-232}
|K(t, x,y)|\lesssim  (1+|t|)^{-\frac{2m+1-2\k}{2m}}(1+|t|^{-\frac{n}{2 m}})\left(1+|t|^{-\frac{1}{2 m}}|x-y|\right)^{-\frac{n(m-1)}{2 m-1}}.
\end{equation}
\noindent (iii) If zero is an eigenvalue, then \eqref{equ1.2.1-232} holds with $\k=m$.
\end{theorem}

\begin{theorem}\label{thm1.1-723}
Let $n$ be an odd integer in the range $2m<n < 4m$. Under Assumption~\ref{assum1}, $K(t,x,y)$ is an integral kernel when $t\neq0$, satisfying the following.

\noindent (i) If $\k=0$ (i.e. zero energy is regular), then 
\begin{equation}\label{equ1.2.1-233}
|K(t, x,y)|\lesssim  |t|^{-\frac{n}{2 m}}\left(1+|t|^{-\frac{1}{2 m}}|x-y|\right)^{-\frac{n(m-1)}{2 m-1}}.
\end{equation}
\noindent (ii) If $1\le \k\le 2m-\frac{n-1}{2}(=m_n)$, then
\begin{equation}\label{equ1.2.1-234}
|K(t, x,y)|\lesssim  (1+|t|)^{-\frac{4m-n+2-2\k}{2m}}(1+|t|^{-\frac{n}{2 m}})\left(1+|t|^{-\frac{1}{2 m}}|x-y|\right)^{-\frac{n(m-1)}{2 m-1}},
\end{equation}
\noindent (iii) If zero is an eigenvalue, then \eqref{equ1.2.1-234} holds with $\k=2m-\frac{n-1}{2}$.
\end{theorem}
We make the following remarks related to Theorems \ref{thm1.1} and \ref{thm1.1-723}.
\begin{itemize}
\item  [($\textbf{c}_1$)]  Theorems \ref{thm1.1} and \ref{thm1.1-723} imply immediately the following dispersive estimates
\begin{equation}\label{equ-disp-h}
\|e^{-\i tH}P_{ac}(H)\|_{L^1-L^{\infty}}\lesssim (1+|t|)^{-h(m, n, \mathbf{k})}(1+|t|^{-\frac{n}{2 m}}),\qquad\,\,t\ne 0,
\end{equation}
where 
\begin{equation}\label{equ4.1.3}
h(m, n, \mathbf{k}):=\begin{cases}
\frac{n}{2m},\quad&\mbox{if}\,\,0\le \mathbf{k}\le \mathbf{k}_c,\\
\frac{2m_n+1-2\mathbf{k}}{2m},&\mbox{if}\,\,  \mathbf{k}_c<\mathbf{k}\le m_n,\\
\frac{1}{2m},&\mbox{if}\,\,\mathbf{k}=m_n+1.
\end{cases}
\end{equation}
with $\k_c$ given by
\begin{equation}\label{equ3.3.1}
\mathbf{k}_c=\max\left\{\mbox{$m-\frac{n-1}{2},\,\,  0$}\right\}=
\begin{cases}
m-\frac{n-1}{2},\quad&\mbox{if}\,\,  1\le n\le 2m-1,\\[0.3cm]
0,&\mbox{if}\,\,  \,2m+1\le n\le 4m-1.
\end{cases}
\end{equation}
To our best knowledge, when $1\leq n<2m$ and $m>2$,
the dispersive estimate for higher order Schr\"{o}dinger equations has not been investigated before. Moreover, since the above pointwise bounds have additional  decay in $|x-y|$ when $m>1$, we can further establish $L^p-L^q$ type estimates for $e^{-\i tH}P_{ac}(H)$ where $p, q$ are not dual exponents in the higher order case, see Corollary \ref{cor-Lp}.

\item  [($\textbf{c}_2$)]   We shall outline of the idea of the proof in subsection \ref{section1.3}, and the main  challenge lies in deriving the asymptotic expansion of the perturbed resolvent near zero energy.
We mention that when $n>4m$, the  perturbed resolvent expansion   has been proved in \cite{FSWY}. When $n<4m$, the situation is much more complicated. For instance, when $m=2$ and $n=4$, Green-Toprak \cite{GT19} first obtained the expansion of the perturbed resolvent by an iteration scheme originating from Jensen-Nenciu  \cite{JN}.  Employing such techniques, the results for $n=3,2,1$ were also established in \cite{EGT,lsy,SWY} respectively. It seems that such an iteration scheme will become increasingly  complex when $m$ becomes larger. To avoid such an iteration scheme, we prove the expansion by introducing suitable orthogonal subspaces, and simplify the problem to the study of the inverse of a finite dimensional operator matrix, and in particular, the operator matrix has some specific global structure that allows us to handle in a way unified for  all odd $n<4m$.
\end{itemize}

Theorems \ref{thm1.1} and \ref{thm1.1-723} can be applied to obtain new  $L^p-L^q$ type estimates. More precisely, set
\begin{align}\label{eq3.7}
\square_{\mathrm{ABCD}}=\{\mbox{$(p, q)$}\in\mathbb{R}_+^2;\ \mbox{$({1\over p},{1\over q})$}\ {\rm lies\,in\,the\,closed\,quadrilateral\,ABCD}\}.
\end{align}
Here $A=(\frac12,\frac12)$, $B=(1,\frac{1}{\tau_m})$, $C=(1,0)$ and $D=(\frac{1}{\tau'_m},0)$, where $\tau_m=\frac{2m-1}{m-1}$, see Figure \ref{fig1}.
Moreover, we denote by $H^1$ the Hardy space on $\R^n$ and by BMO the space of functions with bounded mean oscillation on $\R^n$. Let $(p, q)\in \square_{\mathrm{ABCD}}$ and denote by
\begin{align}\label{equ3.6}
L^p_*-L^q_*=
\begin{cases}
L^1-L^{\tau_m,\infty}\, \text{or}\, H^1-L^{\tau_m},&\text{if},\,(p,q)=(1, \tau_m),\\[4pt]
L^{\tau'_m,1}-L^\infty\,  \text{or}\, L^{\tau_m'}-\text{BMO},&\text{if}\,(p,q)=(\tau'_m,\infty),\\[4pt]
L^p-L^q,&\text{otherwise}.
\end{cases}
\end{align}
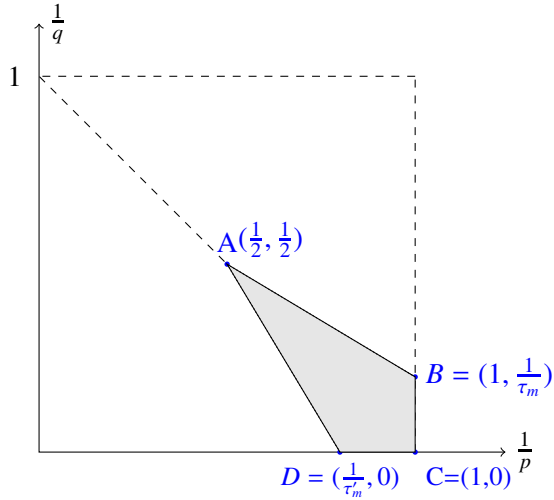
\begin{figure}[h]
    \centering
  \begin{tikzpicture}[scale=5.0]
    \draw [<->,] (0,1.14) node (yaxis) [right] {$\frac{1}{q}$}
        |- (1.24,0) node (xaxis) [right] {$\frac{1}{p}$};


\draw (0.80,0) coordinate (a_1)node[below]{\color{blue}{\small{$D=(\frac{1}{\tau_m'},0)$}}};
\draw[dashed](1,0) coordinate (a_3)node[below=3.5mm] [right=0.1mm] {\small{\color{blue}{C=(1,0)}}} -- (1,1) coordinate (a_4);
\draw[dashed](1,0) coordinate (a_3) -- (0,1) coordinate (a_5)node[left=0.9mm] {$1$};
\draw[dashed](1,1) coordinate (a_3) -- (0,1) coordinate (a_5);
\draw (1,0.2) coordinate (a_5)node[right]{\color{blue}{$B=(1,\frac{1}{\tau_m})$}} -- (0.56,0.444) coordinate (a_2);

\fill[blue]  (0.5,0.5)coordinate (a_6) circle (0.2pt) node [above=3.3mm] [right=0.1mm] {$(\frac12, \frac12)$} node[above=0.3mm]{{A}} ;
\fill[blue]  (0.80,0) circle (0.2pt);
  \fill[blue] (1,0) circle (0.2pt);
   \fill[blue]  (0.56,0.444) circle (0.2pt);
   \fill[blue]  (1,0.2) circle (0.2pt);

\draw[fill=gray!20]  (0.5,0.5)--(0.80,0) -- (1,0) -- (1,0.2) -- cycle;
\draw [densely dotted] (0.5,0.5) -- (1,0.2);
\draw [densely dotted] (0.5,0.5) -- (0.8,0);
\end{tikzpicture}
    \caption{The  $L^p-L^q$ estimates}\label{fig1}
    \end{figure}

\begin{corollary}\label{cor-Lp}
Under Assumption \ref{assum1} with the resonance kind $\k$ satisfying $0\le \k\le m-\frac{n-1}{2}$ when $1\le n<2m$, or $\k=0$ when $2m<n<4m$, we have the following.						
						
\noindent\emph{(\romannumeral1)} For all $(p, q)\in \square_{\mathrm{ABCD}}$ (see Figure \ref{fig1}), we have the  $L^p_*-L^q_*$ (defined by \eqref{equ3.6}) estimates 
\begin{equation}\label{equ-Lp-Lq}
\|e^{-\i tH}P_{ac}(H)\|_{L^p_*-L^q_*}\lesssim|t|^{-\frac{n}{2m}(\frac{1}{p}-\frac{1}{q})}.
\end{equation}
\noindent\emph{(\romannumeral2)} If $\alpha\in(0,n(m-1)]$ and redefining $\beta=\max\left\{4m,\,2n\right\}+4\mathbf{k}+4$ in \eqref{eq:decay assump}, then
\begin{equation}\label{equ-L1-smooth}
\|H^\frac{\alpha}{2m} e^{-\i tH}P_{ac}(H)\|_{L^1-L^{\infty}} \lesssim |t|^{-\frac{n+\alpha}{2m}}.
\end{equation}
\end{corollary}
If we take  $\alpha=n(m-1)$, the above result implies the smoothing dispersive bound
\begin{equation*}
\|H^\frac{n(m-1)}{2m}e^{-\i tH}P_{ac}(H)\|_{L^1-L^\infty}\lesssim|t|^{-\frac n2},
\end{equation*}
which can be seen as a variable coefficient variant of the smoothing effect obtained in Kenig-Vega-Ponce \cite{KPV} and  Erdo\v{g}an-Goldberg-Green \cite[Proposition 2.1]{EGG23} $($see also Huang-Huang-Zheng \cite[Theorem 3.1]{HHZ}$)$  for the free case $V\equiv0$.
It also generalizes the smoothing estimate in Erdo\v{g}an-Green \cite[Corollary 1.4]{EG23} where $n>2m$ and $\mathbf{k}=0$ (i.e. assuming zero to be regular), which was obtained by proving $L^p$ boundedness of the wave operators.

\subsection{Outline of the proof}\label{section1.3}\

We  outline the strategy for proving Theorem \ref{thm1.1} and Theorem \ref{thm1.1-723}.
Our starting point is the Stone's formula
\begin{equation}\label{equ0.2}
	\langle e^{-\i tH}P_{ac}(H)f,\, g\rangle=\frac{1}{2\pi \i}\int_0^{\infty}e^{-\i t\lambda}\langle(R^+(\lambda)-R^-(\lambda))f,g\rangle \d\lambda,\quad f, g\in C_0^{\infty}(\mathbb{R}^n),
\end{equation}
where 
\begin{equation}\label{eq:def of perturbed R^pm}
 R^\pm(\lambda):=(H-\lambda\mp \i0)^{-1}.   
\end{equation}
We divide the proof into low and high energy parts.
Let $\chi\in C_0^{\infty}(\mathbb{R})$ be a  cut-off function such that
\begin{equation}\label{equ4.cutoff}
			\chi(\lambda)=
			\begin{cases}
				1,\quad&|\lambda|\le (\frac{\lambda_0}{2})^\frac{1}{2m},\\
				0,&|\lambda|\ge \lambda_0^\frac{1}{2m},
			\end{cases}
		\end{equation}
for some $\lambda_0>0$,  and denote $\tilde{\chi}(\lambda)=1-\chi(\lambda)$. Using the spectral theorem and Stone's formula, (\romannumeral1) of Assumption \ref{assum1} allows us to split
\begin{align*}
 e^{-\i tH}P_{ac}(H)&=\frac{1}{2\pi i} \int_{0}^{+\infty} e^{-\i  t \lambda}\left(R^{+}(\lambda)-R^{-}(\lambda)\right) (\chi(\lambda)+\tilde{\chi}(\lambda))\, \d \lambda\\
  & :=e^{-\i tH}\chi(H)P_{ac}(H)+e^{-\i tH}\tilde{\chi}(H)P_{ac}(H)
\end{align*}
into low and high energy part. Therefore our main results Theorem \ref{thm1.1} and Theorem \ref{thm1.1-723} follow immediately from the two theorems below.

\begin{theorem}\label{theorem4.0}
	Under Assumption \ref{assum1}, if $\lambda_{0}\in (0, 1)$ is given by Theorem \ref{thm3.4}, then  $e^{-\i tH}\chi(H)P_{ac}(H)$ has integral kernel  $K_{L}(t,x,y)$ satisfying
\begin{equation}\label{equ4.1.2.0}
		\left|K_{L}(t, x, y)\right| \lesssim |t|^{-h(m, n, \mathbf{k})}(1+|t|^{-\frac{n}{2m}})\left(1+|t|^{-\frac{1}{2 m}}|x-y|\right)^{-\frac{n(m-1)}{2 m-1}},\quad t\neq0,\,\,x,y\in\mathbb{R}^n,
	\end{equation}
where $h(m, n, \mathbf{k})$ is given by \eqref{equ4.1.3}.
\end{theorem}

\begin{theorem}\label{theorem4.1}
	Under Assumption \ref{assum1}, $e^{-\i tH}\tilde{\chi}(H)P_{ac}(H)$ (for any fixed $\lambda_0>0$)  has integral kernel $K_{H}(t,x,y)$ satisfying
\begin{equation*}\label{equ4.1.2.11}
		\left|K_{H}(t, x, y)\right| \lesssim |t|^{-\frac{n}{2 m}}\left(1+|t|^{-\frac{1}{2 m}}|x-y|\right)^{-\frac{n(m-1)}{2 m-1}},\quad t\neq0,\,\,x,y\in\mathbb{R}^n.
	\end{equation*}
\end{theorem}

The proofs of Theorems~\ref{theorem4.0} and~\ref{theorem4.1} rely on a careful analysis of the perturbed spectral measure ($R^{+}(\lambda^{2m}) -R^{-}(\lambda^{2m}) $) combined with oscillatory integral estimates. The main technical challenge lies in  the low energy part $e^{-\mathrm{i}tH}\chi(H)P_{ac}(H)$, while the high energy analysis is comparatively routine. Hence, we restrict our exposition to an outline of Theorem \ref{theorem4.0} for $e^{\mathrm{i}tH}\chi(H)P_{ac}(H)$, proceeding in two steps.

\noindent{\bf Step 1. The asymptotic behaviors of $(M^\pm(\lambda))^{-1}$.}\

Our low energy spectral analysis of $H$ starts from the symmetric resolvent identity:
\begin{equation}\label{eq:sym resolven exp}
	R^{\pm}(\lambda^{2m}) = R_0^\pm(\lambda^{2m}) - R_0^\pm(\lambda^{2m})v(M^\pm(\lambda))^{-1}vR_0^\pm(\lambda^{2m}),\quad\lambda>0,
\end{equation}
where $R_0^\pm(\lambda^{2m})=((-\Delta)^m-\lambda^{2m}\mp\mathrm{i}0)^{-1}$ and
\begin{equation}\label{eq:def for M}
	M^{\pm}(\lambda) = U + vR_0^\pm(\lambda^{2m})v, \quad v(x) = |V(x)|^{1/2}, \quad U(x)= \operatorname{sgn} V(x),
\end{equation}
with $\operatorname{sgn} x = 1$ for $x \geq 0$ and $\operatorname{sgn} x = -1$ for $x < 0$. 
Since the properties of $R_0^\pm(\lambda^{2m})$ are well understood, our primary technical challenge reduces to deriving the asymptotic expansions of $(M^\pm(\lambda))^{-1}$ around zero energy. Theorem \ref{thm3.4} provides such expansions in the form:
\begin{equation}\label{equ3.48-1111}
	(M^\pm(\lambda))^{-1} = \sum_{i,j\in J_{\mathbf{k}}} \lambda^{2m-n-i-j} Q_i \big(M^\pm_{i,j} + \Gamma_{i,j}^\pm(\lambda)\big) Q_j, \quad 0<\lambda<\lambda_0,
\end{equation}
where $\{Q_j\}$ are certain projection operators (defined in  \eqref{equ3.52-2}) and $\Gamma_{i,j}^\pm(\lambda)$ are the higher order remainder terms.

\noindent\textbf{An illustrative case: $n<2m$ with zero eigenvalue.}

To demonstrate our approach, we give a overview of the case where $n < 2m$ and zero is an eigenvalue for an example. In this regime, $M^{\pm}(\lambda)$ admit the expansions
\begin{equation}\label{eq:example for m+1}
 \begin{aligned}
M^{\pm}(\lambda)=&\sum\limits_{j=0}^{ m-\frac{n+1}{2}}\lambda^{n-2m+2j}a_j^\pm vG_{2j}v+T_0 \\
&+\sum_{j=m-\frac{n-1}{2}}^{2m- \frac{n+1}{2}}\lambda^{n-2m+2j}a_j^\pm vG_{2j}v+ \lambda^{2m}b_1vG_{4m-n}v+vr_{4m-n+1}^\pm(\lambda)v,	
\end{aligned}   
\end{equation}
where all terms are $L^2$-bounded, as established in \eqref{equ3.52-2}.
Furthermore, the remainder terms $vr_{4m-n+1}^\pm(\lambda)v$ admit favorable $L^2$ bounds with respect to the parameter (see \eqref{equ3.53}).

When zero is the eigenvalue of $H$ (i.e., $\k = m_n+1=m+1$), we define a family of orthogonal projection operators $\{Q_j\}_{j \in J_{m+1}}$ in \eqref{eq-Q_j-odd}, where $J_{m+1}$ denotes the index set
\begin{equation*}
	J_{m+1} = \left\{\mbox{$0, 1, \ldots, m-\frac{n+1}{2}, m-\frac{n}{2}, m-\frac{n-1}{2}, \ldots, 2m-\frac{n+1}{2}, 2m-\frac{n}{2}$}\right\}.
\end{equation*}
These projections satisfy $\sum\limits_{j\in J_{m+1}}Q_j=I$ (see  Proposition \ref{prop3.1}). For  $\lambda>0$,  we introduce the isomorphism $B_{\lambda} := (\lambda^{-j}Q_j)_{j\in J_{m+1}} :$
\begin{equation*}
 \bigoplus_{j\in J_{m+1}} Q_jL^2 \ni (f_j)_{j\in J_{m+1}}\longmapsto  \sum_{j\in J_{m+1}} \lambda^{-j}Q_jf_j \in L^2,
\end{equation*}
while in the matrix form, we can write it and its adjoint $B_{\lambda}^*$ as 
$$
B_{\lambda}= \left(Q_0, \lambda^{-1}Q_1, \ldots, \lambda^{-2m+\frac{n}{2}}Q_{2m-\frac{n}{2}} \right), \qquad 
B_{\lambda}^*= \begin{pmatrix} Q_0 \\ \lambda^{-1}Q_1\\ \vdots \\ \lambda^{-2m+\frac{n}{2}}Q_{2m-\frac{n}{2}} \end{pmatrix}.
 $$
The isomorphism of $B_{\lambda}$ yields the following equivalence: 
 \begin{equation*}
     \mbox{ $M^{\pm}(\lambda)$ are invertible on $L^2$ }\iff \mbox{$B_{\lambda}^*M^{\pm}(\lambda)B_{\lambda}$ are invertible on $\bigoplus_{j\in J_{m+1}} Q_jL^2$},
 \end{equation*}
with the inversion formula
\begin{equation}\label{eq-inverse-A-to-M-0}
	\left(M^{\pm}(\lambda)\right)^{-1}=B_{\lambda} \left(B_{\lambda}^*M^{\pm}(\lambda)B_{\lambda}\right)^{-1}B_{\lambda}^*.
\end{equation}
The above equivalence enables us to transform  the original problem into studying the invertibility  of the operator block matrices $B_{\lambda}^*M^{\pm}(\lambda)B_{\lambda}$  and their asymptotic behaviors near $\lambda=0$.  More importantly, this framework capitalizes on the intrinsic  global structure of the block matrices, which  allows us to systematically analyze $(M^\pm(\lambda))^{-1}$ in a unified manner.  
More precisely, we decompose the matrices as
\begin{equation}\label{eq:decomposition-intro}
	\mathbb{A^\pm} = \lambda^{2m-n}B_{\lambda}^*M^{\pm}(\lambda)B_{\lambda} = D^{\pm} + (r_{i,j}^\pm(\lambda))_{i,j\in J_{m+1}},
\end{equation}
where $D^{\pm}$ represent the leading order terms that are independent of $\lambda$, and the remainder terms satisfy the improved $L^2$ estimates $\|r_{i,j}^\pm(\lambda)\|_{L^2-L^2}=O(\lambda^{1/2})$ for $\lambda \in (0,1)$ (see \eqref{equ3.65.55}).  The cancellation properties of the projections $Q_j$ induce a block structure in $D^{\pm}$ (see \eqref{eq:stru of D-0}):
\begin{equation}\label{eq:D-structure}
	D^\pm = \begin{pmatrix}
		D_{00}^{\pm} & 0 & D_{01}^{\pm} \\[0.3em]
		0 & Q_{m-\frac{n}{2}}T_0Q_{m-\frac{n}{2}} & 0 \\[0.3em]
		D_{10}^{\pm} & 0 & D_{11}^{\pm}
	\end{pmatrix}.
\end{equation}
The proof of the invertibility of $D^\pm$ on the direct sum space $\bigoplus_{j\in J_{m+1}} Q_jL^2$ is mainly based on \textbf{three key observations}:

\begin{itemize}

	\item[($O_1$)] Through the resolvent relation
$$R_0(-\lambda^{2m})(x-y)=R_0^{\pm}((e^{\frac{\pm i\pi}{2m}}\lambda)^{2m})(x-y),$$
where $R_0(-\lambda^{2m})=((-\Delta)^{m}+\lambda^{2m})^{-1}$, one has following identity
$$(-\i)^{i+j}(-1)^j e^{\frac{\pm \i\pi}{2m}(n-2m+i+j)}d_{i,j}^\pm :=d_{i,j},$$
where $d_{i,j}^\pm$ are elements of $D^\pm$, and the right hand side is sign independent. Thus, the invertibility of matrices $D^\pm$ are equivalent to that of another operator matrix $D=(d_{i,j})$ given in \eqref{eq-matrix-2} (i.e. $D^\pm=P^\pm DQ^\pm$ for some invertible operator matrices $P^\pm$ and $Q^\pm$):
\begin{equation*}
	D=\begin{pmatrix}
		\textbf{D}_{00} & 0 & D_{01}\\[0.2cm]
		0 & Q_{m-\frac{n}{2}}T_0Q_{m-\frac{n}{2}} & 0\\[0.2cm]
		D_{10} & 0 &\textbf{D}_{11}
	\end{pmatrix}.
\end{equation*}
Here, $D$ is exactly the leading order term of $\lambda^{2m-n}B_{\lambda}^*(U + vR_0(-\lambda^{2m})v)B_{\lambda}$, and the operator $Q_{m-\frac{n}{2}}T_0Q_{m-\frac{n}{2}}$ is invertible on $Q_{m-\frac{n}{2}}L^2$ by the Riesz-Schauder theory.

\item[($O_2$)] The block $\mathbf{D}_{00}$ is strictly positive on $\bigoplus_{j\in J'_{m+1}}Q_jL^2$,  corresponding to the Gram matrix of vectors 
\[
\left\{\frac{\xi^\alpha}{(2\pi)^{n/2}\alpha!(1+|\xi|^{2m})^{1/2}}\right\}_{|\alpha|\in J'_{m+1}},\quad J'_{m+1} = \{j\in J_{m+1};\, 0 \leq j \leq m-\tfrac{n+1}{2}\}.
\]
	
\item[($O_3$)] The block $\mathbf{D}_{11}$ is strictly negative on $\bigoplus_{j\in J''_{m+1}}Q_{j,1}L^2$ (where $Q_{j,1} \leq Q_j$ is defined in \eqref{eq-decom-for-Qj}), and $-\mathbf{D}_{11}$ corresponds to the Gram matrix of vectors 
\[
\left\{\frac{\xi^\alpha}{(2\pi)^{n/2}\alpha!(1+|\xi|^{2m})^{1/2}|\xi|^m}\right\}_{|\alpha|\in J''_{m+1}},\quad J''_{m+1} = \{j\in J_{m+1};\,m-\tfrac{n-1}{2} \leq j \leq 2m-\tfrac{n+1}{2}\}.
\]
\end{itemize}

Observations  $(O_2)$ and $(O_3)$ are established in Lemma~\ref{lem3.3}. Combining observations $(O_1)$, $(O_2)$ and $(O_3)$ with the abstract Feshbach formula immediately establishes the invertibility of $D$ and consequently of $D^{\pm}$. A straightforward application of the Neumann series expansion and the identity \eqref{eq-inverse-A-to-M-0} then yields all results stated in Theorem~\ref{thm3.4}.

\begin{remark}[\textbf{Comparison with previous approaches}]
We remark that previous studies have established partial results on the asymptotic expansions of $(M^\pm(\lambda))^{-1}$. The case $m=1$ can be found in e.g. \cite{Sch-G, JN}, the case $m=2$ and $1\le n\le 4$ was treated in \cite{EGT, GY, GT19, lsy, SWY}, while the regime $3m < n$ was addressed in  \cite{FSWY}. 
All these results rely on a common iterative framework due to the following abstract inversion formula.
\begin{lemma}[{\cite[Corollary 2.2.]{JN}}]\label{lem:iteration framework}
 Let $F \subset \mathbb{C}$ have zero as an accumulation point. Let $A(z)$, $z \in F$, be a family of bounded operators on the Hilbert space $\mathcal{H}$ of the form
\[
A(z) = A + z B(z),
\]
with $ B(z)$ uniformly bounded as $z \to 0 $. Suppose zero is an isolated point of the spectrum of $A$, and let $S$ be the corresponding Riesz projection. Then for sufficiently small $z \in F$ the operator $A_1(z): S\mathcal{H} \to S\mathcal{H}$ defined by
\[
\begin{aligned}
A_1(z) = \frac{1}{z} \bigl( S - S(A(z) + S)^{-1}S \bigr) 
= \sum_{j=0}^{\infty} (-1)^j z^j S \bigl[ B(z)(A + S)^{-1} \bigr]^{j+1} S
\end{aligned}
\]
is uniformly bounded as $z \to 0 $. The operator $A(z)$ has a bounded inverse in  $\mathcal{H}$ if and only if $A_1(z)$ has a bounded inverse in $S\mathcal{H}$, and in this case
\begin{equation}\label{eq:pull back}
 A(z)^{-1} = (A(z) + S)^{-1} + \frac{1}{z}(A(z) + S)^{-1} S A_1(z)^{-1} S (A(z) + S)^{-1}. 
 \end{equation}
\end{lemma}
However, the existing iterative framework based on Lemma \ref{lem:iteration framework} becomes computationally intractable for large $m$ when expanding $(M^\pm(\lambda))^{-1}$. To illustrate, consider the case $n\ll 2m$ and $\k=0$, and let $S_0L^2=\{v\}^{\perp}$ on $L^2$. It follows that $S_0$ is the Riesz projection of $a_0^{\pm}vG_0v$ at zero. 
Define operators
$$A^{\pm}(\lambda)=\lambda^{2m-n}M^{\pm}(\lambda), \quad  B_0^{\pm}(\lambda)= \lambda^{-2} \left(A^{\pm}(\lambda)-a_0^\pm vG_{0}v\right).$$
\begin{itemize}
\item \textbf{First iteration.} Applying \eqref{eq:example for m+1} and Lemma 1.10, we reduce the problem of inverting $A^{\pm}(\lambda)$ on $L^2$ to studying the invertibility of $A_1^{\pm}(\lambda)$ (restricted to the subspace $S_0L^2$):
\begin{align*}
A_1^{\pm}(\lambda)&=\lambda^{-2}\left(S_0-S_0(A_0^{\pm}(\lambda)+S_0)^{-1}S_0\right)
=\sum\limits_{j=0}^{\infty}(-1)^{j} S_0\bigl[  B_0^{\pm}(\lambda)(a_0^{\pm}vG_0v + S_0)^{-1} \bigr]^{j+1} S_0\\
&=a_1^\pm S_0vG_{1}S_0+ \lambda^{2}B_1^{\pm}(\lambda),
\end{align*}
where $B_1^{\pm}(\lambda)=\sum_{j=2}^{ m-\frac{n+1}{2}}\lambda^{2j-4} S_0B_{1,j}S_0+\cdots$ represents a combination of $L^2$-bounded operators derived from the expansion \eqref{eq:example for m+1}. The simplification in the last equality relies on the fact that $(a_0^\pm vG_{0}v+S_0)^{-1}S_0=S_0$. 

\item \textbf{Subsequent iteration.} We define $S_1L^2=\Ker(a_1^\pm S_0vG_{1}S_0)$. Applying Lemma \ref{lem:iteration framework} again, we reduce the problem of inverting $A_1^{\pm}(\lambda)$ on $S_0L^2$ to  studying the invertibility of $A_2^{\pm}(\lambda)$ (restricted to the subspace  $S_1L^2$):
\begin{align*}
A_2^{\pm}(\lambda)&=\lambda^{-2}\left(S_1-S_1(A_1^{\pm}(\lambda))^{-1}S_1\right)
=\sum\limits_{j=0}^{\infty}(-1)^{j} S_0\bigl[B^{\pm}_1(\lambda)(A_0^{\pm}(\lambda) + S)^{-1} \bigr]^{j+1} \\
&=S_1B^{\pm}_{1,2}S_1+ \lambda^{2}B_2^{\pm}(\lambda),
\end{align*}
where $B_2^{\pm}(\lambda)$ are $L^2$-bounded operators. Further iterations produce  $A_3^{\pm}(\lambda), A_4^{\pm}(\lambda), \ldots$.  
\end{itemize} 

Note  that  the number of required iterations  grows monotonically  with $m$. This leads to two computational challenges when using the iterative method:
(i) The complexity of determining each term in $A_{j}^{\pm}(\lambda)$ grows rapidly with successive iterations; (ii) Reconstructing $(M^{\pm}(\lambda))^{-1}$ via \eqref{eq:pull back} becomes computationally intractable.
\end{remark}
In contrast to the aforementioned iterative method, our matrix-based approach exploits the intrinsic global structure of the matrix operator corresponding to $M^{\pm}(\lambda)$, which avoids the complicated iterative framework and consequently leads to substantial computational simplification.

\noindent{\bf Step 2. Oscillatory integral estimates.}\

Using expansions \eqref{equ3.48-1111} of $(M^{\pm}(\lambda))^{-1}$, we write
\begin{equation*}
R_0^\pm(\lambda^{2m})v(M^\pm(\lambda))^{-1}vR_0^\pm(\lambda^{2m}) = \sum_{i,j\in J_{\k}} \lambda^{2m-n-i-j}R_0^\pm(\lambda^{2m})v\big(M_{i,j}^\pm + \Gamma_{i,j}^\pm(\lambda)\big)vR_0^\pm(\lambda^{2m}).
\end{equation*}

Proposition \ref{lemma4.2} examines the integral kernels of $Q_j v R_0^{\pm}(\lambda^{2m})$ and $Q_j v\left(R_0^{+}(\lambda^{2m})-R_0^{-}(\lambda^{2m})\right)$. This analysis enables us to express the kernel of
$$\Omega^{+, low}_{ r}-\Omega^{-, low}_{r}=e^{-\mathrm{i}tH}\chi(H)P_{ac}(H)-e^{-\mathrm{i}t(-\Delta)^{m}}\chi((-\Delta)^{m})$$
as a linear combination of oscillatory integrals of the following form
\begin{equation*}
\int_{0}^{1}\int_{0}^{1}\int_{0}^{\infty} e^{-\mathrm{i} t \lambda^{2m} + \mathrm{i} \lambda(s_1^p|y|\pm s_2^q|x|)} T^{\pm}(\lambda, x, y, s_1, s_2)\chi(\lambda^{2m}) \d\lambda \d s_1\d s_2,
\end{equation*}
where $p,q\in\{0,1\}$. Then we apply the oscillatory integral estimate provided in  Lemma \ref{lemmaA.2} to derive the low energy  estimate for the fundamental solution.

\vspace{1em}


\subsection{Organization and Notations}\

In  Section~\ref{section2},  we provide some expansions for the kernel of the free resolvents.
In Section~\ref{sect-3}, we investigate the asymptotic expansion of $(M^\pm(\lambda))^{-1}$ near zero energy for all resonance types.
In Section \ref{section-4}, we prove the   low energy estimate  Theorem \ref{theorem4.0} in a unified approach. In Section \ref{section4.2}, we prove  the high energy estimate Theorem \ref{theorem4.1}. In Section \ref{section4.4}, we prove Corollary \ref{cor-Lp}, where  the result for smoothing operators follows from a parallel argument as in Section \ref{section-4} and  \ref{section4.2} with a minor modification.

We  collect some common notations and conventions.
\begin{itemize}
	\item  $\mathbb{N}_+=\{1,2,\ldots\}$, $\mathbb{N}_0=\{0,1,2,\ldots\}$, and $\mathbb{Z}=\{0,\pm1, \pm2,\ldots\}$, $L^2=L^2(\R^n;\mathbb{C})$. 
	\item $[l]$ denotes the greatest integer at most $l$.  $a-$ (resp. $a+$) means $a-\epsilon$ (resp. $a+\epsilon$) for some $\epsilon>0$.
	\item $A\lesssim B$ means $A \leq  CB$, where $C>0$ is an absolute constant whose dependence will be specified whenever necessary, and the value of C may vary from line to line.
\end{itemize}

The following notations will also be frequently used.
\begin{itemize}
    \item $W_s(\R^n)$ (see \eqref{equ1.3});
    
	\item   $m_n$ (see \eqref{equ0.1}), $\mathbf{k}_c$ (see \eqref{equ3.3.1});
    
	\item $R_0(z)$ (see \eqref{equ2.1}), $R_0^{\pm}(\lambda^{2m})$ (see \eqref{equ:limiting operator}), $R^\pm(\lambda^2m)$ (see \eqref{eq:def of perturbed R^pm}), $I^{\pm}$ (see \eqref{equ2.1.1'});
	
	\item  $\mathfrak{S}^{b}_{K}(\Omega)$ (see \eqref{eq:def for symb class}),   $M^{\pm}(\lambda)$ (see \eqref{equ3.1});

	\item $J_{\k}$ (see \eqref{equ3.12}), $J_{\k}'$ and $J_{\k}''$ (see \eqref{equ3.12.11});
	\item  $S_j$ (see \eqref{eq-S_j-odd-1}, \eqref{eq-S_j-odd-2}), $Q_j$ (see \eqref{eq-Q_j-odd});
    \item $\delta(j)$ (see \eqref{CforQ_j-1}).
\end{itemize}

\section{Free resolvent expansions}\label{section2}

For $z\in \mathbb{C}\setminus[0,\infty)$, we set
\begin{equation}\label{equ2.1}
\text{$R_0(z)=((-\Delta)^m-z)^{-1}$ and $\mathfrak{R}_0(z)=(-\Delta-z)^{-1}$.}
\end{equation}
 It is known that $R_0(z)$ has the following expression (see \cite{EG22})
\begin{equation*}
R_0(z)=\frac{1}{mz^{1-\frac{1}{m}}}\sum_{k=0}^{m-1}e^{\i\frac{2\pi k}{m}}\mathfrak{R}_0(e^{\i\frac{2\pi k}{m}}z^{\frac{1}{m}}).
\end{equation*}
For $\lambda > 0$, the limiting absorption principle from \cite{Ag} guarantees that the weak* limits
\begin{equation}\label{equ:limiting operator}
R_0^{\pm}(\lambda^{2m}) := R_0(\lambda^{2m} \pm i0) = \text{w*-}\lim_{\epsilon \downarrow 0} R_0(\lambda^{2m} \pm i\epsilon)
\end{equation}
exist as bounded operators between appropriate weighted $L^2$ spaces. Applying a change of variables to \eqref{equ2.1} yields the decomposition:
\begin{equation}\label{equ2.1'}
	R_0^{\pm}(\lambda^{2m})= \frac{1}{m\lambda^{2m}} \sum_{k \in I^{\pm}} \lambda_k^2 \mathfrak{R}_0^{\pm}(\lambda_k^2),  \tag{2.1'}
\end{equation}
where the summation ranges and complex phases are given by
\begin{equation}\label{equ2.1.1'}
	I^+ = \{0,1,\ldots,m-1\}, \quad I^- = \{1,\ldots,m\}, \quad \lambda_k = \lambda e^{i\frac{\pi k}{m}}.
\end{equation}
The integral kernel of $\mathfrak{R}_0(z)$ has an expression as
$$\mathfrak{R}_0(z)(x-y)=\frac{\i}{4}\left(\frac{z^{\frac{1}{2}}}{2\pi\lvert x-y \rvert}\right)^{\frac{n}{2}-1}\mathcal{H}^{(1)}_{\frac{n}{2}-1}(z^{\frac12}|x-y|).$$
Here $\mathrm{Im}\,z^{\frac{1}{2}}\ge 0$ and $\mathcal{H}^{(1)}_{\frac{n}{2}-1}$ is the first Hankel function. When $n\ge 1$ is odd, it follows from explicit expression of the Hankel function (see \cite{GR02}) that
$$\mathfrak{R}_0(z)(x-y)=\frac{1}{(4\pi)^{\frac{n-1}{2}}}\sum^{\frac{n-3}{2}}_{j=\min\{0,\frac{n-3}{2}\}}{c_j(\i z^{\frac 12}|x-y|)^j},$$
where
\begin{equation}\label{equ1.4,cj}
	c_j=\begin{cases}
		-\frac12,\quad&\mbox{if}\,\, j=-1,\\
		\frac{(-2)^j(n-3-j)!}{j!(\frac{n-3}{2}-j)!},&\mbox{if}\,\, 0\le j\le \frac{n-3}{2}.
	\end{cases}
\end{equation}
Combining this with \eqref{equ2.1'}, we obtain that in old dimensions,
\begin{equation}\label{eq2.8}
\begin{aligned}
R_0^{\pm}(\lambda^{2m})(x-y)&=\frac{1}{(4\pi)^{\frac{n-1}{2}}m\lambda^{2m}|x-y|^{n-2}}\sum_{k\in I^{\pm}} {\lambda_k^2e^{ \i\lambda_k|x-y|}}\sum^{\frac{n-3}{2}}_{j=\min\{0,\frac{n-3}{2}\}}{c_j(\i\lambda_k|x-y|)^j} \\
&=\sum_{k\in I^{\pm}} {e^{ \i\lambda_k|x-y|}}\sum^{\frac{n-3}{2}}_{j=\min\{0,\frac{n-3} {2}\}} D_{j} \lambda_k^{j+2-2m} {|x-y|^{n-2-j}}.
\end{aligned}
\end{equation}

\begin{proposition}\label{pro2.1}
Let $\lambda>0$ and $\theta\in \mathbb{N}_0$. In  each old dimension $n$, we have
\begin{equation}\label{equ2.2.2}	
R_0^{\pm}(\lambda^{2m})(x-y)=\sum_{0\le j\le \frac{\theta-1}{2}}a_j^\pm\lambda^{n-2m+2j}|x-y|^{2j}+\sum_{0\le l<\frac{n-2m+\theta}{2m}}b_l\lambda^{2ml}|x-y|^{2m-n+2ml}+r_{\theta}^{\pm}(\lambda)(x-y),
	\end{equation}
where $a^{\pm}_j\in \mathbb{C}\setminus\R$ and $b_l\in \mathbb{R}\setminus\{0\}$ are defined in \eqref{eq2.10.11}, and
	\begin{equation}\label{equ2.3}
		\big|\partial_\lambda^l r_{\theta}^{\pm}(\lambda)(x-y)\big|\lesssim \lambda^{n-2m+\theta-l}|x-y|^{\theta},\quad l=0,\cdots,\mbox{$\theta+\frac{n-1}{2}$}.
	\end{equation}
\end{proposition}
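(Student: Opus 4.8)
\emph{Proof plan.} The plan is to prove Proposition~\ref{pro2.1} by a direct computation starting from the closed form \eqref{eq2.8} of the free resolvent kernel in odd dimensions, combining a Taylor expansion of the exponentials $e^{\i\lambda_k|x|}$ with the summation over the roots $\lambda_k=\lambda e^{\i\pi k/m}$, $k\in I^{\pm}$. Write $d=\tfrac{n-3}{2}$ and let $\mathcal J$ denote the index set of the Hankel sum in \eqref{eq2.8}, so $\mathcal J=\{0,\dots,d\}$ when $n\ge 3$ and $\mathcal J=\{-1\}$ when $n=1$.

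First I would expand, for each $k\in I^{\pm}$ and each $j\in\mathcal J$,
\[
e^{\i\lambda_k|x|}=\sum_{l=0}^{L_j-1}\frac{(\i\lambda_k|x|)^{l}}{l!}+\mathcal E_{L_j}(\lambda_k|x|),\qquad L_j:=n-2+\theta-j,
\]
where $\mathcal E_L(w)=e^{\i w}-\sum_{l<L}\tfrac{(\i w)^l}{l!}$, so that $\partial_w^{r}\mathcal E_L=\i^{r}\mathcal E_{L-r}$ for $r\le L$, and hence $|\partial_w^{r}\mathcal E_L(w)|\lesssim|w|^{L-r}$ for $|w|\le1$ and $\lesssim(1+|w|)^{\max(L-1-r,0)}$ for $|w|\ge1$. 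Inserting this into \eqref{eq2.8}, using $\lambda_k^{p}=\lambda^{p}e^{\i\pi kp/m}$ and grouping the monomials by $s:=j+l$, the Taylor-polynomial part of $R_0^{\pm}(\lambda^{2m})(x)$ takes the form
\[
\frac{(4\pi)^{-\frac{n-1}{2}}}{m\lambda^{2m}}\sum_{s\le n-3+\theta}\Bigl(\sum_{\substack{j+l=s,\ l\ge0\\ j\in\mathcal J}}\frac{c_j\,\i^{\,s}}{l!}\Bigr)\lambda^{s+2}|x|^{s+2-n}\sum_{k\in I^{\pm}}e^{\i\pi k(s+2)/m},
\]
while all $\mathcal E_{L_j}$-contributions go into $r_{\theta}^{\pm}(\lambda)(x)$. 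Then I would use the elementary fact that $\sum_{k\in I^{\pm}}e^{\i\pi kp/m}$ equals $m$ if $2m\mid p$, equals $0$ if $p$ is even with $2m\nmid p$, and is a nonzero constant if $p$ is odd, applied with $p=s+2$. For $s$ even with $2m\mid s+2$ one recovers the second-family terms $b_l\lambda^{2ml}|x|^{2m-n+2ml}$ of \eqref{equ2.2.2}, with $b_l$ real since $\i^{s}$ is then a real power of $\i$; for $s$ even with $2m\nmid s+2$ the term disappears; for $s$ odd with $s\ge n-2$ one recovers the first-family terms $a_j^{\pm}\lambda^{n-2m+2j}|x|^{2j}$ (complex, with $a_j^{-}=\overline{a_j^{+}}$). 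The admissible ranges of the surviving indices turn out to be exactly those in \eqref{equ2.2.2}, and matching constants yields the formulas \eqref{eq2.10.11}.

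The one point to rule out is the contribution of $s$ odd with $s<n-2$ (equivalently $1\le s\le 2d-1$): these monomials carry an even but strictly negative power of $|x|$ and belong to neither family, so they must cancel. Their coefficient is a fixed multiple of $\sum_{j=0}^{\min(d,s)}\tfrac{c_j}{(s-j)!}$, and writing $c_j=d!\binom{2d-j}{d}\tfrac{(-2)^j}{j!}$ from \eqref{equ1.4,cj} and performing the substitution $z=\tfrac{w}{1+w}$ in the contour-integral representation of $[z^d](\cdot)$ gives
\[
\sum_{j=0}^{\min(d,s)}\frac{c_j}{(s-j)!}=\frac{d!}{s!}\,[z^{d}]\frac{(1-2z)^{s}}{(1-z)^{d+1}}=\frac{d!}{s!}\,[w^{d}]\bigl((1-w)^{s}(1+w)^{2d-s}\bigr).
\]
For $s$ odd (so $2d-s\ge1$) the degree-$2d$ polynomial $Q(w)=(1-w)^{s}(1+w)^{2d-s}$ satisfies $w^{2d}Q(1/w)=(-1)^{s}Q(w)=-Q(w)$, so its middle coefficient $[w^{d}]Q$ vanishes; this is the required cancellation. (When $n=1$ the set $\mathcal J=\{-1\}$ is a single index and no cross cancellation arises.)

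Finally I would estimate $r_{\theta}^{\pm}(\lambda)(x)=\frac{(4\pi)^{-\frac{n-1}{2}}}{m\lambda^{2m}}\sum_{k\in I^{\pm}}\sum_{j\in\mathcal J}c_j\,\i^{\,j}\lambda_k^{2+j}|x|^{j+2-n}\mathcal E_{L_j}(\lambda_k|x|)$: applying Leibniz' rule, distributing $\partial_\lambda^{i}$ over $\lambda^{-2m}$, $\lambda_k^{2+j}$ and $\mathcal E_{L_j}(\lambda_k|x|)$, and using the derivative bounds on $\mathcal E_{L_j}$ separately on $\{\lambda|x|\le1\}$ and $\{\lambda|x|\ge1\}$, every term is $\lesssim\lambda^{n-2m+\theta-i}|x|^{\theta}$ times a nonnegative power of $\min\{1,\lambda|x|\}$ or $\min\{1,(\lambda|x|)^{-1}\}$; the choice $L_j=n-2+\theta-j$ is precisely what makes the $\lambda$ and $|x|$ exponents come out as $n-2m+\theta-i$ and $\theta$, and the stated range $0\le i\le\theta+\tfrac{n-1}{2}$ in \eqref{equ2.3} is exactly the range on which $i\le L_j$ for every $j\in\mathcal J$ (the extremal pair $j=d$, $i=\theta+\tfrac{n-1}{2}$ being a boundary case with no slack). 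I expect the main obstacle to lie in exactly this bookkeeping --- the algebraic cancellation of the odd $s<n-2$ terms and the tightness of the remainder estimate at the top-order derivatives. A more conceptual way to see the cancellation, worth recording as a remark, is that in odd dimensions $\mathfrak{R}_0^{\pm}(\mu^{2})(x)$ is, for fixed $x\neq0$, an even real-analytic function of $\mu$ plus $\mu^{\,n-2}$ times an even real-analytic function of $\mu$ --- the indicial exponents $0$ and $2-n$ of $-u''-\tfrac{n-1}{r}u'-u=0$ differing by the odd integer $n-2$, so no logarithm appears --- and inserting this into \eqref{equ2.1'} and summing over $I^{\pm}$ reproduces \eqref{equ2.2.2} directly.
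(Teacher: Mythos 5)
Your proof follows essentially the same route as the paper's: expand $e^{\i\lambda_k|x|}$ to order $n+\theta-3-j$, plug into \eqref{eq2.8}, regroup by total degree $s=j+l$, and invoke two cancellations --- the root-of-unity sum $\sum_{k\in I^{\pm}}e^{\i\pi k(s+2)/m}$ vanishing for even $s+2$ with $2m\nmid(s+2)$, and the combinatorial sum $\sum_{j}c_j/(s-j)!$ vanishing for odd $s$ in $1\le s\le n-4$ --- to isolate exactly the two monomial families in \eqref{equ2.2.2}, with the remainder handled by Leibniz exactly as in the paper's \eqref{eq2.10}--\eqref{equ2.3}. The one place you genuinely do more than the paper is the second cancellation: where the paper simply cites \cite[Lemma 3.3]{J80} for $d_l=0$ when $l$ is odd in $1\le l\le n-4$, you supply a self-contained computation, rewriting the sum as $\tfrac{d!}{s!}[w^d]\bigl((1-w)^s(1+w)^{2d-s}\bigr)$ via the substitution $z=w/(1+w)$ and then observing that the middle coefficient vanishes for odd $s\le 2d-1$ because $w^{2d}Q(1/w)=-Q(w)$; this is correct and a clean elementary replacement for the citation, and your closing remark about the indicial exponents $0$ and $2-n$ differing by an odd integer in odd dimensions is a legitimate conceptual way to see the same cancellation without computation. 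Your observation that the derivative range $l\le\theta+\tfrac{n-1}{2}$ in \eqref{equ2.3} is forced by $l\le L_j$ at the extremal index $j=\tfrac{n-3}{2}$ is also a correct and useful bit of bookkeeping that the paper leaves implicit.
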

\begin{proof}
For each $j\in\{\min\{0,\frac{n-3}{2}\},\cdots,\frac{n-3}{2}\}$ in \eqref{eq2.8}, we apply the Taylor formula for $e^{ \i\lambda_k|x|}$ (of order $n-j+\theta-3$) to get
\begin{equation*}\label{equ2.3.000}
  (\i\lambda_k|x|)^je^{\i \lambda_k|x|}=\sum_{l=j}^{n+\theta-3}\frac{(\i\lambda_k|x|)^l}{(l-j)!}+\frac{(\i\lambda_k|x|)^{n+\theta-2}}{(n-j+\theta-2)!} \int_{0}^{1} e^{\i s \lambda_{k}|x|}(1-s)^{n-j+\theta-3} \d s.
\end{equation*}
Plugging this into \eqref{eq2.8}, we have
\begin{equation}\label{eq:exp to infinity}
R^{\pm}_0(\lambda_k^{2m})(x-y)=\frac{1}{(4\pi)^{\frac{n-1}{2}}m} \sum_{l=0}^{n+\theta-3} d_l\sum_{k\in I^{\pm}}\lambda_k^{l+2-2m}|x-y|^{l-n+2}+
r_{\theta}^{\pm}(\lambda)(x-y),
\end{equation}
where $d_l:=\sum^{\min\{l,\, \frac{n-3}{2}\}}_{j=\min\{0,\frac{n-3}{2}\}}{\frac{c_j}{(l-j)!}},$
$c_j$ is given in \eqref{equ1.4,cj}, and
\begin{align}\label{eq2.10}
r_{\theta}^{\pm}(\lambda)(x)= \sum_{j=\min\{0,\frac{n-3}{2}\}}^{\frac{n-3}{2}}\sum_{k\in I^{\pm}} C_{j,\theta}  \lambda_{k}^{n-2 m+\theta}
|x|^{\theta } \int_{0}^{1} e^{\i s \lambda_{k}|x|}(1-s)^{n-j+\theta-3} \d s.
\end{align}
Denoted by
\begin{equation}\label{eq2.10.11}
 a_j^{\pm}=\frac{d_{2j+n-2}}{(4\pi)^{\frac{n-1}{2}}m}\sum_{k\in I^{\pm}}e^{\frac{k\pi\i}{m}(2j+n-2m)},\quad \,\, b_l=(4\pi)^{-\frac{n-1}{2}} d_{2ml+2m-2},
\end{equation}
then \eqref{equ2.2.2} follows by	 using the property
\begin{equation*}\label{eq2.8.1}
  \sum_{k=0}^{m-1}{\lambda_k^{2j}}= \sum_{k=1}^{m}{\lambda_k^{2j}}=0,\qquad \text{when}\,\,\, j\in \mathbb{Z}\setminus m\mathbb{Z},
\end{equation*}
and the fact that $d_l=0$ when $l$ is odd and $1\le l\le n-4$ (see \cite[Lemma 3.3]{J80}).

We next prove \eqref{equ2.3}. We divide the proof into two cases.

\noindent (i) When $\lambda|x-y|\le 1$, by \eqref{eq2.10}, a direct computation shows that 
\begin{align*}
\left|\partial_{\lambda}^l r_{\theta}^{\pm}(\lambda)(x-y)\right| \les \sum_{j=0}^{l}\lambda^{n-2 m+\theta-j}|x-y|^{\theta+l-j}\les \lambda^{n-2m+\theta-l}|x-y|^{\theta},
\end{align*} 
holds for $l=0,\cdots,\theta+\frac{n-1}{2}$. 

\noindent (ii) When $\lambda|x-y|> 1$, by the definition, one has 
$$r_{\theta}^{\pm}(\lambda)(x-y)=R_0^{\pm}(\lambda^{2m})(x-y)-\sum_{0\le j\le \frac{\theta-1}{2}}a_j^\pm\lambda^{n-2m+2j}|x-y|^{2j}-\sum_{0\le l<\frac{n-2m+\theta}{2m}}b_l\lambda^{2ml}|x-y|^{2m-n+2ml}. $$
Moreover, we have
\begin{align*}
	\left|\partial_{\lambda}^lR_0^{\pm}(\lambda^{2m})(x-y)\right| 
	\les& \sum^{\frac{n-3}{2}}_{j=\min\{0,\frac{n-3}{2}\}}\sum_{s=0}^{l} \lambda^{j+2-2m-s}|x-y|^{l-s+j-n+2} \\
	\les& \sum^{\frac{n-3}{2}}_{j=\min\{0,\frac{n-3}{2}\}}\sum_{s=0}^{l} \lambda^{j+2-2m-s}|x-y|^{l-s+j-n+2}(\lambda|x-y|)^{\theta-(l-s+j-n-2)} \\
	\les&  \lambda^{n-2m+\theta-l}|x-y|^{\theta},
\end{align*} 
for $l=0,\cdots,\theta+\frac{n-1}{2}$, where we use the fact that $\theta-(l-s+j-n-2)\ge 0$. Similarly, the sum 
\[
\sum_{0 \leq j \leq \frac{\theta-1}{2}} a_j^{\pm} \lambda^{n-2m+2j} |x-y|^{2j} - \sum_{0 \leq l < \frac{n-2m+\theta}{2m}} b_l \lambda^{2ml} |x-y|^{2m-n+2ml}
\] 
satisfies the same estimate if $\lambda|x-y|> 1$. This completes the proof of \eqref{equ2.3}.
\end{proof}

\begin{remark}
For each $\min\{0,\frac{n-3}{2}\}\le l\le \frac{n-3}{2}$, if we apply the Taylor formula for $e^{ \i\lambda_k|x|}$ of order $\frac{n-5}{2}-l$
then by the same arguments above  we can obtain  expansions which have better decay on $x-y$:
\begin{equation}\label{equ4.17}
	\begin{split}
		R_{0}^{\pm}(\lambda^{2 m})(x-y)=&\sum_{k\in I^{\pm}}\lambda_k^{\frac{n+1}{2}-2m}|x-y|^{-\frac{n-1}{2}} \left(\sum_{l=0}^{\frac{n-5}{2}} C_{l,-\frac{n-1}{2}} \int_{0}^{1} e^{\i s \lambda_{k}|x-y|}(1-s)^{\frac{n-5}{2}-l}\d s\right) \\
		&+\sum_{k\in I^{\pm}}D_{\frac{n-1}{2}}\lambda_k^{\frac{n+1}{2}-2m}|x-y|^{-\frac{n-1}{2}}e^{\i\lambda_{k}|x|}.
	\end{split}
\end{equation}
Here $C_{l,-\frac{n-1}{2}}$ and $D_{l}$ are absolute constants.
\end{remark}

We also need the following expression of $R_{0}(-1)(x-y)$.
\begin{lemma}\label{lemma2.2}
We have
\begin{equation}\label{equ3.34}
R_0(-1)(x-y)=\sum\limits_{|\alpha+\beta|\le 4m-n-1}(-1)^{|\beta|} A_{\alpha,\beta}x^\alpha y^\beta+b_0|x-y|^{2m-n}+b_1|x-y|^{4m-n}+r_{4m-n+1}(1)(x-y),
\end{equation}
where the remainder $r_{4m-n+1}$ satisfies the same estimate in \eqref{equ2.3} with $\theta=4m-n+1$, and
\begin{equation}\label{equ2.12.000}
 \ A_{\alpha,\beta}= \left\{
	\begin{array}{ll}
		\frac{\i^{|\alpha|+|\beta|}}{(2\pi)^n\alpha!\beta!}\int_{\R^n}\frac{\xi^{\alpha+\beta}}{1+|\xi|^{2m}}\,\d\xi,\qquad \,\qquad\,\,\,\,\mbox{if}\,\,\, 0\le |\alpha|, |\beta|<\mathbf{k}_c=\max\{m-\frac{n-1}{2},0\},\\[0.5cm]
		\frac{\i^{|\alpha|+|\beta|}}{(2\pi)^n\alpha!\beta!}\int_{\R^n}\frac{-\xi^{\alpha+\beta}}{(1+|\xi|^{2m})|\xi|^{2m}}\,\d\xi,\qquad\, \,\,\mbox{if}\,\, \max\{m-\frac{n-1}{2},0\}=\mathbf{k}_c \le |\alpha|, |\beta|\le 2m-\frac{n+1}{2}.
	\end{array}
	\right.
\end{equation}
\end{lemma}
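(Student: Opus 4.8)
The plan is to compute the kernel from the Fourier representation $R_0(-1)(x-y)=\big((-\Delta)^m+1\big)^{-1}(x-y)=(2\pi)^{-n}\int_{\R^n}e^{\i(x-y)\cdot\xi}(1+|\xi|^{2m})^{-1}\,\d\xi$, isolating near $x=y$ an analytic (polynomial) part, the two leading homogeneous singularities $|x-y|^{2m-n}$ and $|x-y|^{4m-n}$, and a controlled remainder, and then matching the polynomial coefficients with the integrals in \eqref{equ2.12.000}. The \emph{shape} of the expansion follows at once from Proposition \ref{pro2.1}: by the branch bookkeeping behind \eqref{equ2.1}--\eqref{eq2.8}, $R_0(-1)(x)$ is exactly the right-hand side of \eqref{eq2.8} evaluated at the complex value $\lambda=e^{\i\pi/(2m)}$ (so $\lambda^{2m}=-1$) with the index set $I^+$; equivalently, one re-runs the proof of Proposition \ref{pro2.1} with each $\lambda_k$ replaced by $e^{\i\pi(2k+1)/(2m)}$, $k\in I^+$, all of which have strictly positive imaginary part, so that $e^{\i\lambda_k|x|}$ stays bounded and Taylor's formula with integral remainder applies verbatim. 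Taking the Taylor order so that $\theta=4m-n+1$, the constraints $0\le l<\frac{n-2m+\theta}{2m}=\frac{2m+1}{2m}$ and $0\le j\le\frac{\theta-1}{2}=\frac{4m-n}{2}$ force $l\in\{0,1\}$ and $2j\le 4m-n-1$ (since $4m-n$ is odd), and the identities $\sum_k\lambda_k^{2j}=0$ for $j\notin m\mathbb{Z}$ and $d_l=0$ for odd $l$ are used exactly as before. This yields $R_0(-1)(x-y)=P(x-y)+\sum_{l=0}^{1}b_l|x-y|^{2m-n+2ml}+r_{4m-n+1}(1)(x-y)$ with $P$ a polynomial in $x-y$ of degree $\le 4m-n-1$ containing only even-degree terms, the $b_l$ those of \eqref{eq2.10.11} (the factor $\lambda^{2ml}=(-1)^l$ being absorbed), and $r_{4m-n+1}$ obeying \eqref{equ2.3} with $\theta=4m-n+1$ (the power $|\lambda|^{n-2m+\theta-l}$ equalling $1$ as $|\lambda|=1$).

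It remains to identify the coefficients of $P$ with the $(-1)^{|\beta|}A_{\alpha,\beta}$ of \eqref{equ2.12.000}. Writing $P(x-y)=\sum_{|\gamma|\le 4m-n-1}G_\gamma(x-y)^\gamma$ and expanding $(x-y)^\gamma=\sum_{\alpha+\beta=\gamma}\binom{\gamma}{\alpha}(-1)^{|\beta|}x^\alpha y^\beta$, the claim reduces to $G_\gamma=\frac{\i^{|\gamma|}}{(2\pi)^n\gamma!}I_\gamma$, where $I_\gamma$ is the integral in \eqref{equ2.12.000}. Since $P$ is the Taylor polynomial at the origin of $g(z):=R_0(-1)(z)-\sum_{l=0}^1 b_l|z|^{2m-n+2ml}$ (the singular terms and $r_{4m-n+1}$ contributing nothing to $\partial_z^\gamma(\cdot)(0)$ for $|\gamma|\le 4m-n-1$, by \eqref{equ2.3}), one has $G_\gamma=\partial_z^\gamma g(0)/\gamma!$. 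For $|\gamma|\le 2m-n-1$ (nonempty only when $n\le 2m-1$) the integral $\int_{\R^n}\xi^\gamma(1+|\xi|^{2m})^{-1}\,\d\xi$ converges absolutely and differentiating the Fourier representation under the integral sign gives the first line of \eqref{equ2.12.000}. For $2m-n+1\le|\gamma|\le 4m-n-1$ that integral diverges at $|\xi|=\infty$; here one invokes the identity $(1+|\xi|^{2m})^{-1}=|\xi|^{-2m}-\big((1+|\xi|^{2m})|\xi|^{2m}\big)^{-1}$, whereby the $|\xi|^{-2m}$ piece Fourier-transforms (cleanly, since $2m-n$ is odd, so no logarithmic or $\delta$-corrections) to a homogeneous function absorbed by the $b_l$-terms, while against $-\big((1+|\xi|^{2m})|\xi|^{2m}\big)^{-1}$ the integral $\int_{\R^n}\frac{-\xi^\gamma}{(1+|\xi|^{2m})|\xi|^{2m}}\,\d\xi$ converges precisely for $2m-n<|\gamma|<4m-n$, giving the second line of \eqref{equ2.12.000}. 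The parity fact $I_\gamma=0$ whenever some component of $\gamma$ is odd shows both that only even degrees survive and that the odd borderline $|\gamma|=2m-n$ never contributes, so the two regimes exhaust all surviving monomials; translating the ranges of $|\gamma|=|\alpha+\beta|$ into bounds on $|\alpha|,|\beta|$ via $\tilde m_n$ (see \eqref{equ3.3.1}) gives \eqref{equ2.12.000}.

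The main obstacle is to make the previous paragraph rigorous when $n\le 2m$, where $(1+|\xi|^{2m})^{-1}$ decays too slowly for these manipulations to be literal: one must interpret the divergent Fourier integrals as Hadamard finite parts and verify that $(1+|\xi|^{2m})^{-1}=|\xi|^{-2m}-\big((1+|\xi|^{2m})|\xi|^{2m}\big)^{-1}$ holds as an identity of tempered distributions with consistently chosen regularizations at $\xi=0$, so that one may pass to Fourier transforms term by term. This is handled by the standard device of inserting a complex regularizing power $|\xi|^{-s}$, working in a strip of $s$ where all integrals converge, and continuing meromorphically to $s=0$, the residues accounting exactly for the $b_0,b_1$ terms. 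The remaining verifications --- matching $b_0,b_1$ with \eqref{eq2.10.11}, the remainder bound \eqref{equ2.3}, and the parity vanishing --- are routine.
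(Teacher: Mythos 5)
Your proposal follows essentially the same two-step route as the paper: first obtain the structural expansion \eqref{equ3.34} by re-running the Taylor expansion argument of Proposition \ref{pro2.1} at $\lambda=1$ with the rotated phases $\lambda_k e^{\pm\pi\i/m}$, with $\theta=4m-n+1$ forcing $l\in\{0,1\}$ and the polynomial degrees $\le4m-n-1$; second, identify the polynomial coefficients by Fourier inversion, directly when $\xi^{\alpha+\beta}/(1+|\xi|^{2m})$ is integrable, and otherwise through the splitting $\frac{1}{1+|\xi|^{2m}}=\frac{1}{|\xi|^{2m}}-\frac{1}{(1+|\xi|^{2m})|\xi|^{2m}}$. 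This is exactly the paper's strategy; the conclusion and all index bookkeeping agree.

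The one place where you diverge is the technical device used to justify the distributional subtraction. You propose Hadamard finite parts and meromorphic continuation in an auxiliary power $|\xi|^{-s}$, which works, but is more machinery than needed. The paper gets the same conclusion more directly: apply $\partial_x^\alpha\partial_y^\beta$ \emph{before} comparing, so that both $\xi^{\alpha+\beta}/(1+|\xi|^{2m})$ and $\xi^{\alpha+\beta}/|\xi|^{2m}$ become locally integrable near $\xi=0$ (this is exactly the condition $|\alpha|+|\beta|-2m>-n$, ensured by $|\alpha|,|\beta|\ge\tilde m_n$), characterize $b_0|\cdot|^{2m-n}$ as the fundamental solution of $(-\Delta)^m$ so that $\mathcal F\bigl(\partial^{\alpha+\beta}(b_0|\cdot|^{2m-n})\bigr)=\i^{|\alpha|+|\beta|}\xi^{\alpha+\beta}/|\xi|^{2m}$ as a tempered distribution, and then control the large-$\xi$ tail with a Gaussian $e^{-\epsilon|\xi|^2}$ cutoff before sending $\epsilon\to0$ and $x,y\to0$ against the now-$L^1$ difference. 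No finite-part interpretation is required. Also, a small slip in your discussion: you write that the obstacle is ``$n\le2m$, where $(1+|\xi|^{2m})^{-1}$ decays too slowly,'' but $(1+|\xi|^{2m})^{-1}$ is integrable precisely when $n<2m$; the real source of divergence is the extra growth from $\xi^{\alpha+\beta}$ when $|\alpha+\beta|\ge2m-n+1$, which is present for all $n$ in the relevant range. Your conclusion (the integral with $\xi^\gamma/\bigl((1+|\xi|^{2m})|\xi|^{2m}\bigr)$ converges for $2m-n<|\gamma|<4m-n$) is nevertheless correct and is what is actually used.
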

\begin{proof}
By \eqref{equ2.1}, it follows that for $\lambda>0$,
\begin{equation*}
  R_0(-\lambda^{2m})(x)=\frac{1}{-m\lambda^{2m}}\sum_{k\in I^{\pm}}(e^{\frac{\pm\pi\i}{m}}\lambda^2_k) \mathfrak{R}_0^{\pm}\left(e^{\frac{\pm\pi\i}{m}}\lambda_k^2\right)(x).
\end{equation*}
Following the proof of Proposition \ref{pro2.1} and choosing $\theta=4m-n+1$ in \eqref{equ2.2.2}, we have
\begin{equation}\label{equ2.2}	
R_0(-\lambda^{2m})(x-y)=\sum_{j=0}^{2m-\frac{n+1}{2}}a_j \lambda^{n-2m+2j}|x-y|^{2j}+b_0|x-y|^{2m-n}+b_1\lambda^{2m}|x-y|^{4m-n}+r_{4m-n+1}(\lambda)(x-y).
	\end{equation}
where
\begin{equation}\label{equ2.7-00}
	a_j=\frac{d_{2j+n-2}}{(4\pi)^{\frac{n-1}{2}}m}e^{\frac{\pm\pi \i}{2m}(2j+n-2m)} \sum_{k\in I^{\pm}}e^{\frac{k\pi\i}{m}(2j+n-2m)}=e^{\frac{\pm\pi \i}{2m}(2j+n-2m)} a_j^\pm,
\end{equation}
and the remainder term $r_{4m-n+1}$ can be expressed as
\begin{align*}
	r_{4m-n+1}(\lambda)(x-y)=& e^{\frac{(2m+1)\pi \i}{2m}} \lambda^{2m+1}\sum_{j=\min\{0,\frac{n-3}{2}\}}^{\frac{n-3}{2}}\sum_{k\in I^{+}} C_{j,4m-n+1}e^{\frac{(2m+1)k\pi \i}{m}}
	|x-y|^{4m-n+1} \\ 
	&\times \int_{0}^{1} \exp(\i s \lambda e^{\frac{(2k+1)\pi \i}{2m}}|x-y|)(1-s)^{4m-j-2} \d s.
\end{align*} 
Hence \eqref{equ3.34} follows immediately by the following identity
\begin{equation}\label{eq:exp for |x-y|}
	|x-y|^{2j}=\sum_{|\alpha|+|\beta|=2j}C_{\alpha,\beta}(-1)^{|\beta|}x^{\alpha}y^{\beta},
\end{equation}
and setting
\begin{equation}\label{equp3.18.alpha}
A_{\alpha,\beta}=a_{\frac{|\alpha|+|\beta|}{2}}C_{\alpha,\beta}.
\end{equation}

Now  we prove \eqref{equ2.12.000}.
Note that when $0\le |\alpha|, |\beta|<\mathbf{k}_c=\max\{m-\frac{n-1}{2},0\}$, it suffices to consider  $1\le n<2m$ since
$\mathbf{k}_c=0$ if $n>2m$.
When $1\le n< 2m$,  by \eqref{equ3.34}, we deduce that
\begin{equation*}\label{equ3.35}\begin{split}
(-1)^{|\beta|} A_{\alpha,\beta}\alpha!\beta!
	&=\lim\limits_{x,y\to0}\partial_x^\alpha\partial_y^\beta R_0(-1)(x-y)
	\\&=\lim\limits_{x,y\to0}\frac{1}{(2\pi)^n}\partial_x^\alpha\partial_y^\beta\left(\int_{\R^n}\frac{e^{\i(x-y)\cdot\xi}}{|\xi|^{2m}+1}\,\d\xi\right)
	\\&=\frac{\i^{|\alpha|+|\beta|}(-1)^{|\beta|}}{(2\pi)^n}\int_{\R^n}\frac{\xi^{\alpha+\beta}}{|\xi|^{2m}+1}\, \d\xi,
	\end{split}
\end{equation*}
where the last two equalities follow from the fact that $\xi^{\alpha+\beta}/{(|\xi|^{2m}+1)}\in L^1$ and the dominated convergence theorem,   since $0\le |\alpha|, |\beta|<\mathbf{k}_c=m-\frac{n-1}{2}$.

On the other hand, if $\mathbf{k}_c \le |\alpha|, |\beta|\le 2m-\frac{n+1}{2}$, it follows from  \eqref{equ3.34} that
\begin{equation}\label{equ3.37}
(-1)^{|\beta|} A_{\alpha,\beta}\alpha!\beta!
	=\lim\limits_{x,y\to0}\partial_x^\alpha\partial_y^\beta \big(R_0(-1)(x-y)-b_0|x-y|^{2m-n}\big).
\end{equation}
Observe that $b_0|\cdot|^{2m-n}$ is the fundamental solution of $(-\Delta)^m$, then
\begin{equation*}\label{equ3.38}
	(-\Delta)^m\partial^{\alpha+\beta}(b_0|\cdot|^{2m-n})=\partial^{\alpha+\beta}(-\Delta)^m(b_0|\cdot|^{2m-n})=\partial^{\alpha+\beta}\delta_0
\end{equation*}
holds in distributional sense. Taking Fourier transform on both sides of \eqref{equ3.38} yields
\begin{equation*}\label{equ3.39}
	\mathcal{F}\Big(\partial^{\alpha+\beta}(b_0|\cdot|^{2m-n})\Big)(\xi)=\frac{\i^{|\alpha|+|\beta|}\xi^{\alpha+\beta}}{|\xi|^{2m}}.
\end{equation*}
 Note that $|\alpha|, |\beta|\ge \mathbf{k}_c$, thus $|\alpha|+|\beta|-2m>-n$,
and  $\xi^{\alpha+\beta}/|\xi|^{2m}$ is a tempered distribution. Using Fourier inversion formula we deduce that
\begin{equation*}\label{equ3.40} \partial_x^\alpha\partial_y^\beta\big(b_0|x-y|^{2m-n}\big)=\frac{\i^{|\alpha|+|\beta|}}{(2\pi)^n}\lim\limits_{\epsilon\to0+}\int_{\R^n}\frac{\xi^{\alpha+\beta}e^{\i\xi(x-y)-\epsilon|\xi|^2}}{|\xi|^{2m}}\d\xi.
\end{equation*}
On the other hand,
\begin{equation*}\label{equ3.41}
	\partial_x^\alpha\partial_y^\beta R_0(-1)(x-y)=\frac{\i^{|\alpha|+|\beta|}}{(2\pi)^n}\lim\limits_{\epsilon\to0+}\int_{\R^n}\frac{\xi^{\alpha+\beta}e^{\i\xi(x-y)-\epsilon|\xi|^2}}{|\xi|^{2m}+1}\d\xi.
\end{equation*}
Combining \eqref{equ3.37} and the above two relations,
we have
\begin{equation*}\label{equ3.42}\begin{split}
		(-1)^{|\beta|} A_{\alpha,\beta}\alpha!\beta!
		&=\lim\limits_{x,y\to0}\lim\limits_{\epsilon\to0+}
		\frac{(-1)^{|\beta|} \i^{|\alpha|+|\beta|}}{(2\pi)^n}
		\int_{\R^n}\frac{-\xi^{\alpha+\beta}e^{\i\xi(x-y)-\epsilon|\xi|^2}}{(1+|\xi|^{2m})|\xi|^{2m}}\d\xi
		\\&=\frac{(-1)^{|\beta|} \i^{|\alpha|+|\beta|}}{(2\pi)^n}
		\int_{\R^n}\frac{-\xi^{\alpha+\beta}}{(1+|\xi|^{2m})|\xi|^{2m}}\d\xi,
	\end{split}
\end{equation*}
where the last equality follows from the fact  $\frac{\xi^{\alpha+\beta}}{(1+|\xi|^{2m})|\xi|^{2m}}\in L^1(\R^n)$ with  $|\alpha|, |\beta|\le 2m-\frac{n+1}{2}$.
\end{proof}

\begin{remark}
The  identity \eqref{equ2.7-00}, namely
\begin{equation}\label{equ2.7}
	a_j=e^{\frac{\pm\pi \i}{2m}(2j+n-2m)} a_j^\pm,
\end{equation}
establishes the precise relationship between the coefficients of $R_0(-\lambda^{2m})$ and $R^{\pm}_0(\lambda^{2m})$.  This relationship is crucial for for the proof of Theorem \ref{thm3.4} presented in the following section.. 
\end{remark}

\section{Asymptotic expansions of the perturbed resolvent around zero energy}\label{sect-3}
	This section  is devoted to deriving the asymptotic expansion of the perturbed resolvent $(M^{\pm}(\lambda))^{-1}$ near zero energy, which is essential for understanding  the low energy behavior of the fundamental solution of \eqref{equ1.1.1}. 

For notational convenience, we say that $T(\lambda) \in \mathfrak{S}^{b}_{K}(\Omega)$ for an open set $\Omega\subset\mathbb{R}$ if $\{T(\lambda)\}_{\lambda \in \Omega}$ is a family of operators in $L^2$ satisfying
\begin{equation}\label{eq:def for symb class}
	\left|\langle \partial_{\lambda}^{j}T(\lambda)f, g \rangle_{L^2 \times L^2}\right| \leq C_j \|f\|_{L^2} \|g\|_{L^2} |\lambda|^{b-j}, \quad \lambda \in \Omega, ~ 0 \leq j \leq K,
\end{equation}
for all $f, g \in L^2$, where the constants $C_j$ are independent of $f$, $g$, and $\lambda$. 

 We emphasize that the asymptotic behavior varies depending on the different spectral conditions at zero energy. To systematically address this dependence, we introduce a family of index sets  $J_{\mathbf{k}}$ parameterized by the resonance kind $\mathbf{k}$.
Specifically, if $1\le n\leq 2m-1$,  then denote
\begin{equation}\label{equ3.12}
	J_{\k}=\begin{cases}\{0, \cdots,\  m-\frac{n+1}{2},\  m-\frac{n}{2}\},\quad&\mbox{if}\,\, \k=0,\\
		\{0, \cdots,\  m-\frac{n+1}{2},\   m-\frac{n}{2},\  m-\frac{n-1}{2}, \ \cdots,\    m-\frac{n-1}{2}+\k-1\},\quad&\mbox{if}\,\, 1\le \k\le m_n=m,\\
		\{0, \cdots,\  m-\frac{n+1}{2},\  m-\frac{n}{2},\   m-\frac{n-1}{2}, \ \cdots,\    2m-\frac{n+1}{2},\ 2m-\frac{n}{2}\},&\mbox{if}\,\, \k=m_n+1.
	\end{cases}
\end{equation}
If $2m+1\leq n\leq 4m-1$,  then denote
\begin{equation}\label{equ3.12-1}
	J_{\k}=\begin{cases}
		\{ m-\frac{n}{2}\},\quad&\mbox{if}\, \, \k=0,\\
		\{ m-\frac{n}{2},\ 0,  \cdots, \ \k-1 \},\quad&\mbox{if}\, \, 1\le \k\le m_n=2m-\frac{n-1}{2},\\
		\{ m-\frac{n}{2},\ 0,  \cdots, \ 2m-\frac{n+1}{2},\  2m-\frac{n}{2} \},&\mbox{if}\,\, \k=m_n+1.
	\end{cases}
\end{equation}
Here $m_n=\min\{m, 2m-\frac{n-1}{2}\}$ is given by \eqref{equ0.1}. Note that by definition of $J_{\k}$ above, one has
\begin{equation}\label{eq:max J_k}
	\max J_{\k}=
	\begin{cases}
		m-\frac n2, &\quad \mbox{if} ~\k=0, \\
		\k_c+\k-1, & \quad \mbox{if}~ 1\le \k\le m_n, \\
		2m-\frac{n}{2},  &\quad \mbox{if}~ \k=m_n+1.
	\end{cases}
\end{equation}
Here  $\k_c=\max\{m-\frac{n-1}{2},0\}$ is given in \eqref{equ3.3.1}.
For integers $j> -n$ we define the operators 
\begin{equation}\label{eq:def of G_j}
	G_{j}f(x)=\int_{\R^n}|x-y|^{j}f(y)\,\d y,\quad \, f\in \mathcal{S}(\R^n),
\end{equation}
and set
\begin{equation}\label{equ3.5}
	T_0=U+b_0vG_{2m-n}v,
\end{equation}
where $b_0$ is given by \eqref{eq2.10.11}. The main result in this section is the following
\begin{theorem}\label{thm3.4}
Under Assumption \ref{assum1}, there exists $\lambda_0 \in (0,1)$ such that $M^\pm(\lambda)$ have bounded inverses in $L^2$ for all $0 < \lambda < \lambda_0$, which admit the expansions
\begin{equation}\label{equ3.48}
	(M^\pm(\lambda))^{-1} = \sum_{i,j\in J_{\mathbf{k}}} \lambda^{2m-n-i-j} Q_i \big(M^\pm_{i,j} + \Gamma_{i,j}^\pm(\lambda)\big) Q_j,
\end{equation}
where $Q_j$ are projection operators defined in \eqref{eq-Q_j-odd}, $M^\pm_{i,j}$ and $\Gamma_{i,j}^\pm(\lambda)$ are bounded operators on $L^2$.  The $\lambda$-dependent operators $\Gamma_{i,j}^\pm(\lambda)$ satisfy
\begin{equation}\label{equ3.47}
	\Gamma_{i,j}^\pm(\lambda) \in \mathfrak{S}^{\frac{1}{2}}_{\frac{n+1}{2}}((0,\lambda_0)).
\end{equation}
In addition,  we have  $$M^\pm_{m-\frac{n}{2},m-\frac{n}{2}}=(Q_{m-\frac{n}{2}}T_0Q_{m-\frac{n}{2}})^{-1}, \,\,\, M^\pm_{2m-\frac{n}{2},2m-\frac{n}{2}}=(b_1Q_{2m-\frac{n}{2}}vG_{4m-n}vQ_{2m-\frac{n}{2}})^{-1},$$ 
and 
$$M^\pm_{i,j}=0,\quad \text{when $i \neq j$ with either $i$ or $j \in\left\{\mbox{$m-\frac{n}{2}, 2m-\frac{n}{2}$}\right\}$.} $$ 
Finally,
\begin{equation}\label{equ3.47.1}
	\begin{cases}
		\Gamma_{m-\frac{n}{2},m-\frac{n}{2}}^\pm(\lambda) \in \mathfrak{S}^{\max\{1, n-2m\}}_{\frac{n+1}{2}}((0,\lambda_0)), & \text{if } \mathbf{k}=0, \\
		\Gamma_{2m-\frac{n}{2},2m-\frac{n}{2}}^\pm(\lambda) \in \mathfrak{S}^{1}_{\frac{n+1}{2}}((0,\lambda_0)), & \text{if } \mathbf{k}=m_n+1.
	\end{cases}
\end{equation}
\end{theorem}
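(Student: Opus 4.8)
The plan is to prove Theorem~\ref{thm3.4} by analyzing the operator matrix of $M^\pm(\lambda)$ with respect to a carefully chosen direct-sum decomposition of $L^2$, and then inverting it block-by-block using the Feshbach (Schur complement) formula. First I would use the free resolvent expansion from Proposition~\ref{pro2.1} (with $\theta$ chosen large enough, governed by $\beta$ in Assumption~\ref{assum1}) to write, for $0<\lambda<\lambda_0$,
\begin{equation*}
	M^\pm(\lambda)=U+b_0 vG_{2m-n}v+\sum_{j\in J_{\mathbf k},\,j\neq m-\frac n2,\,2m-\frac n2}\lambda^{2j+n-2m}c_j^\pm vG_{2j}v+(\text{error}),
\end{equation*}
where the error term lies in $\mathfrak{S}^{\frac12}_{\frac{n+1}{2}}$ after absorbing the half-integer powers of $\lambda$ coming from $R_0^\pm$; note $T_0=U+b_0vG_{2m-n}v$ is the $\lambda$-independent leading term. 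The decomposition of $L^2$ is $L^2=S_{m-\frac{n+1}{2}}^\perp L^2\oplus S_{m-\frac{n+1}{2}}L^2$ in the case $1\le n\le 2m-1$ (and the analogous one when $2m+1\le n\le 4m-1$), where the first summand is finite-dimensional and further decomposes as $\bigoplus_{j\in J'_{\mathbf k}}Q_jL^2$. On the infinite-dimensional piece $S_{m-\frac n2}L^2$, invertibility of the relevant block is immediate because $T_0$ restricted there is a compact perturbation of $U$ that stays boundedly invertible (this mimics \cite{JK}); the real work is the finite-dimensional complement.

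For the finite-dimensional part, I would carry out the three steps flagged in the outline. Step one: use Lemma~\ref{lemma2.2} — the expansion of $R_0(-1)$ in monomials $x^\alpha y^\beta$ — together with the relation between $R^\pm(1)$ and $R^\pm(-1)$ (Lemma~\ref{lemma2.2}) to rewrite the off-diagonal and diagonal blocks $Q_i(\cdot)Q_j$ so that the $\lambda^0$-order pieces become exactly the matrices $\mathbf D_{00}$ (on $\bigoplus_{J'_{\mathbf k}}Q_jL^2$) and $\mathbf D_{11}$ (on $\bigoplus_{J''_{\mathbf k}}Q'_jL^2$), up to errors in $\mathfrak{S}^{\frac12}_{\frac{n+1}{2}}$ and terms killed by the orthogonality relations \eqref{equ3.14}, \eqref{equ3.16}. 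Step two: invoke Lemma~\ref{lem3.3}, which gives that $\mathbf D_{00}>0$ strictly and $\mathbf D_{11}<0$ strictly via the Gram-matrix representation; this is what guarantees the Schur complements that appear at each stage are invertible with controlled norm uniformly in small $\lambda$. Step three: apply the abstract Feshbach formula \cite[Lemma 2.3]{JN} iteratively, peeling off one $Q_j$-block at a time in increasing order of $j\in J_{\mathbf k}$; each application contributes a factor $\lambda^{2m-n-i-j}$ and produces the leading operators $M^\pm_{ij}$ together with remainders $\Gamma^\pm_{ij}(\lambda)$. The scaling bookkeeping — tracking which power of $\lambda$ multiplies which $Q_iAQ_j$ block after each Schur-complement step — is what produces the exponent $2m-n-i-j$ in \eqref{equ3.48}, and the $S^b_K$-calculus (products and inverses of $S^b$-symbols stay in $S^b$) propagates the regularity class $\mathfrak{S}^{\frac12}_{\frac{n+1}{2}}$ through.

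For the special diagonal entries: when zero is regular ($\mathbf k=0$), the only surviving index is $j=m-\frac n2$ with $S_{m-\frac n2}L^2=\{0\}$, so the bottom block is a scalar-type inversion of $Q_{m-\frac n2}T_0Q_{m-\frac n2}$, giving $M^\pm_{m-\frac n2,m-\frac n2}=(Q_{m-\frac n2}T_0Q_{m-\frac n2})^{-1}$; the sharper remainder class $\mathfrak{S}^{\max\{1,n-2m\}}_{\frac{n+1}{2}}$ in \eqref{equ3.47.1} comes from noting that the next-order term in the $R_0^\pm$ expansion past $T_0$ is of order $\lambda^{\min\{2, n-2m+2\}}$ times a bounded operator (one uses $2m-n+2m l$-type exponents and $\lambda^2|x|^2$-type terms), not merely $\lambda^{1/2}$. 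When zero is an eigenvalue ($\mathbf k=m_n+1$), the extra index $2m-\frac n2$ enters with $G_{4m-n}$, and the leading block is $b_1Q_{2m-\frac n2}vG_{4m-n}vQ_{2m-\frac n2}$, whose invertibility on that (finite-dimensional, since $S_{2m-\frac n2}L^2=\{0\}$) space is again a strict-definiteness statement of Gram type; the remainder is $\mathfrak{S}^1_{\frac{n+1}{2}}$ for the same reason. The main obstacle I anticipate is the bookkeeping in step three: ensuring that at each Feshbach stage the Schur complement genuinely has the claimed leading $\lambda$-power and that the "error" blocks do not degrade the regularity index below $\frac{n+1}{2}$ or the power below $\frac12$ — this requires the orthogonality identities of Proposition~\ref{prop3.2} to be applied at exactly the right places to cancel would-be lower-order obstructions, and the global block structure (the fact that $\mathbf D_{00},\mathbf D_{11}$ are definite, not merely invertible) is what makes the uniform-in-$\lambda$ bounds go through in all odd $n<4m$ at once.
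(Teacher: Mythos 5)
Your plan correctly identifies the key ingredients — the free resolvent expansion, the orthogonality identities of Proposition~\ref{prop3.2}, the Gram-matrix definiteness of Lemma~\ref{lem3.3}, and the Feshbach formula — and correctly locates the crux in inverting a finite-dimensional block matrix. But the inversion strategy you propose is genuinely different from the paper's, and in a way that matters: you propose \emph{iterating} Feshbach, ``peeling off one $Q_j$-block at a time in increasing order of $j$,'' which is exactly the Jensen--Nenciu-style iteration scheme (as used in the $m=2$ works \cite{GT19,EGT,lsy,SWY}) that the paper explicitly sets out to avoid. The paper instead first conjugates $M^\pm(\lambda)$ by the rescaling operator $B=(\lambda^{-j}Q_j)_{j\in J_{\mathbf k}}$ to form $\mathbb{A}^\pm=\lambda^{2m-n}B^*M^\pm(\lambda)B$, so that the leading part $D^\pm$ of $\mathbb{A}^\pm$ becomes $\lambda$-\emph{independent}; all small-$\lambda$ dependence is then recovered by a single Neumann series and the identity $(M^\pm(\lambda))^{-1}=\lambda^{2m-n}B(\mathbb{A}^\pm)^{-1}B^*$. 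The exponent $\lambda^{2m-n-i-j}$ in \eqref{equ3.48} comes entirely from this conjugation by $B$ and the final sandwich, not — as you claim — from ``each application'' of Feshbach contributing a factor; that assertion is a genuine gap in the bookkeeping, and without the $B$-conjugation it is unclear how your iteration would recover the stated powers uniformly in $i,j$.

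A second point of divergence: after passing to $D^\pm$, the paper applies the Feshbach formula \emph{once}, not iteratively, to the $3\times3$ block matrix \eqref{eq-matrix-2}, after first reducing $D^\pm$ to the real matrix $D$ via the unitary conjugation \eqref{equ3.64'} (this is what the ``$R_0(\pm1)$ versus $R_0(-1)$'' remark in the outline refers to). The strict positivity of $\mathbf D_{00}$ and strict negativity of $\mathbf D_{11}$ then make the single Schur complement $d=D_{11}-D_{01}^*D_{00}^{-1}D_{01}$ injective on a finite-dimensional space by the positivity argument around \eqref{equ3.68.eq}; no iteration over $j$ is needed. Your route is not incoherent as a research plan — it is the classical approach — but it buys you the known complexity blow-up as $m$ grows, whereas the paper's one-shot Feshbach plus $B$-rescaling is precisely what makes the argument uniform over all odd $n<4m$. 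Finally, your heuristic for the sharper remainder class in \eqref{equ3.47.1} has the wrong numerology ($\min\{2,n-2m+2\}$ rather than $\max\{1,n-2m\}$); the paper obtains it from \eqref{equ3.65.55} together with the vanishing of the off-diagonal entries $M^\pm_{i,m-\frac n2}$ of $(D^\pm)^{-1}$, not from the order of the next term in the free resolvent expansion alone.
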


We postpone the proof of Theorem \ref{thm3.4} to Subsection \ref{sec3.3}. In  Subsections \ref{section3.1}-\ref{section3.2}, we  first  establish the following key auxiliary results: (i) Spectral characterizations corresponding to different resonance types, along with a partition of the identity for relevant  operators; (ii) Criteria for determining positive and negative definiteness of certain  operator block matrices.

\begin{remark}\label{rmk-Mij-inverse}
	We mention that in the special case where $m=2$, $1\le n\le 4$, the expansions of $(M^\pm(\lambda))^{-1}$ were obtained in \cite{SWY,lsy,GT19,EGT,GY} in slightly different forms of \eqref{equ3.48}.
\end{remark}

\subsection{Characterizations of zero energy resonances}\label{section3.1}\

In this subsection, we  first introduce the projection operators $Q_j$ featured in Theorem~\ref{thm3.4} and then establish an equivalent spectral characterization of zero energy resonances using these operators.

Recall that 
\begin{equation}\label{equ3.1}
M^{\pm}(\lambda)=U+vR_0^\pm(\lambda^{2m})v,\quad v(x)=|V(x)|^{\frac12}, \,\,U(x)=\sgn V(x),
\end{equation}
where $\sgn x=1$, if $x\ge 0$ and $\sgn x=-1$, if $x< 0$. Applying the expansion \eqref{equ2.2.2} with $\theta=4m-n+1$, we can expand $M^{\pm}(\lambda)$ as follows.

\noindent (i) If $1\le n\le 2m-1$, then 
\begin{equation}\label{eq:exp for M-1}
\begin{aligned}
M^{\pm}(\lambda)=&\sum\limits_{j=0}^{ m-\frac{n+1}{2}}\lambda^{n-2m+2j}a_j^\pm vG_{2j}v+T_0 \\
&+\sum_{j=m-\frac{n-1}{2}}^{2m- \frac{n+1}{2}}\lambda^{n-2m+2j}a_j^\pm vG_{2j}v+ \lambda^{2m}b_1vG_{4m-n}v+vr_{4m-n+1}^\pm(\lambda)v.	
\end{aligned}
\end{equation}
\noindent (ii) If $2m+1\le n\le 4m-1$, then 
\begin{equation}\label{eq:exp for M-2}
	M^{\pm}(\lambda)=T_0+
	\sum_{j=0}^{2m- \frac{n+1}{2}}\lambda^{n-2m+2j}a_j^\pm vG_{2j}v+ \lambda^{2m}b_1vG_{4m-n}v+vr_{4m-n+1}^\pm(\lambda)v.
\end{equation}

 Now we introduce certain subspaces of $L^2$ and their associated orthogonal projections related to $T_0$. In what follows,  $X^\perp$ denotes the orthogonal complement of a subspace $X\subset L^2(\R^n)$,  and ${\rm{Ker}}(T)$ represents the kernel  of an operator $T$ on $L^2(\R^n)$.  
 
 Denoted by $S_{-j}=I$ when $j\in \N_+$. On the other hand,  in all old dimensions, we always define 
 \begin{equation}\label{eq-S_2m-n2} 
 	S_{2m-\frac n2}L^2=\{0\}.
 \end{equation}
 For a fixed integer   $0\le \k\le m_n+1$, the precise definition of $S_j L^2$ depends on whether $n<2m$ or $n>2m$.
 
\textbf{Case 1:  $1\le n \le2m-1$}. We define
$$S_j L^2=\{x^\alpha v;\,\,|\alpha|\le j\}^\perp,\quad \mbox{if}\,\ 0\le j\le m-\mbox{$\frac{n+1}{2}$},$$
and 
 \begin{equation}\label{eq-S_j-odd-1}
 		S_j L^2=\begin{cases}
 			\mbox{Ker}\,(S_{m-\frac{n+1}{2}} T_0 S_{m-\frac{n+1}{2}})\bigcap S_{m-\frac{n+1}{2}} L^2,\,&\mbox{if}\,\ j=m-\frac{n}{2},\\
 			\mbox{Ker}(S_{2m-n-j-1}T_0S_{m-\frac{n}{2}})\bigcap\{x^\alpha v;\,\,\,|\alpha|\le j\}^\perp\bigcap S_{m-\frac{n}{2}} L^2,
 			&\mbox{if}\,\ m-\frac{n-1}{2}\le j\le  \max J_{\k},\\
 			\{0\},&\mbox{if}\,\ j>\max J_{\k}.
 		\end{cases}
 \end{equation}
 
  \textbf{Case 2:  $2m+1\leq n\leq 4m-1$}. In this case, we define
 \begin{equation}\label{eq-S_j-odd-2}
 	S_j L^2=\begin{cases}
 		\mbox{Ker}\,(T_0), \,&\mbox{if}\,\ j=m-\frac{n}{2},\\
 		\{x^\alpha v;\,\,\,|\alpha|\le j\}^\perp
 		\bigcap S_{m-\frac{n}{2}} L^2,
 		&\mbox{if}\,\ 0\le j\le  \max J_{\k},\\
 		\{0\},&\mbox{if}\,\ j>  \max J_{\k}.
 	\end{cases}
 \end{equation}
 We make the following remarks concerning the operators $S_j$:
 \begin{itemize}

	\item[($1$)]  The orthogonal projections $S_j$ are well-defined by (ii) of Assumption \ref{assum1} for a fixed resonance kind $\k$. 

    \item[($2$)]  For each $j\in J_{m_n+1}$, the projections satisfy the following  cancellation property:
\begin{equation}\label{CforQ_j}
	S_j(x^\alpha v)=0, \quad \text{for}~\alpha\in {\mathbb{N}^n_0},~|\alpha|\leq\max\{0,[j]\}.
\end{equation}

    \item[($3$)] When $1\le n\le 2m-1$, the subspaces form the following inclusions:
\begin{equation*}
	S_0L^2\supset S_1L^2\supset\cdots\supset S_{m-\frac{n+1}{2}}L^2\supset \mathbf{S_{m-\frac{n}{2}}L^2} \supset S_{m-\frac{n-1}{2}}L^2\supset\cdots\supset S_{2m-\frac{n+1}{2}}L^2\supset S_{2m-\frac{n}{2}}L^2=\{0\}.
\end{equation*}
When $2m+1\le n\le 4m-1$,  the subspace hierarchy becomes
\begin{equation*}
\mathbf{S_{m-\frac{n}{2}}L^2}\supset S_0L^2\supset S_1L^2\supset\cdots\supset S_{2m-\frac{n+1}{2}}L^2 \supset S_{2m-\frac{n}{2}}L^2=\{0\}.
\end{equation*}

    \item[($4$)]  The Riesz-Schauder theory implies that the space $S_{m-\frac{n}{2}}L^2$ is finite-dimensional.

    \end{itemize}

We define  a second family  of orthogonal projections $\{Q_j\}_{j\in J_{\k}}$ as follows: 
\begin{equation}\label{eq-Q_j-odd}
	Q_j:=
	\begin{cases}   
		I-S_0, &\quad \text{if $1\le n\le 2m-1$ and $j=0$}, \\
		I-S_{m-\frac n2},  &\quad \text{if $2m< n <4m$ and $j=m-\frac n2$}, \\
		S_{j'}-S_j,   &\quad \text{else},     
	\end{cases}
\end{equation}
where $$ j'=\max\{l\in J_{m_n+1};\,l<j\}.$$
By definition of $Q_j$, it follows for all $j\in J_{m_n+1}$ that
\begin{equation}\label{CforQ_j-1}
	Q_j(x^\alpha v)=0, \quad \,\,\,~\alpha\in {\mathbb{N}^n_0},~|\alpha|\leq\max\{0,[j+1/2]\}-1:=\delta(j)-1.
\end{equation}
This property will be used frequently in our proof.

The following remarks on  $\{Q_j\}_{j\in J_{\k}}$ are in order.
\begin{itemize}
\item[($1$)]  Similar projection operators have been defined in \cite{EGT,FSY,FSWY,MWY,SWY}, among others.

\item[($2$)]  When $2m< n\le 4m-1$ and  $0\le j\le 2m-\frac{n+1}{2}$, one has 
\begin{equation}\label{eq:Q_j for large j-2}
	Q_{j}L^2=\{T_0\psi=0\,\mid\, \langle\psi, x^{\alpha}v \rangle=0 ~\text{for all}~|\alpha|\le j-1 ~\text{but}~ \langle\psi, x^{\alpha}v \rangle\neq 0 ~\text{for some $\alpha$ with}~|\alpha|= j \}. 
\end{equation}
    \end{itemize}
    
These projections are motivated by the following key proposition, which  establishes a bijection between  resonance functions (solutions to \eqref{equ1.2}) and  solutions to the equation
\begin{equation}\label{eq:the kernel of T_0}
	T_0\psi = U\psi + b_0vG_{2m-n}v\psi = 0, \quad \psi \in L^2,
\end{equation}
where $T_0$ defined in \eqref{equ3.5} appears in the asymptotic expansions of  $M^{\pm}(\lambda)$.

\begin{proposition}\label{pro:one to one corres of solutions} Under decay assumption \eqref{eq:decay assump} for some fixed resonance kind $\k\in\{1,2,\ldots,m_n+1\}$, if $\kappa\in\{1,2,\ldots,\k+1\}$ and $\kappa\le m_n+1$,  then the following statements are equivalent: 

\noindent (i) $0\ne \phi\in W_{-1/2-m_n+\kappa}$ is a  solution  to the equation \eqref{equ1.2}.

\noindent  (ii) $0\neq \psi=Uv\phi\in S_{j_{\kappa-1}}L^2$, where $j_{\kappa-1}=\max J_{\kappa-1}$.
\end{proposition}

\begin{proof}
We organize the proof according to different dimensional regimes $n<2m$ and $n>2m$.

\textbf{The case $1\le n\le 2m-1$}. 
We first prove  (ii) implies   (i).  Assume that $\psi\in S_{j_{\kappa-1}}L^2$, then one has
$$
\begin{cases}
S_{m-\frac{n+1}{2}}T_0\psi=0,\quad j_{0}=m-\frac n2,  \quad &\text{if $\kappa=1$},	\\
S_{m-\frac{n-1}{2}-\kappa}T_0\psi=0,\quad j_{\kappa-1}=m-\frac{n-1}{2}+\kappa-2, \quad &\text{if $1<\kappa\le m_n+1=m+1$},
\end{cases}
$$
and 
\begin{equation} \label{eq:cancell propery for kappa}
\int x^{\alpha} v(x)\psi(x) dx=0,\quad \text{for all}~ \mbox{$|\alpha|\le [j_{\kappa-1}]=m-\frac{n-1}{2}+\kappa-2$}.
\end{equation}
Thus we have
\begin{align}\label{equ.psi}
	U\psi(x)=
	\begin{cases}
		-b_0vG_{2m-n}v\psi(x)+(I-S_{m-\frac{n-1}{2}-\kappa})T_0\psi(x), \quad &\text{if $1\le \kappa\le  m-\frac{n-1}{2}$},\\
		-b_0vG_{2m-n}v\psi(x), \quad 	&\text{if $m-\frac{n-1}{2}<\kappa\le m+1$}.
	\end{cases}
\end{align}
The definition of $S_{m-\frac{n-1}{2}-\kappa}$ indicates that
$$(I-S_{m-\frac{n-1}{2}-\kappa})T_0\psi(x)=\sum_{|\alpha|\le  m-\frac{n-1}{2}-\kappa}C_\alpha x^{\alpha}v.$$
Set
\begin{equation}\label{eq-phi-729}
   \phi(x)=-b_0G_{2m-n}v\psi(x)+\sum_{|\alpha|\le  m-\frac{n-1}{2}-\kappa}C_\alpha x^{\alpha}, 
\end{equation}
where we note that when $m-\frac{n-1}{2}-\kappa<0$, the sum $\sum_{|\alpha|\le  m-\frac{n-1}{2}-\kappa}C_\alpha x^{\alpha}$ vanishes identically. By comparing \eqref{equ.psi} and \eqref{eq-phi-729},  we obtain $\psi = Uv\phi$. Furthermore,  since
$$(-\Delta)^m b_0G_{2m-n}v\psi = v\psi=V\phi\quad\mbox{and}\quad (-\Delta)^{m}\Big(\sum_{|\alpha|\le  m-\frac{n-1}{2}-\kappa}C_\alpha x^{\alpha}\Big)=0,$$ 
we conclude that $((-\Delta)^m + V)\phi = 0$ holds in distributional sense.

Now we  analyze the  term $-b_0vG_{2m-n}v\psi(x)$. Under the decay assumption \eqref{eq:decay assump} for $V$, we have
$$v\psi \in L^{2}_{2m-\frac n2+2\k+2+},$$ 
where the weight index satisfies
$$\mbox{$2m-\frac n2+2\k+2+>\max\{2m-n, [j_{\kappa-1}]+1\}+\frac n2$}.$$
Applying Lemma \ref{lemmaA.1} and the cancellation property \eqref{eq:cancell propery for kappa}, we obtain the pointwise estimate
\begin{equation}\label{eq-727-2}
    |G_{2m-n}v\psi(x)|=\left| \int|x-y|^{2m-n}v(y)\psi(y)\right|\lesssim\langle x\rangle^{2m-n-[j_{\kappa-1}]-1} = \langle x\rangle^{m-\frac{n-1}{2}-\kappa}.
\end{equation}
This implies
\begin{equation}\label{eq-727-3}
    \phi(x)\in W_{-\frac{1}{2}-m+\kappa}(\R^n).
\end{equation}

Next, we show that  (i) implies   (ii).  Assume that there  exists  $\phi(x)\in W_{-\frac{1}{2}-m+\kappa}(\mathbb{R}^n)$ such that \eqref{equ1.2} holds for $1\le \kappa \le m_n+1=m+1$ and $\kappa \le \k+1$. 
Let $\psi=Uv\phi$.
Choose $\eta(x)\in C_0^{\infty}(\R^n)$  such that $\eta(x)=1$ for $|x|\le 1$ and $\eta(x)=0$ for $|x|>2$. For any $\alpha\in {\N_0}^n$ with $|\alpha|\le  [j_{\kappa-1}]=m-\frac{n-1}{2}+\kappa-2$, we obtain from \eqref{equ1.2} that  
\begin{equation}\label{eq:to obtain cancell}
	\begin{aligned}
		&\left|\int_{\R^n}x^{\alpha}v(x)\psi(x)\eta(\delta x)dx \right|= \left| \int_{\R^n}x^{\alpha}\eta(\delta x)(-\Delta)^m\phi(x)\,\d x\right|\\
		=&\bigg|\sum_{\alpha_i\geq\beta_i\  \text{for}\ i=1,\cdots,n \atop |\beta|+|\gamma|=2m,\  \beta,\gamma \in\N_0^n } C_{\beta,\gamma}\delta^{|\gamma|}\int_{\R^n}x^{\alpha-\beta}\phi(x)(\partial^{\gamma}\eta)(\delta x)\,\d x\bigg| \\
		\lesssim & \sum_{\alpha_i\geq\beta_i\  \text{for}\ i=1,\cdots,n \atop |\beta|+|\gamma|=2m,\  \beta,\gamma \in\N_0^n } \delta^{m-\frac{n+1}{2}+\kappa-|\alpha|-} \left\|\langle x\rangle^{|\alpha|-|\beta|+m+\frac12-\kappa+}\partial^{\gamma}\eta(x) \right\|_{L^2}  \left\|\langle x\rangle^{-(m+\frac12-\kappa+)} \phi(x)\right\|_{L^2} \\
		\rightarrow & 0,\qquad \mbox{as}\,\,\,\delta\rightarrow0,
	\end{aligned}
\end{equation}
 where the last inequality follows from scaling and the fact  $$\mbox{$|\gamma|+|\beta|-|\alpha|-(m+\frac12-\kappa)-\frac{n}{2}-=m-\frac{n+1}{2}+\kappa-|\alpha|-.$}$$
 In particular, we have
\begin{equation}\label{equA.7}
	\int_{\R^n}x^{\alpha}v(x)\psi(x)\d x=	\int_{\R^n}x^{\alpha}V(x)\phi(x)\d x=0,\quad \, \,\text{if}\, \, \mbox{$|\alpha|\le [j_{\kappa-1}]=m-\frac{n-1}{2}+\kappa-2$}.
\end{equation}
Observe that
\begin{align*}
S_{m-\frac{n-1}{2}-\kappa}vG_{2j}V\phi(x) 
&=S_{m-\frac{n-1}{2}-\kappa}v \left( \int_{\R^n}|\cdot-y|^{2j}V(y)\phi(y)\d y\right)(x) \\
&=\sum_{ \alpha,\, \beta \in\N_0^n\atop |\alpha|+|\beta|\le 2j} C_{\alpha,\beta} \,  S_{m-\frac{n-1}{2}-\kappa}(z^{\alpha} v(z)) (x) \int_{\R^n} y^{\beta} V(y)\phi(y)\d y \\
&=0
\end{align*}
holds for $j=0,1,\ldots, m-\frac{n+1}{2}$. Thus, applying the expansion \eqref{equ2.2} and the dominated convergence theorem, it follows that for any Schwartz function $f$, 
\begin{equation*}
	\begin{aligned}
	\big\langle S_{m-\frac{n-1}{2}-\kappa}(v b_0G_{2m-n}^0v)\psi,\, f\big\rangle&=\big\langle S_{m-\frac{n-1}{2}-\kappa}(v b_0G_{2m-n}^0)V\phi,\, f \big\rangle \\
     &=\lim_{\lambda\rightarrow0} \big\langle  S_{m-\frac{n-1}{2}-\kappa} vR_0(-\lambda^{2m})V\phi, \, f \big\rangle \\
	&=\lim_{\lambda\rightarrow0} \big\langle S_{m-\frac{n-1}{2}-\kappa} v \big((-\Delta)^{m}+\lambda^{2m}\big)^{-1}V\phi(x), f \big\rangle \\
	&= \big\langle S_{m-\frac{n-1}{2}-\kappa} v(-\Delta)^{-m}V\phi(x), f \big\rangle.
	\end{aligned}
\end{equation*}
 Noting $\phi=-(-\Delta)^{-m}V\phi(x)$ for $\phi$ satisfies \eqref{equ1.2}, the above identity implies that 
 $$	\big\langle S_{m-\frac{n-1}{2}-\kappa}(v b_0G_{2m-n}^0v)\psi,\, f\big\rangle=-\big\langle S_{m-\frac{n-1}{2}-\kappa} v\phi(x), f \big\rangle=-\big\langle S_{m-\frac{n-1}{2}-\kappa} U\psi(x), f \big\rangle, $$ 
 that is
 $$ S_{m-\frac{n-1}{2}-\kappa}T_0\psi=0.$$
 This combined with \eqref{equA.7} implies that $\psi \in S_{j_{\kappa-1}}L^2$, 
which completes the proof for the case $1\le n < 2m$.

\textbf{The case $2m< n< 4m$}. 
The argument in these dimensions proceeds analogously to the proof for previous case. Indeed, to prove (i) from (ii), observe that by definition \eqref{eq-S_j-odd-2},
 $\psi\in S_{j_{\kappa-1}}L^2$ if and only if
$$T_0\psi=0,$$
and 
\begin{equation} \label{eq:cancell propery for kappa-1}
	\int x^{\alpha} v(x)\psi(x) \d x=0,\quad \text{for all}~ |\alpha|\le [j_{\kappa-1}]=\kappa-2,
\end{equation}
where we use the fact 
$$j_{\kappa-1}=
\begin{cases}
m-\frac n2,\quad &\kappa=1,\\
\kappa-2,\quad &1<\kappa\le m_n+1=2m-\frac{n-1}{2}+1.
\end{cases}
$$
Then letting $\phi=-b_0G_{2m-n}v\psi(x)$, we have $\psi=Uv\phi$. It follows from  Lemma \ref{lemmaA.1} that 
$$|\phi(x)|=
|b_0G_{2m-n}v\psi(x)|=\left| b_0 \int|x-y|^{2m-n}v(y)\psi(y) \d y\right|\lesssim\langle x\rangle^{2m-n-[j_{\kappa-1}]-1} = \langle x\rangle^{2m-n-\kappa+1},
$$
which implies $\phi \in  W_{\frac n2-2m+\kappa-1}=W_{-\frac12-m_n+\kappa}$.

Conversely,  we prove that (i) implies (ii).  Assume the existence of $\phi(x) \in W_{-\frac{1}{2} - m_n + \kappa}(\mathbb{R}^n)$ such that \eqref{equ1.2} holds for some $\kappa$ with $1 \leq \kappa \leq m_n + 1 = m + 1$ and $\kappa \leq \k + 1$. Following the same argument as in \eqref{eq:to obtain cancell},  $\psi$ satisfies \eqref{eq:cancell propery for kappa-1}. Note that by \eqref{equ1.2} we have
$$\phi=-(-\Delta)^{-m}V\phi=-b_0G_{2m-n}v\psi.$$
 This implies that $T_0\psi=0$, and therefore $\psi \in S_{j_{\kappa-1}}L^2$ holds, which completes the proof for the case $2m<n\le 4m-1$.
\end{proof}

\begin{remark}\label{remk:for eigenfunctions}
Following the proof of Proposition~\ref{pro:one to one corres of solutions},  when decay assumption \eqref{eq:decay assump} is valid for $\k = m_n$ or $\k = m_n + 1$, we observe that if $\phi \in L^2$ is a solution of \eqref{equ1.2}, then $\psi = Uv\phi$ belongs to $S_{2m - \frac{n+1}{2}}L^2$ $($as seen from the proof of sufficiency$)$. Since $W_{\frac12}\subset L^2$, this implies that $\phi$ is an eigenfunction of $H$ at zero energy if and only if $0\neq\psi = Uv\phi \in S_{2m - \frac{n+1}{2}}L^2$.
\end{remark}

	
	

\begin{proposition}\label{prop3.1}
Let the decay assumption \eqref{eq:decay assump} hold for some $\k\in\{0,\cdots,m_n+1\}$. Then the following statements hold:

\noindent\emph{(\romannumeral1)} Zero energy is a regular point (i.e., $\k=0$) of $H$ if and only if  $\sum_{j\in J_{0}}Q_j=I$.

\noindent\emph{(\romannumeral2)} Zero energy is a resonance of the $\mathbf{k}$-th kind ($1\leq\k\leq m_n+1$) of $H$ if and only if  $\sum_{j\in J_{\mathbf{k-1}}}Q_j\neq I$ and $\sum_{j\in J_{\mathbf{k}}}Q_j=I$.
\end{proposition}

\begin{proof}
    Note that $\{j_{\k} \}=J_{\k}\setminus J_{\k-1}$ for $\k=1,2,\ldots,m_n+1$.  By the definition of $Q_j$, one has
$$\sum_{j\in J_{\k}}Q_j=I-S_{j_{\k}}.$$
 Thus the results follow from Proposition \ref{pro:one to one corres of solutions} and Remark~\ref{remk:for eigenfunctions}. 
\end{proof}

\subsection{A key observation}\label{section3.2}\

We begin by defining the index sets
\begin{equation}\label{equ3.12.11}
	J'_{\k}=\{j\in J_{\k};\,j<\mbox{$m-\frac{n}{2}$}\},\quad	J''_{\k}=\{j\in J_{\k};\,\mbox{$m-\frac{n}{2}$}<j<2m-\mbox{$\frac n2$}\}.
\end{equation}
In particular, $J'_{\k}=\emptyset$ when $2m< n< 4m$.
We view  the subspace $X_0=\bigoplus_{j\in J'_{\k}} Q_jL^2$ as a vector valued space, on which we define the operator block matrix  
\begin{equation}\label{eq:def of bfD_00}
\textbf{D}_{00}=(d_{l,h})_{l,h\in J'_{\k}}:\, X_0\to X_0,
\end{equation} 
where
\begin{equation}\label{equ3.21}
	d_{l,h}=\begin{cases}
		(-\i)^{l+h}(-1)^ha_{\frac{l+h}{2}} Q_lvG_{l+h}vQ_h,\quad&\mbox{if}\,\, l+h\, \,\,\mbox{is even},\\
		0,  &\mbox{if}\,\, l+h\,\, \mbox{is odd},
	\end{cases}
\end{equation}
and $a_{\frac{l+h}{2}}$ is the coefficient of $R_0(-1)(x-y)$ given in \eqref{equ2.7}. 
We decompose $Q_j$ for $j\in J''_{\k}$ into two orthogonal projections:
\begin{equation}\label{eq-decom-for-Qj}
	Q_j=Q_{j,0}+Q_{j,1},	
\end{equation} 
where 
$$Q_{j,0}L^2=Q_jL^2\cap \{x^\alpha v(x):\, |\alpha|\le j\}^{\perp}.$$
Note that $Q_{j,0}=0$ and $Q_j=Q_{j,1}$ when $2m<n\le 4m-1$ by \eqref{eq:Q_j for large j-2}, and $\dim Q_{j,1}L^2 \le \#\{|\alpha|=j\}$. 
When $1\le n<2m$, denote
 $$\widetilde{\textbf{D}}_{11}=(d_{l,h}^1)_{l,h\in J''_{\k}}:\, \bigoplus_{j\in J''_{\k}} Q_{j,1}L^2\to \bigoplus_{j\in J''_{\k}}Q_{j,1}L^2,$$ 
 where
 \begin{equation*}\label{equ3.21-2}
 	d_{l,h}^1=\begin{cases}
 		(-\i)^{l+h}(-1)^ha_{\frac{l+h}{2}} Q_{l,1}vG_{l+h}vQ_{h,1},\quad&\mbox{if}\,\, l+h\, \,\,\mbox{is even},\\
 		0,  &\mbox{if}\,\, l+h\,\, \mbox{is odd},
 	\end{cases}
 \end{equation*}
 When $2m<n< 4m$, we denote
 $${\textbf{D}}_{11}=(d_{l,h})_{l,h\in J''_{\k}}:\, \bigoplus_{j\in {J}''_{\k}} Q_{j}L^2\to \bigoplus_{j\in {J}''_{\k}} Q_{j}L^2,$$ 
 where
 \begin{equation*}\label{equ3.21-2-1}
 	d_{l,h}=\begin{cases}
 		(-\i)^{l+h}(-1)^ha_{\frac{l+h}{2}} Q_{l}vG_{l+h}vQ_{h},\quad&\mbox{if}\,\, l+h\, \,\,\mbox{is even},\\
 		0,  &\mbox{if}\,\, l+h\,\, \mbox{is odd},
 	\end{cases}
 \end{equation*}

The following  lemma reveals  critical structural properties of the operator block matrices $\mathbf{D}_{00}$,  $\widetilde{\mathbf{D}}_{11}$ and   $\mathbf{D}_{11}$, which play a key role in the proof of Theorem \ref{thm3.4}. 

\begin{lemma}\label{lem3.3}
For dimensions $n < 2m$, the operator block matrix $\mathbf{D}_{00}$ is strictly positive, while $\widetilde{\mathbf{D}}_{11}$ is strictly negative. In the case $2m < n \leq 4m-1$, the operator block matrix $\mathbf{D}_{11}$ is strictly negative.
\end{lemma}
\begin{proof}
The negativity of $\widetilde{\mathbf{D}}_{11}$ and $\mathbf{D}_{11}$ follows from analogous arguments. Therefore, we focus our analysis on $\mathbf{D}_{00}$ and $\widetilde{\mathbf{D}}_{11}$.

It follows from \eqref{eq:exp for |x-y|} and \eqref{equp3.18.alpha} that for any $f\in  L^2$,
\begin{equation}\label{eq:exp for J'}
	a_{\frac{l+h}{2}}Q_lvG_{l+h}vQ_hf(x)=\sum_{|\alpha|=l, |\beta|=h}(-1)^{|\beta|}A_{\alpha,\beta}Q_l(x^\alpha v)\int_{\R^n}y^\beta v(y)Q_hf(y)\d y,\quad\mbox{if}\,\,l, h\in J'_{\k},
\end{equation}
and
\begin{equation}\label{eq:exp for J''}
	a_{\frac{l+h}{2}}Q_{l,1}vG_{l+h}vQ_{h,1}f(x)=\sum_{|\alpha|=l, |\beta|=h}(-1)^{|\beta|}A_{\alpha,\beta}Q_{l,1}(x^\alpha v)\int_{\R^n}y^\beta v(y)Q_{h,1}f(y)\d y,\quad\mbox{if}\,\,l, h\in J''_{\k}.
\end{equation}
Note that $Q_{j}L^2$ for $j\in J'_{\k} \cup J''_{\k}$ is finite dimensional by definition and the Riesz-Schauder theory.
Let $\{u_\alpha\}_{|\alpha|=l}$ be an orthonormal basis of $Q_lL^2$ for each $l \in J'_{\k}$. For $l \in J''_{\k}$, and let $\{u_\alpha\}_{\alpha \in I_l}$ be an orthonormal basis of $Q_{l,1}L^2$ where $I_l\subset\{\alpha;\,|\alpha| = l\}$ parametrizes the basis elements of $Q_{l,1}L^2$.

Now we define two  matrices
\begin{equation*}\label{equ3.30}
	E_0=\left((-\i)^{|\alpha|+|\beta|}(A_{\alpha,\beta})_{|\alpha|=l, |\beta|=h}\right)_{l,h \in J'_{\k}},\qquad E_1=\left((-\i)^{|\alpha|+|\beta|}(A_{\alpha,\beta})_{|\alpha|=l, |\beta|=h}\right)_{l,h \in J''_{\k}}.
\end{equation*}
In the following we abbreviate $E_0=((-\i)^{|\alpha|+|\beta|}A_{\alpha,\beta})_{|\alpha|,|\beta|\in J'_{\k}}$  and so on. Let 
$$\tilde{E}_{0}=\left(\tilde{A}_{\alpha,\beta}=\langle d_{|\alpha||\beta|}u_{\alpha}, u_{\beta}\rangle\right)_{|\alpha|,|\beta|\in J'_{\k}}$$
be the matrix representation of the linear operator $\textbf{D}_{00}$ under the orthogonal basis $\{u_\alpha\}_{|\alpha|\in J'_{\k}}$. Using \eqref{eq:exp for J'}, we have
\begin{align*}
	\tilde{A}_{\alpha,\beta}& =  \langle d_{|\alpha||\beta|}u_{\alpha}, u_{\beta}\rangle\\
	&=(-\i)^{|\alpha|+|\beta|}\sum_{|\alpha'|=|\alpha|, |\beta'|=|\beta|} A_{\alpha,\beta} \langle u_{\alpha}, y^{\alpha'}v\rangle \langle x^{\beta'}v, u_{\beta}\rangle\\
	&=(-\i)^{|\alpha|+|\beta|}\sum_{|\alpha'|=|\alpha|, |\beta'|=|\beta|}\Lambda_{\alpha,\alpha'}A_{\alpha',\beta'}\bar{\Lambda}_{\beta',\beta},
\end{align*}
where $\Lambda_{\alpha,\beta}=\langle Q_{|\alpha|}u_{\alpha},\, Q_{|\beta|}(x^\beta v)\rangle$.
Denoted by 
$$\Lambda_0=(\Lambda_{\alpha,\beta})_{|\alpha|,|\beta|\in J'_{\k}}=\begin{pmatrix}
	(\Lambda_{\alpha,\beta})_{|\alpha|=|\beta|=0} & 0 & \cdots & 0 \\
	0 & (\Lambda_{\alpha,\beta})_{|\alpha|=|\beta|=1} & \cdots & 0 \\
	\vdots & \vdots & \ddots & \vdots \\
	0 & 0 & \cdots & (\Lambda_{\alpha,\beta})_{|\alpha|=|\beta|=m-\frac{n+1}{2}}
\end{pmatrix},
$$ 
we have
\begin{equation*}
	\tilde{E}_{0}=\Lambda_0{E}_{0}(\overline{\Lambda}_0)^{T}, 
\end{equation*}
where $(\overline{\Lambda}_0)^{T}$ is the conjugate transpose of $\Lambda_0$.
Since $\{Q_{j}(x^\beta v)\}_{|\beta|=j}$ is also a basis on $Q_{j}L^2$ for $j\in J'_{\k}$, the diagonal block matrix $\Lambda_0$ is nonsingular. Thus matrix
$\tilde{E}_{0}=(\tilde{A}_{\alpha,\beta})_{|\alpha|,|\beta|\in J'_{\k}}$ is congruent to $E_0$ (through $\Lambda_0$). {\bf Therefore the operator block matrix $\mathbf{D}_{00}$ is strictly positive if and only if  the matrix $E_0$ is positive definite.}

Similarly, let 
$$\tilde{E}_{1}=\left(\tilde{A}_{\alpha,\beta}=\langle d_{|\alpha||\beta|}u_{\alpha}, u_{\beta}\rangle\right)_{\alpha \in I_{l},\beta\in I_{h}, \atop l,h \in J''_{\k}}$$
denote the matrix representation of the operator $\widetilde{\textbf{D}}_{11}$ under the orthogonal basis $\{u_\alpha\}_{\alpha\in I_{l}, l\in J''_{\k}}$.
Using \eqref{eq:exp for J''}, we also have
\begin{align*}
	\tilde{A}_{\alpha,\beta}& =  \langle d_{|\alpha||\beta|}u_{\alpha}, u_{\beta}\rangle\\
	&=(-\i)^{|\alpha|+|\beta|}\sum_{|\alpha'|=|\alpha|, |\beta'|=|\beta|} A_{\alpha,\beta} \langle u_{\alpha}, y^{\alpha'}v\rangle \langle x^{\beta'}v, u_{\beta}\rangle\\
	&=(-\i)^{|\alpha|+|\beta|}\sum_{|\alpha'|=|\alpha|, |\beta'|=|\beta|}\Lambda_{\alpha,\alpha'}A_{\alpha',\beta'}\bar{\Lambda}_{\beta',\beta},
\end{align*}
for $\alpha \in I_{l},\beta\in I_{h}$, where $\Lambda_{\alpha,\beta}=\langle Q_{|\alpha|,1}u_{\alpha},\, Q_{|\beta|,1}(x^\beta v)\rangle$. We claim the matrix $\Lambda_1=(\Lambda_{\alpha,\beta})_{\alpha \in I_{l}, l\in J''_{\k} \atop |\beta|\in J''_{\k}}$ has full row rank. In fact, if  $\Lambda_1$ dose
not have full row rank, then the row vectors of this matrix are linearly dependent, which is to say, there exists a sequence of numubers $z_{\alpha}$ which are not all zeros such that for each $|\beta|\in  J''_{\k}$, the following holds: 
$$0=\sum_{l\in J''_{\k}}\sum_{|\alpha|\in\alpha \in I_{l}}  z_{\alpha}\Lambda_{\alpha,\beta}=\Big \langle \sum z_{\alpha} u_{\alpha},\, x^\beta v\Big\rangle= \Big \langle \sum_{\alpha\in I_{|\beta|}} z_{\alpha} u_{\alpha},\, x^\beta v\Big\rangle.$$
One concludes that $\sum_{\alpha\in I_{|\beta|}} z_{\alpha} u_{\alpha} \in Q_{|\beta|,0}L^2$, which implies the contradiction that $z_\alpha=0$ for all $\alpha$. So we have 
$$\quad \tilde{E}_{1}=\Lambda_1{E}_{1}(\overline{\Lambda}_1)^{T}.$$
 {\bf Therefore the operator block matrix $\widetilde{\textbf{D}}_{11}$ is strictly negative if the matrix $E_1$ is negative  definite.}

It suffices to prove that $E_0$ is strictly positive definite and $E_1$ is strictly negative definite.
First, observe that the vectors
 $$\left\{\frac{\xi^\alpha}{(2\pi)^{\frac{n}{2}}\alpha!(1+|\xi|^{2m})^\frac12}\right\}_{|\alpha|\in J'_{\k}}$$
 are linearly independent. From the expression \eqref{equ2.12.000}, we note  that $E_0=((-\i)^{|\alpha|+|\beta|}A_{\alpha,\beta})_{|\alpha|,|\beta|\in J'_{\k}}$ is the Gram matrix associated with these vectors.  Since the vectors are linearly independent,  $E_0$ is strictly positive definite.


Similarly, \eqref{equ2.12.000} implies that $-E_1$ is the Gram matrix associated with the linearly independent vectors
$$\left\{\frac{\xi^\alpha}{(2\pi)^{\frac{n}{2}}\alpha!(1+|\xi|^{2m})^\frac12|\xi|^m}\right\}_{|\alpha|\in J''_{\k}}.$$
Therefore $E_1$ is strictly negative, and the proof is complete.
\end{proof}

\subsection{Proof of Theorem \ref{thm3.4}}\label{sec3.3}\

Before the proof, we first collect some orthogonal properties of  $Q_{j}$.
\begin{lemma}\label{prop3.2}
	For  $i,j\in J_{\mathbf{k}}$, we have the following orthogonal properties:
	\begin{equation}\label{equ3.14}
		Q_ivG_{2l}vQ_j=0,\quad \text{if~} \mbox{$[i+\frac{1}{2}]+[j+\frac{1}{2}]-1\ge 2l$},~~l\in\mathbb{N}_0,
	\end{equation}
	and
	\begin{equation}\label{equ3.16}
		Q_iT_0Q_j=0,\quad \text{if~} \mbox{$i+j>2m-n$}.
	\end{equation}
\end{lemma}
\begin{proof}
	We first prove \eqref{equ3.14}.
	For any $f, g\in L^2$,
	\begin{equation*}
		\left\langle Q_ivG_{2l}vQ_jf,\, g\right\rangle=\int_{\R^{2n}}|x-y|^{2l}v(x)v(y)Q_jf(x)\overline{Q_ig(y)}\d x\d y.
	\end{equation*}
	Note that
	\begin{equation*}\label{equp3.18}
		|x-y|^{2l}=\sum_{|\alpha|+|\beta|=2l}C_{\alpha\beta}x^{\alpha}(-y)^{\beta},
	\end{equation*}
	and by \eqref{CforQ_j-1}, we have 
    $$\int_{\R^n}v(x)x^{\alpha}Q_jf(x)\d x=0,\quad\mbox{ when}\quad |\alpha| \leq [j+\frac12]-1,$$ and 
    $$\int_{\R^n}v(y)y^{\beta}\overline{Q_ig(y)}\d y=0 ,\quad\mbox{ when}\quad |\beta| \leq [i+\frac12]-1.$$ 
    This implies \eqref{equ3.14}.
	
	In order to prove \eqref{equ3.16}, note that by \eqref{eq-S_j-odd-1} and \eqref{eq-S_j-odd-2} we have
	$$S_iT_0S_j=0,\quad\text{if}\,\, i+j \geq2m-n-1 \,\,\text{and}\,\, \max\{i,j\}>\mbox{$m-\frac{n}{2}$},$$
	and
	$$S_{m-\frac{n}{2}}T_0S_j=S_jT_0S_{m-\frac{n}{2}}=0,\quad  \text{if}\,\, j\geq \mbox{$m-\frac{n+1}{2}$},$$
	therefore \eqref{equ3.16} follows.
\end{proof}
We also need the following abstract Feshbach formula.
\begin{lemma}[{\cite[Lemma 2.3]{JN}}]\label{lemma-Feshbach formula}
	Let $\mathbb{A^\pm}$ be an operator matrix on the orthogaonal direct sum of Hilbert spaces $\mathcal{H}_1\oplus\mathcal{H}_2:$
	\begin{equation}
		\mathbb{A^\pm}=\begin{pmatrix}
			a_{11} & a_{12}\\
			a_{21} & a_{22}
		\end{pmatrix},\ \ \ a_{ij}:\ \mathcal{H}_j\rightarrow\mathcal{H}_i,
	\end{equation}
	where $a_{11},a_{22}$ are closed and $a_{12},a_{21}$ are bounded. Assume that $a_{11}$ has a bounded inverse, then $\mathbb{A^\pm}$ has a bounded inverse if and only if the operator
	\begin{equation}
		d:= a_{22}-a_{21}a_{11}^{-1}a_{12}
	\end{equation}
	has a bounded inverse.  Furthermore, if $d$ has a bounded inverse, we then have 
	\begin{equation}\label{eq-inverse-feshbach}
		\left(\mathbb{A^\pm} \right)^{-1}=
		\begin{pmatrix}
			a_{11}^{-1}a_{12}d^{-1}a_{21}a_{11}^{-1}+a_{11}^{-1} & -a_{11}^{-1}a_{12}d^{-1}\\
			-d^{-1}a_{21}a_{11}^{-1} & d^{-1}
		\end{pmatrix}.
	\end{equation}
\end{lemma}

We outline the \textbf{proof strategy} for Theorem \ref{thm3.4}  as follows. Under Assumption \ref{assum1}, we know from Proposition \ref{prop3.1} that $\sum\limits_{j\in J_{\k}}Q_j=I$. This leads to the definition of the isomorphism (for  $\lambda>0$):
\begin{equation}\label{eq:def fo B_lambda}
B_{\lambda} = (\lambda^{-j}Q_j)_{j\in J_{\k}} : \bigoplus_{j\in J_{\k}} Q_jL^2 \to L^2, \quad (f_j)_{j\in J_{\k}} \mapsto \sum_{j\in J_{\k}} \lambda^{-j}Q_jf_j.
\end{equation}
In matrix form, $B_\lambda$ and its adjoint $B_\lambda^*$ can be written as 
$$
B_{\lambda}= \left(Q_0, \lambda^{-1}Q_1, \ldots, \lambda^{-j_{\k}}Q_{j_{\k}} \right),   \qquad 
B_{\lambda}^*= \begin{pmatrix} Q_0 \\ \lambda^{-1}Q_1\\ \vdots \\ \lambda^{-j_{\k}}Q_{j_{\k}} \end{pmatrix},
\quad \text{if $n<2m$},
$$
and 
$$
B_{\lambda}^*= \left( \lambda^{\frac n2-m}Q_{m-\frac n2}Q_0, Q_0, \ldots, \lambda^{-j_{\k}}Q_{j_{\k}} \right),  \qquad 
B_{\lambda}= \begin{pmatrix} \lambda^{\frac n2-m}Q_{m-\frac n2} \\ Q_0\\ \vdots \\ \lambda^{-j_{\k}}Q_{j_{\k}} \end{pmatrix},
\quad \text{if $2m<n< 4m$},
$$
where $j_{\k}=\max J_{\k}$. The isomorphism of $B_{\lambda}$ yields the following equivalence:  $M^{\pm}(\lambda)$ is invertible on $L^2$ if and only if $B_{\lambda}^*M^{\pm}(\lambda)B_{\lambda}$ is invertible on $\bigoplus_{j\in J_{\k}} Q_jL^2$. In such case, the inverses are given by
\begin{equation}\label{eq-inverse-A-to-M}
	\left(M^{\pm}(\lambda)\right)^{-1}=B_{\lambda} \left(B_{\lambda}^*M^{\pm}(\lambda)B_{\lambda}\right)^{-1}B_{\lambda}^*.
\end{equation}
The key of our method lies in reformulating the  original problem of inverting $M^{\pm}(\lambda)$ as an investigation of the invertibility and asymptotic behavior of the operator block matrices $B_{\lambda}^*M^{\pm}(\lambda)B_{\lambda}$ when $\lambda$ is around zero energy.  This framework allows us to systematically analyze  $\left(M^{\pm}(\lambda)\right)^{-1}$, and our analysis is naturally divided into two distinct scenarios: the case  $1 \leq n < 2m$, and the case $2m < n < 4m$.

\subsubsection{\bf{The scenario $1 \leq n < 2m$.}}\

For a zero resonance of the $\k$-th kind, we apply \eqref{equ2.2.2} with the parameter
 $$\theta=\min\{2m-n+2\mathbf{k}+1,\, 4m-n+1\},$$ 
to yield the following expansions for $M^{\pm}(\lambda)$ as sums of bounded operators in $L^2$: \\
$\bullet$ if $\k=0$ (regular case),
\begin{equation}\label{equ3.52-0}
M^{\pm}(\lambda)=\sum\limits_{j=0}^{ m-\frac{n+1}{2}}\lambda^{n-2m+2j}a_j^\pm vG_{2j}v+T_0 
+vr_{2m-n+1}^\pm(\lambda)v;
\end{equation}
$\bullet$ if $1\le \k\le m_n-1=m-1$,
\begin{equation}\label{equ3.52-1}
M^{\pm}(\lambda)=\sum\limits_{j=0}^{ m-\frac{n+1}{2}}\lambda^{n-2m+2j}a_j^\pm vG_{2j}v+T_0+\sum\limits_{j=m-\frac{n-1}{2}}^{m-\frac{n+1}{2}+\k}\lambda^{n-2m+2j}a_j^\pm vG_{2j}v 
+vr_{2m-n+2\mathbf{k}+1}^\pm(\lambda)v;
\end{equation}
$\bullet$ if $\k= m$ or $m+1$,
\begin{equation}\label{equ3.52-2}
\begin{aligned}
	M^{\pm}(\lambda)=&\sum\limits_{j=0}^{ m-\frac{n+1}{2}}\lambda^{n-2m+2j}a_j^\pm vG_{2j}v+T_0 \\
	&+\sum_{j=m-\frac{n-1}{2}}^{2m- \frac{n+1}{2}}\lambda^{n-2m+2j}a_j^\pm vG_{2j}v+ \lambda^{2m}b_1vG_{4m-n}v+vr_{4m-n+1}^\pm(\lambda)v.	
\end{aligned}
\end{equation}
Here, $G_{j}$ and $T_0$ are defined in \eqref{eq:def of G_j} and \eqref{equ3.5}; the $\lambda$-dependent operators $r_{\theta}^{\pm}(\lambda)$ have integral kernels $r_{\theta}^{\pm}(\lambda)(x-y)$. We note that the $L^2$-boundedness of all terms in the above expansions follows from the decay assumption \eqref{eq:decay assump} combined with the Hilbert-Schmidt property.\footnote{Throughout this paper, our decay assumption \eqref{eq:decay assump} stems fundamentally from this consideration.} Moreover,  it follows from  \eqref{equ2.3}  that	
\begin{equation}\label{equ3.53}
	vr_{\theta}^\pm(\lambda)v\in \mathfrak{S}_{\frac{n+1}{2}}^{n-2m+\theta}\big((0,1)\big).
\end{equation}\

\noindent {\bf Case 1: zero is a regular point ($\k=0$).}

By Lemma \ref{prop3.2} and the expansions \eqref{equ3.52-0}, for any $i,j\in J_{0}=\{0,\cdots, m-\frac{n+1}{2}, m-\frac{n}{2}\}$, we obtain the following.
\begin{itemize}
	\item If  $i,j<m-\frac n2$, then
\begin{equation*}
\begin{split}
    &\lambda^{-(i+j)}Q_i M^{\pm}(\lambda)Q_j\\
    =&\sum\limits_{s=[\frac{i+j+1}{2}]}^{ m-\frac{n+1}{2}}\lambda^{n-2m+2s-(i+j)}a_s^\pm Q_ivG_{2s}vQ_j+
	\lambda^{-(i+j)}\left( Q_iT_0Q_j +Q_ivr_{2m-n+1}^\pm(\lambda)vQ_j\right).
\end{split}
\end{equation*} 
\item If $i< m-\frac{n}{2}$, $j=m-\frac n2$, or if $i= m-\frac{n}{2}$,  $j<m-\frac n2$, then
\begin{equation*}
    \begin{split}
        &\lambda^{-(i+j)}Q_i M^{\pm}(\lambda)Q_j\\
        =&\sum\limits_{s=[\frac{i+j+\frac 12+1}{2}]}^{ m-\frac{n+1}{2}}\lambda^{n-2m+2s-(i+j)}a_s^\pm Q_ivG_{2s}vQ_j+
	\lambda^{-(i+j)}\left( Q_iT_0Q_j +Q_ivr_{2m-n+1}^\pm(\lambda)vQ_j\right).
    \end{split}
\end{equation*} 
\item If $i=j=m-\frac n2$, then
\begin{align*}
	\lambda^{-(i+j)}Q_i M^{\pm}(\lambda)Q_j&=\lambda^{n-2m}Q_iT_0Q_j 
	+\lambda^{n-2m}Q_ivr_{2m-n+1}^\pm(\lambda)vQ_j.
\end{align*} 
\end{itemize} 
Let $$\mathbb{A^{\pm}}:=\Big(a_{i,j}^\pm(\lambda)\Big)_{i,j\in J_{0}}=\lambda^{2m-n}B_{\lambda}^*M^{\pm}(\lambda)B_{\lambda}.$$
Since $\k=0$, using the above identities, we can write $\mathbb{A^\pm}$ in the following form
\begin{equation}\label{equ3.63-0}
	\mathbb{A^{\pm}}=
		D^{\pm}+(r_{i,j}^\pm(\lambda))_{i,j\in J_{0}},
\end{equation}
where  $D^\pm=(d_{i,j}^\pm)_{i,j\in J_{0}}$ are given by
\begin{equation}\label{equ3.64-0}
	d_{i,j}^\pm=\begin{cases}
		a_{\frac{i+j}{2}}^\pm Q_ivG_{i+j}vQ_j,\quad&\mbox{if}\,\,i+j\,\,\mbox{is even},\\
		Q_iT_0Q_j,&\mbox{if}\,\,i=j=m-\frac n2,\\
		0,&\mbox{else},
	\end{cases}
\end{equation}
and $r_{i,j}^\pm(\lambda)$ satisfy
\begin{equation}\label{equ3.65.55-0}
	r_{i,j}^\pm(\lambda)\in
	\begin{cases}
		\mathfrak{S}_{\frac{n+1}{2}}^{1}\big((0,1)\big), &\text{if}\,\,i=j=m-\frac{n}{2},\\
		\mathfrak{S}_{\frac{n+1}{2}}^{\frac12}\big((0,1)\big),  &\text{else}.
	\end{cases}
\end{equation}
Note that the matrices $D^\pm=(d_{i,j}^\pm)_{i,j\in J_{0}}$ have the following block structure
$$D^\pm=\begin{pmatrix}
	\left(d_{i,j}^\pm\right)_{0\le i,j<m-\frac n2}  & 0\\[0.2cm]
	0 & Q_{m-\frac{n}{2}}T_0Q_{m-\frac{n}{2}}
\end{pmatrix} = \begin{pmatrix}
D_{00}^{\pm }  & 0\\[0.2cm]
0 & Q_{m-\frac{n}{2}}T_0Q_{m-\frac{n}{2}}
\end{pmatrix} . $$
Using perturbation arguments, we need to show that the matrices $D^\pm = (d_{i,j}^\pm)_{i,j\in J_{0}}$ are invertible on $\bigoplus_{j\in J_{0}} Q_jL^2$. Note that  by \eqref{equ2.7}, we have
\begin{equation*}
	d_{i,j}=(-\i)^{i+j}(-1)^j e^{\frac{\pm \i\pi}{2m}(n-2m+i+j)}d_{i,j}^\pm,
\end{equation*}
where $d_{i,j}$ is given in \eqref{equ3.21}, so it follows that
\begin{equation*}
	U_0^\pm D^\pm U_0^\pm U_1=
	\begin{pmatrix}
		\textbf{D}_{00}  & 0\\[0.2cm]
		0 & Q_{m-\frac{n}{2}}T_0Q_{m-\frac{n}{2}}
	\end{pmatrix} 
	=D,
\end{equation*}
where $\textbf{D}_{00}$ is defined in \eqref{eq:def of bfD_00} and
$$
U_0^\pm=\text{diag}\{e^{\pm\frac{\i\pi}{2m}(j+\frac{n}{2}-m)}(-\i)^jQ_j\}_{j\in J_{0}}, \qquad U_1=\text{diag}\{(-1)^jQ_j\}_{j\in J_{0}}.$$ 
In the above, we use the fact
$(-\i)^{2m-n}(-1)^{m-\frac n2}=1$ on the main branch.
Thus $D^\pm$ are invertible if and only if $D$ is invertible, in which case
\begin{equation}\label{eq:D to D^pm}
 \left( D^\pm\right)^{-1} =   U_1 U_0^\pm D^{-1} U_0^\pm.
\end{equation}
For the invertibility of $D$, we observe that the operator $Q_{m-\frac{n}{2}}T_0Q_{m-\frac{n}{2}}$ is invertible on $Q_{m-\frac{n}{2}}L^2$ by the Riesz-Schauder theory. Meanwhile,  Lemma \ref{lem3.3} shows that $\textbf{D}_{00}$ is invertible on $\bigoplus_{j=0}^{m-\frac{n+1}{2}} Q_jL^2$. Thus, $D$ is invertible on $\bigoplus_{j\in J_{0}} Q_jL^2$.  

Now we have proved the existence of $ \left( D^\pm\right)^{-1}$. So by \eqref{equ3.63-0} and \eqref{equ3.65.55-0}, there exists a small $\lambda_0$ such that $\mathbb{A}^{\pm}$ are invertible for $0<\lambda<\lambda_0$, and moreover, $\left( \mathbb{A}^{\pm}\right)^{-1}$ can be expressed as 
\begin{equation*}
\begin{aligned}
	(\mathbb{A^{\pm}})^{-1} &= \left(D^\pm + (r_{i,j}^\pm(\lambda))\right)^{-1} \\
	&= (D^\pm)^{-1}\sum_{l=0}^\infty\left(-\big(r_{ij}^\pm(\lambda)\big)_{i,j\in J_{0}}(D^\pm)^{-1}\right)^l \\
	&= (D^\pm)^{-1} + \Gamma^{\pm}(\lambda).	
\end{aligned}
\end{equation*} 
According to the identity \eqref{eq-inverse-A-to-M}, one can obtain 
\begin{equation}\label{eq-inverse-A-to-M-1}
	\left(M^{\pm}(\lambda)\right)^{-1}=\lambda^{2m-n}B_{\lambda}\left(\mathbb{A}^{\pm}\right)^{-1} B_{\lambda}^*,
\end{equation}
which implies the results in \eqref{equ3.48}--\eqref{equ3.47.1}, if we define 
 $$(D^\pm)^{-1}=\big(M_{i,j}^{\pm}\big)_{i,j\in J_{0}},\quad \quad \Gamma^{\pm}(\lambda)=\big(\Gamma_{i,j}^{\pm}(\lambda)\big)_{i,j\in J_{0}}. $$
We remark that the estimate 
\[
\Gamma^{\pm}_{m-\frac{n}{2},m-\frac{n}{2}} \in \mathfrak{S}_{\frac{n+1}{2}}^{1}\big((0,\lambda_0)\big)
\]
follows from two facts: 
\begin{enumerate}
	\item $r^{\pm}_{2m-\frac{n}{2},2m-\frac{n}{2}}(\lambda) \in \mathfrak{S}_{\frac{n+1}{2}}^{1}\big((0,1)\big)$.
	\item All elements of the series 
	\[
	(D^\pm)^{-1}\sum_{l=2}^\infty \left(-\big(r_{ij}^\pm(\lambda)\big)_{i,j\in J_{0}}(D^\pm)^{-1}\right)^l
	\] 
	belong to $\mathfrak{S}_{\frac{n+1}{2}}^{1}\big((0,\lambda_0)\big)$.
\end{enumerate}
This completes the proof for the case $\k = 0$ when $1 \leq n < 2m - 1$. \\

\noindent {\bf Case 2: zero is the $\k$-kind of resonance with $1\le \k\le m_n=m$.}\

Denote 
$$a_{i,j}^\pm(\lambda)=\lambda^{2m-n-i-j}Q_iM^{\pm}(\lambda)Q_j,\qquad \mathbb{A^{\pm}}=\Big(a_{i,j}^\pm(\lambda)\Big)_{i,j\in J_{\k}}=\lambda^{2m-n}B_{\lambda}^*M^{\pm}(\lambda)B_{\lambda}.$$
In this case, for any $i,j\in J_{\k}$ where 
$$J_{\k}=\{0,\cdots,\  m-\frac{n+1}{2},\   m-\frac{n}{2},\  m-\frac{n-1}{2}, \ \cdots,\    m-\frac{n-1}{2}+\k-1\},$$ 
by Lemma \ref{prop3.2} and the expansions \eqref{equ3.52-1} and \eqref{equ3.52-2},   we obtain when $1\le \k<m_n=m$ that
\begin{equation*}\label{equ3.53.111-1}
	\begin{split}
		a_{i,j}^\pm(\lambda)&=\sum\limits_{\tau_{i,j}\le l\le \frac{\theta_0-1}{2}}a_j^\pm\lambda^{2l-i-j}Q_ivG_{2l}vQ_j+\lambda^{2m-n-i-j}Q_{i}T_0Q_j+\lambda^{2m-n-i-j}Q_{i}vr_{\theta_0}^\pm(\lambda)vQ_{j},
	\end{split}
\end{equation*}
with $\theta_0=2m-n+2\k+1$, and when $\k=m_n=m$ that
\begin{equation*}\label{equ3.53.111-2}
	\begin{split}
	a_{i,j}^\pm(\lambda)=&\sum\limits_{\tau_{i,j}\le l\le \frac{\theta_0-1}{2}}a_j^\pm\lambda^{2l-i-j}Q_ivG_{2l}vQ_j+\lambda^{2m-n-i-j}Q_{i}T_0Q_j
	    \\
		&+b_1\lambda^{4m-n-i-j}Q_{i}vG_{4m-n}vQ_{j}+\lambda^{2m-n-i-j}Q_{i}vr_{4m-n+1}^\pm(\lambda)vQ_{j},
	\end{split}
\end{equation*}
where $\tau_{i,j}=[\frac{[i+\frac12]+[j+\frac12]+1}{2}]$.
 Note by \eqref{equ3.16} that
\begin{equation*}
	Q_iT_0Q_j=0,\quad \text{if~} \mbox{$i+j>2m-n$},
\end{equation*} 
as in Case 1, we can write $\mathbb{A^\pm}$ in the following form
\begin{equation*}\label{equ3.63-1}
		\mathbb{A^\pm}=D^{\pm}+(r_{i,j}^\pm(\lambda))_{i,j\in J_{\k}},
\end{equation*}
where  $D^\pm=(d_{i,j}^\pm)_{i,j\in J_{\k}}$ are given by
\begin{equation}\label{equ3.64}
	d_{i,j}^\pm=\begin{cases}
		a_{\frac{i+j}{2}}^\pm Q_ivG_{i+j}vQ_j,\quad&\mbox{if $i,j\in J_{\k}\setminus\{m-\frac n2\}$ and $i+j$ is even},\\
		Q_iT_0Q_j,&\mbox{if}\,\,i+j=2m-n,\\
		0,&\mbox{else},
	\end{cases}
\end{equation}
and by \eqref{equ3.53}, the operators $r_{i,j}^\pm(\lambda)$ satisfy
\begin{equation}\label{equ3.65.55}
	r_{i,j}^\pm(\lambda)\in
	\begin{cases}
	\mathfrak{S}_{\frac{n+1}{2}}^{1}\big((0,1)\big), &\text{if}\,\,i=j=m-\frac{n}{2},\\
\mathfrak{S}_{\frac{n+1}{2}}^{\frac12}\big((0,1)\big),  &\text{else}.
	\end{cases}
\end{equation}
By definition, the operator matrices $D^\pm=(d_{i,j}^\pm)_{i,j\in J_{0}}$ have the following block structure
\begin{equation}\label{eq:stru of D-0}
D^\pm=\begin{pmatrix}
	\left(d_{i,j}^\pm\right)_{i,j\in J'_{\k}}  & 0 & \left(d_{i,j}^\pm\right)_{i\in J'_{\k},~ j\in J''_{\k}} \\[0.2cm]
	0 & Q_{m-\frac{n}{2}}T_0Q_{m-\frac{n}{2}} &0 \\[0.2cm]
	\left(d_{i,j}^\pm\right)_{i\in J''_{\k},~ j\in J'_{\k}} & 0 & \left(d_{i,j}^\pm\right)_{i,j\in J''_{\k}}
\end{pmatrix}: = \begin{pmatrix}
	D_{00}^{\pm }  &0  &D_{01}^{\pm }\\[0.2cm]
	0 & Q_{m-\frac{n}{2}}T_0Q_{m-\frac{n}{2}} & 0\\[0.2cm]
	D_{10}^{\pm } & 0 & D_{11}^{\pm }
\end{pmatrix} , 
\end{equation}
where $J'_{\k}$ and $J''_{\k}$ are defined in \eqref{equ3.12.11}. By \eqref{equ2.7}, we have
\begin{equation}\label{equ3.58}
\begin{split}
d_{i,j}=(-\i)^{i+j}(-1)^j e^{\frac{\pm \i\pi}{2m}(n-2m+i+j)}d_{i,j}^\pm, 
\end{split}
\end{equation}
where $$d_{i,j}=
\begin{cases}
	(-\i)^{i+j}(-1)^j a_{\frac{i+j}{2}} Q_ivG_{i+j}vQ_j,\quad&\mbox{if $i,j\in J_{\k}\setminus\{m-\frac n2\}$ and $i+j$ is even},\\
	Q_iT_0Q_j,&\mbox{if}\,\,i+j=2m-n,\\
	0,&\mbox{else},
\end{cases}	$$
so it follows  that
\begin{equation}\label{equ3.64'}
	U_0^\pm D^\pm U_0^\pm U_1=D,
\end{equation}
where $U_0^\pm=\text{diag}\{e^{\pm\frac{\i\pi}{2m}(j+\frac{n}{2}-m)}(-\i)^jQ_j\}_{j\in J_{\k}}$, $U_1=\text{diag}\{(-1)^jQ_j\}_{j\in J_{\k}}$, and 
\begin{equation}\label{eq-matrix-2}
	D=\begin{pmatrix}
		\textbf{D}_{00} & 0 & D_{01}\\[0.2cm]
		0 & Q_{m-\frac{n}{2}}T_0Q_{m-\frac{n}{2}} & 0\\[0.2cm]
		D_{10} & 0 &\textbf{D}_{11}
	\end{pmatrix}.
\end{equation}
Here,  $\textbf{D}_{00}$ is defined in \eqref{eq:def of bfD_00} and  
$$D_{01}=\left(d_{i,j}\right)_{i\in J'_{\k},~ j\in J''_{\k}},\quad D_{10}=\left(d_{i,j}\right)_{i\in J''_{\k},~ j\in J'_{\k}},\quad \textbf{D}_{11}=\left(d_{i,j}\right)_{i\in J''_{\k},~ j\in J''_{\k}}.  $$
Following the approach in Case 1, the identity \eqref{eq:D to D^pm} remains valid, and we consequently investigate the invertibility of $D$.

Since the operator $Q_{m-\frac{n}{2}}T_0Q_{m-\frac{n}{2}}$ is invertible, we consider the operator block matrix 
$$ \begin{pmatrix}
	\textbf{D}_{00}  & D_{01}\\[0.2cm]
	D_{10} &\textbf{D}_{11} 
\end{pmatrix} \quad \text{on $\bigoplus_{j\in J_{\k}\setminus \{m-\frac n2\}} Q_jL^2$} . $$
Note that $\textbf{D}_{00}$ is invertible on $\bigoplus_{j\in J'_{\k}} Q_jL^2$ by  Lemma \ref{lem3.3}.
According to the abstract Fehsbach formula in Lemma~\ref{lemma-Feshbach formula}, $D$ is invertible on $\bigoplus_{j\in J_{\k}}Q_jL^2$ if and only if
\begin{equation*}\label{equ3.66}
	\mathbf{d}:=\mathbf{D}_{11}-D_{10} \textbf{D}_{00}^{-1}D_{01}=\mathbf{D}_{11}-D_{01}^* \textbf{D}_{00}^{-1}D_{01},
\end{equation*}
is invertible on $\bigoplus_{j\in J_\k''}Q_{j}L^2$, where we used the fact $D_{01}^*=D_{10}$.

To prove the invertibility of $\mathbf{d}$, it suffices to prove that $\mathbf{d}$ is injective on $\bigoplus_{j\in J_\k''}Q_{j}L^2$. As in \eqref{eq-decom-for-Qj}, we decompose
$$Q_j=Q_{j,0}+Q_{j,1},$$
where
\begin{equation*}
	Q_{j,0}L^2=Q_jL^2\bigcap \{x^\alpha v;\,\,|\alpha|\le j\}^\perp.
\end{equation*}
Since $Q_i v G_{i+j} v Q_{j,0} = Q_{i,0} v G_{i+j} v Q_j = 0$ for all $i \in J_\k'$ and $j \in J_\k''$ with $i+j$ even,
we derive
\begin{equation}\label{equ3.66'}
	\mathbf{d} = Q \mathbf{D}_{11} Q - D_{01}^* \mathbf{D}_{00}^{-1} D_{01}
	= Q' \mathbf{D}_{11} Q' - D_{01}^* \mathbf{D}_{00}^{-1} D_{01},
\end{equation}
where $Q'=\text{diag}\{Q_{j,1}\}_{j\in J_\k''}$.
 Assume that $f=(f_j)_{j\in J_\k''}\in  \bigoplus_{j\in J_\k''}Q_{j}L^2$ and
\begin{equation}\label{equ3.68-1}
	0=\langle \mathbf{d}f,f\rangle=\langle Q'\mathbf{D}_{11}Q'f,f\rangle-\langle D_{01}^\ast \mathbf{D}_{00}^{-1}D_{01}f,f\rangle.
\end{equation}
By Lemma \ref{lem3.3}, one has that $Q'\mathbf{D}_{11}Q'$ is strictly negative on $\bigoplus_{j\in J_\k''}Q_{j,1}L^2$ and $\mathbf{D}_{00}^{-1}$ is strictly positive $\bigoplus_{j\in J_\k'}Q_{j}L^2$. This fact,  together with \eqref{equ3.68-1}, shows that 
\begin{equation*}
	Q'f=0\quad\mbox{ and }\quad D_{01}f=0.
\end{equation*}
The fact $Q'f=0$ implies $f_j\in Q_{j,0}L^2$, i.e.,  $f_j\in \{x^\alpha v;\,\,|\alpha|\le j\}^\perp$, $j\in J_\k''$. Thus
\begin{equation*}
	D_{01}f=\begin{pmatrix}
		(-\i)^{2m-n}(-1)^{m-\frac{n+1}{2}+\k}Q_{m-\frac{n-1}{2}-\k}T_0Q_{m-\frac{n+1}{2}+\k}f_{m-\frac{n+1}{2}+\k}\\[0.2cm]
		\vdots\\[0.2cm]
		(-\i)^{2m-n}(-1)^{m-\frac{n-1}{2}}Q_{m-\frac{n+1}{2}}T_0Q_{m-\frac{n-1}{2}}f_{m-\frac{n-1}{2}}
	\end{pmatrix},
\end{equation*}
which implies
\begin{align*}\label{equ.Qj}
	Q_{2m-n-j}T_0f_{j}=0.
\end{align*}
Combining this with the definition of $Q_{j}$ for $j \in J''_{\k}$ in \eqref{eq-Q_j-odd}, we conclude that 
$$ f_j \in S_{j}L^2, $$
which yields $f_j=0$ for all $j \in J''_{\k}$. Consequently, the operator $\mathbf{d}$ is injective and therefore invertible.

We have shown that $D$ is invertible, and consequently, so are $D^{\pm}$. Using the Neumann series expansion, there exists a small $\lambda_0$ such that for any $\lambda\in (0,\, \lambda_0)$, 
\begin{equation}\label{eq:inverse of A-for k}
\begin{aligned}
(\mathbb{A^{\pm}})^{-1} &= \left(D^\pm + (r_{i,j}^\pm(\lambda))\right)^{-1} \\
&= (D^\pm)^{-1}\sum_{l=0}^\infty\left(-\big(r_{ij}^\pm(\lambda)\big)_{i,j\in J_{\k}}(D^\pm)^{-1}\right)^l \\
&= (D^\pm)^{-1} + \Gamma^{\pm}(\lambda).	
\end{aligned}
\end{equation}
Applying \eqref{eq:inverse of A-for k} together with the estimates \eqref{equ3.65.55} and the identity \eqref{eq-inverse-A-to-M-1}, we obtain all results in Theorem~\ref{thm3.4} for the case $1 \leq \k \leq m_n$ in dimensions $n < 2m$, if we define
$$(D^\pm)^{-1}=\big(M_{i,j}^{\pm}\big)_{i,j\in J_{\k}},\quad \quad \Gamma^{\pm}(\lambda)=\big(\Gamma_{i,j}^{\pm}(\lambda)\big)_{i,j\in J_{\k}}. $$ \\

\noindent {\bf Case 3: zero is an eigenvalue ($\k= m_n+1=m+1$).}\

We also denote 
$$a_{i,j}^\pm(\lambda)=\lambda^{2m-n-i-j}Q_iM^{\pm}(\lambda)Q_j,\qquad \mathbb{A^{\pm}}=\Big(a_{i,j}^\pm(\lambda)\Big)_{i,j\in J_{m+1}}=\lambda^{2m-n}B_{\lambda}^*M^{\pm}(\lambda)B_{\lambda}.$$
In this case, by Lemma \ref{prop3.2} and the expansions \eqref{equ3.52-2},  for any $i,j\in J_{m+1}$, we obtain that 
\begin{equation*}\label{equ3.53.111-3}
	\begin{split}
		a_{i,j}^\pm(\lambda)=&\sum\limits_{\tau_{i,j}\le l\le \frac{\theta_0-1}{2}}a_j^\pm\lambda^{2l-i-j}Q_ivG_{2l}vQ_j+\lambda^{2m-n-i-j}Q_{i}T_0Q_j
		\\
		&+b_1\lambda^{4m-n-i-j}Q_{i}vG_{4m-n}vQ_{j}+\lambda^{2m-n-i-j}Q_{i}vr_{\theta_0}^\pm(\lambda)vQ_{j},
	\end{split}
\end{equation*}
where $\tau_{i,j}=[\frac{[i+\frac12]+[j+\frac12]+1}{2}]$ and $\theta_0=4m-n+1$. As in Case 2, we can write $\mathbb{A^\pm}$ in the following form
\begin{equation*}\label{equ3.63-1-0}
	\mathbb{A^\pm}=D^{\pm}+(r_{i,j}^\pm(\lambda))_{i,j\in J_{\k}},
\end{equation*}
where  $D^\pm=(d_{i,j}^\pm)_{i,j\in J_{m+1}}$ are given by
\begin{equation}\label{equ3.64-3}
	d_{i,j}^\pm=\begin{cases}
		a_{\frac{i+j}{2}}^\pm Q_ivG_{i+j}vQ_j,\quad&\mbox{if $i,j\in J_{m+1}\setminus\{m-\frac n2, 2m-\frac n2\}$ and $i+j$ is even},\\
		b_1 Q_ivG_{4m-n}vQ_j,\quad&\mbox{if}\,i=j=2m-\frac n2,\\
		Q_iT_0Q_j,&\mbox{if}\,\,i+j=2m-n,\\
		0,&\mbox{else},
	\end{cases}
\end{equation}
and by \eqref{equ3.53}, the operators $r_{i,j}^\pm(\lambda)$ satisfy
\begin{equation}\label{equ3.65.55-3}
	r_{i,j}^\pm(\lambda)\in
	\begin{cases}
		\mathfrak{S}_{\frac{n+1}{2}}^{1}\big((0,1)\big), &\text{if}\,\,i=j=m-\frac{n}{2},\\
		\mathfrak{S}_{\frac{n+1}{2}}^{1}\big((0,1)\big),   &\text{if}\,\,i=j=2m-\frac{n}{2},\\
		\mathfrak{S}_{\frac{n+1}{2}}^{\frac12}\big((0,1)\big),  &\text{else}.
	\end{cases}
\end{equation}

When zero is an eigenvalue, the block operator matrices $D^\pm$ have the following structure:
\begin{equation}\label{eq-matrix-1}
	D^\pm = \begin{pmatrix}
		D_{m}^\pm & 0 \\[0.2cm]
		0 & b_1Q_{2m-\frac{n}{2}}vG_{4m-n}vQ_{2m-\frac{n}{2}}
	\end{pmatrix},
\end{equation}
where $D_{m}^\pm$ are precisely the matrices $D^\pm$ appearing in Case 2 when $\k=m$. We have obtained that $D_{m}^\pm$ are invertible in Case 2. We next prove the invertibility of $D^\pm$ by showing that $b_1Q_{2m-\frac{n}{2}}vG_{4m-n}vQ_{2m-\frac{n}{2}}$ is invertible on $Q_{2m-\frac{n}{2}}L^2$. 
By the expansion \eqref{equ2.2}, for any $\psi\in Q_{2m-\frac{n}{2}}L^2$, we have
\begin{equation}\label{eq:invertility of b_1 term}
\begin{aligned}
\Big\langle b_1Q_{2m-\frac{n}{2}}vG_{4m-n}vQ_{2m-\frac{n}{2}} \psi,\, \psi \Big\rangle
&=\lim_{\lambda\downarrow 0} \lambda^{-2m} \Big\langle v\left( R(-\lambda^{2m})-b_0G_{2m-n}\right)v  Q_{2m-\frac{n}{2}}\psi,\, Q_{2m-\frac{n}{2}}\psi \Big\rangle \\
&=\lim_{\lambda\downarrow 0} \lambda^{-2m} \Big\langle v\left[\left( (-\Delta)^{m}+\lambda^{2m}\right)^{-1} -(-\Delta)^{-m}\right]v  Q_{2m-\frac{n}{2}}\psi,\, Q_{2m-\frac{n}{2}}\psi \Big\rangle \\
&=\left\|(-\Delta)^{-m} v\psi\right\|_{L^2}^2. 
\end{aligned}
\end{equation} 
Thus, $b_1Q_{2m-\frac{n}{2}}vG_{4m-n}vQ_{2m-\frac{n}{2}} \psi=0$ implies $(-\Delta)^{-m} v\psi=0$ and
 $$\psi=-vb_0G_{2m-n}v\psi=-v(-\Delta)^{-m} v\psi=0.$$
This yields the invertbility of $b_1Q_{2m-\frac{n}{2}}vG_{4m-n}vQ_{2m-\frac{n}{2}}$ on $Q_{2m-\frac{n}{2}}L^2$, and therefore, $D^\pm$ are invertible.

We can now apply the Neumann series expansion \eqref{eq:inverse of A-for k} with $\k=m+1$, \eqref{equ3.65.55-3}, and the identity \eqref{eq-inverse-A-to-M-1} to establish all results for $\k=m+1$ when $\lambda\in (0, \lambda_0)$ with a sufficiently small $\lambda_0>0$.

This completes the proof for $1\le n<2m$.

\subsubsection{\bf{The scenario where $2m< n < 4m$.}}\

In these dimensions, when zero is the  $\k$-th kind of resonance, by using \eqref{equ2.2.2}
$M^{\pm}(\lambda)$ admits the following expansions as bounded operators in $L^2$.
\begin{itemize}
    \item If $\k=0$, then
\begin{equation}\label{equ3.52-0-1}
	M^{\pm}(\lambda)=T_0 
	+\lambda^{n-2m}a_0^\pm vG_{0}v+vr_{2}^\pm(\lambda)v.
\end{equation}

\item If $1\le \k\le m_n-1=2m-\frac{n+1}{2}$, then
\begin{equation}\label{equ3.52-1-1}
	M^{\pm}(\lambda)=T_0+\sum\limits_{j=0}^{\k-1}\lambda^{n-2m+2j}a_j^\pm vG_{2j}v 
	+vr_{2\mathbf{k}}^\pm(\lambda)v.
\end{equation}

\item If $\k= m$ or $m+1$, then
\begin{equation}\label{equ3.52-2-1}
	\begin{aligned}
		M^{\pm}(\lambda)=T_0+ \sum_{j=0}^{2m- \frac{n+1}{2}}\lambda^{n-2m+2j}a_j^\pm vG_{2j}v+ \lambda^{2m}b_1vG_{4m-n}v+vr_{4m-n+1}^\pm(\lambda)v.	
	\end{aligned}
\end{equation}
\end{itemize}
Here, the $\lambda$-dependent operators $r_{\theta}^{\pm}(\lambda)$ have integral kernels $r_{\theta}^{\pm}(\lambda)(x-y)$, and all terms in the expansions \eqref{equ3.52-0-1}--\eqref{equ3.52-2-1} are $L^2$ bounded by the decay assumption \eqref{eq:decay assump}. Moreover, it follows from \eqref{equ2.3} that $r_{\theta}^{\pm}(\lambda)$ satisfy \eqref{equ3.53}.

With the identity \eqref{eq-inverse-A-to-M}, we need to study the invertibility of $B_{\lambda}^*M^{\pm}(\lambda)B_{\lambda}$ and the asymptotic behavior of their inverses as $\lambda\to 0$. The proof is divided into three cases according to the zero energy resonance kinds. \\
	
\noindent {\bf Case 1: zero is a regular point ($\k=0$).}\

In this case, note that $J_0=\{m-\frac n2\}$ by definition of $J_{0}$ in \eqref{equ3.12-1}, and $Q_{m-\frac n2}=I$, we then have by \eqref{equ3.52-0-1} that 
$$B_{\lambda}^*M^{\pm}(\lambda)B_{\lambda}=\lambda^{n-2m}\left(T_0 
+\lambda^{n-2m}a_0^\pm vG_{0}v+vr_{2}^\pm(\lambda)v \right).  $$
It is clear that $T_0 =Q_{m-\frac n2}T_0Q_{m-\frac n2}$ is invertible on $L^2$ by the Riesz-Schauder theory. Thus, by the Neumann series expansion, there exists a small $\lambda_0$ such that Theorem \ref{thm3.4} is valid for $\k=0$. We remark that the estimate \eqref{equ3.47.1} follows from
$$\lambda^{n-2m}a_0^\pm vG_{0}v+vr_{2}^\pm(\lambda)v \in \mathfrak{S}_{\frac{n+1}{2}}^{n-2m}\big((0,1)\big).$$\\
	
\noindent {\bf Case 2: zero is the $\k$-kind of resonance with $1\le \k\le m_n=2m-\frac{n-1}{2}$.}\

Denote 
$$a_{i,j}^\pm(\lambda)=\lambda^{2m-n-i-j}Q_iM^{\pm}(\lambda)Q_j,\qquad \mathbb{A^{\pm}}=\Big(a_{i,j}^\pm(\lambda)\Big)_{i,j\in J_{\k}}=\lambda^{2m-n}B_{\lambda}^*M^{\pm}(\lambda)B_{\lambda}.$$
In this case, by Lemma \ref{prop3.2} and the expansions \eqref{equ3.52-1-1} and \eqref{equ3.52-2-1},  for any $i,j\in J_{\k}$, we obtain that $a_{i,j}^\pm(\lambda)$ can be written as follows: when $1\le \k<m_n=2m-\frac{n-1}{2}$,
\begin{equation*}
	\begin{split}
a_{i,j}^\pm(\lambda)=\lambda^{2m-n-i-j}Q_{i}T_0Q_j+\sum\limits_{\tau_{i,j}\le l\le\k-1}
a_j^\pm\lambda^{2l-i-j}Q_ivG_{2l}vQ_j
+\lambda^{2m-n-i-j}Q_{i}vr_{2\k}^\pm(\lambda)vQ_{j}, 
	\end{split}
\end{equation*}
and when $\k=m_n=2m-\frac{n-1}{2}$,
\begin{equation*}
	\begin{split}
		a_{i,j}^\pm(\lambda)=&\lambda^{2m-n-i-j}Q_{i}T_0Q_j+\sum\limits_{\tau_{i,j}\le l\le 2m-\frac{n+1}{2}}a_j^\pm\lambda^{2l-i-j}Q_ivG_{2l}vQ_j
		\\
		&+b_1\lambda^{4m-n-i-j}Q_{i}vG_{4m-n}vQ_{j}+\lambda^{2m-n-i-j}Q_{i}vr_{4m-n+1}^\pm(\lambda)vQ_{j},
	\end{split}
\end{equation*}
where 
\begin{equation}\label{eq:def for tau_ij}
	\tau_{i,j}=[\frac{\delta(i)+\delta(j)+1}{2}], \qquad \delta(i)=\max\{0, [i+ 1/2]\}. 
\end{equation}
Note that 
$$Q_{i}T_0Q_j=0,\quad\text{if}\,\,(i,j)\neq (\mbox{$m-\frac n2, m-\frac n2$}),$$
as in the case $n<2m$, we can write $\mathbb{A^\pm}$ in the following form
\begin{equation*}
	\mathbb{A^\pm}=D^{\pm}+(r_{i,j}^\pm(\lambda))_{i,j\in J_{\k}},
\end{equation*}
where  $D^\pm=(d_{i,j}^\pm)_{i,j\in J_{\k}}$ are given by
\begin{equation}\label{equ3.64-1}
	d_{i,j}^\pm=\begin{cases}
		a_{\frac{i+j}{2}}^\pm Q_ivG_{i+j}vQ_j,\quad&\mbox{if $i,j\in J_{\k}\setminus\{m-\frac n2\}$ and $i+j$ is even},\\
		Q_iT_0Q_j,&\mbox{if $i=j=m-\frac n2$},\\
		0,&\mbox{else},
	\end{cases}
\end{equation}
and by \eqref{equ3.53}, the operators $r_{i,j}^\pm(\lambda)$ satisfy
\begin{equation*}
	r_{i,j}^\pm(\lambda)\in
	\begin{cases}
		\mathfrak{S}_{\frac{n+1}{2}}^{2m-n}\big((0,1)\big), &\text{if $i=j=m-\frac n2$},\\
		\mathfrak{S}_{\frac{n+1}{2}}^{m-\frac n2}\big((0,1)\big), &\text{if $i=m-\frac n2$ with $j\neq m-\frac n2$, or $i\neq m-\frac n2$ with $j= m-\frac n2$},\\
		\mathfrak{S}_{\frac{n+1}{2}}^{\frac12}\big((0,1)\big),  &\text{else}.
	\end{cases}
\end{equation*}
By definition, the operator matrices $D^\pm=(d_{i,j}^\pm)_{i,j\in J_{0}}$ have the following block structure:
$$D^\pm=\begin{pmatrix}
	 Q_{m-\frac{n}{2}}T_0Q_{m-\frac{n}{2}} &0 \\[0.2cm]
	 0 & \left(d_{i,j}^\pm\right)_{i,j\in J''_{\k}}
\end{pmatrix} = \begin{pmatrix}
 Q_{m-\frac{n}{2}}T_0Q_{m-\frac{n}{2}} & 0\\[0.2cm]
 0 & D_{11}^{\pm }
\end{pmatrix} , $$
where  $J''_{\k}$ is defined in \eqref{equ3.12.11}. It follows from \eqref{equ2.7} that
\begin{equation*}
		(-\i)^{i+j}(-1)^j a_{\frac{i+j}{2}} Q_ivG_{i+j}vQ_j=(-\i)^{i+j}(-1)^j e^{\frac{\pm \i\pi}{2m}(n-2m+i+j)}d_{i,j}^\pm :=d_{i,j}.
\end{equation*}
Let  $U_0^\pm=\text{diag}\{e^{\pm\frac{\i\pi}{2m}(j+\frac{n}{2}-m)}(-\i)^jQ_j\}_{j\in J_{\k}}$, $U_1=\text{diag}\{(-1)^jQ_j\}_{j\in J_{\k}}$.  It follows  that
\begin{equation*}
	U_0^\pm D^\pm U_0^\pm U_1=D,
\end{equation*}
where 
\begin{equation}\label{eq-matrix-2-1}
	D=\begin{pmatrix}
	    Q_{m-\frac{n}{2}}T_0Q_{m-\frac{n}{2}} & 0\\[0.2cm]
		0 &{\textbf{D}}_{11}
	\end{pmatrix}.
\end{equation}
Here,  $\textbf{D}_{11}=\left(d_{i,j}\right)_{i,j\in J''_{\k}}$ is defined in \eqref{equ3.21-2-1}. 
Following the approach in dimensions $n<2m$, the identity \eqref{eq:D to D^pm} remains valid, and we need to study the invertibility of $D$. The operator $Q_{m-\frac{n}{2}}T_0Q_{m-\frac{n}{2}}$ is obviously invertible and ${\textbf{D}}_{11}$ is invertible on $\bigoplus_{j\in {J}''_{\k}} Q_{j}L^2$ by Lemma \ref{lem3.3}, which implies that $D$ is invertible. Now, we can apply the identity \eqref{eq-inverse-A-to-M-1} and  the Neumann series expansion \eqref{eq:inverse of A-for k} to establish all results for $1\le \k\le m_n$ when $\lambda\in (0, \lambda_0)$ with a sufficiently small $\lambda_0>0$. \\

\noindent {\bf Case 3: zero is an eigenvalue ($\k= m_n+1=2m-\frac{n-1}{2}+1$).}\

We also denote 
$$a_{i,j}^\pm(\lambda)=\lambda^{2m-n-i-j}Q_iM^{\pm}(\lambda)Q_j,\qquad \mathbb{A^{\pm}}=\Big(a_{i,j}^\pm(\lambda)\Big)_{i,j\in J_{m+1}}=\lambda^{2m-n}B_{\lambda}^*M^{\pm}(\lambda)B_{\lambda}.$$
In this case, by Lemma \ref{prop3.2} and the expansion \eqref{equ3.52-2-1},  for any $i,j\in J_{m_n+1}$, we obtain
\begin{equation*}
	\begin{split}
		a_{i,j}^\pm(\lambda)=&\lambda^{2m-n-i-j}Q_{i}T_0Q_j+\sum\limits_{\tau_{i,j}\le l\le 2m-\frac{n+1}{2}}a_j^\pm\lambda^{2l-i-j}Q_ivG_{2l}vQ_j
		\\
		&+b_1\lambda^{4m-n-i-j}Q_{i}vG_{4m-n}vQ_{j}+\lambda^{2m-n-i-j}Q_{i}vr_{4m-n+1}^\pm(\lambda)vQ_{j}.
	\end{split}
\end{equation*}
where $\tau_{i,j}$ is given in \eqref{eq:def for tau_ij}. Then $\mathbb{A^\pm}$ can be written as the following form
\begin{equation*}
	\mathbb{A^\pm}=D^{\pm}+(r_{i,j}^\pm(\lambda))_{i,j\in J_{m_n+1}},
\end{equation*}
where  $D^\pm=(d_{i,j}^\pm)_{i,j\in J_{m_n+1}}$ have the structure
\begin{equation*}
	D^\pm = \begin{pmatrix}
		D_{m_n}^\pm & 0 \\[0.2cm]
		0 & b_1Q_{2m-\frac{n}{2}}vG_{4m-n}vQ_{2m-\frac{n}{2}}
	\end{pmatrix},
\end{equation*}
and $D_{m_n}^\pm$ are precisely the matrices $D^\pm$ appearing in Case 2 when $\k=m_n$.
Moreover, by \eqref{equ3.53}, the operators $r_{i,j}^\pm(\lambda)$ satisfy
\begin{equation}\label{equ3.65.55-1}
	r_{i,j}^\pm(\lambda)\in
	\begin{cases}
		\mathfrak{S}_{\frac{n+1}{2}}^{2m-n}\big((0,1)\big), &\text{if $i=j=m-\frac n2$},\\
		\mathfrak{S}_{\frac{n+1}{2}}^{1}\big((0,1)\big), &\text{if $i=j=2m-\frac n2$},\\
		\mathfrak{S}_{\frac{n+1}{2}}^{m-\frac n2}\big((0,1)\big), &\text{if $i=m-\frac n2$ with $j\neq m-\frac n2$, or $i\neq m-\frac n2$ with $j= m-\frac n2$},\\
		\mathfrak{S}_{\frac{n+1}{2}}^{\frac12}\big((0,1)\big),  &\text{else}.
	\end{cases}
\end{equation}
Since $D_{m_n}^\pm$ are invertible by Case 2 and $b_1Q_{2m-\frac{n}{2}}vG_{4m-n}vQ_{2m-\frac{n}{2}}$ is invertible on $Q_{2m-\frac{n}{2}}L^2$  by \eqref{eq:invertility of b_1 term}, one has the invertibility of  $D^\pm$ on $\bigoplus_{j\in {J}_{m_n+1}} Q_{j}L^2$. Then applying the identity \eqref{eq-inverse-A-to-M-1}, we establish all the results in this theorem. 
The estimates
$$\Gamma^{\pm}_{m-\frac n2,m-\frac n2}\in \mathfrak{S}_{\frac{n+1}{2}}^{1}\big((0,\lambda_0)\big) $$
are given by the facts that $r^{\pm}_{2m-\frac n2,2m-\frac n2}(\lambda)\in \mathfrak{S}_{\frac{n+1}{2}}^{1}\big((0,1)\big)$ and all the elements of $$(D^\pm)^{-1}\sum\limits_{l=2}^\infty\left(-\big(r_{ij}^\pm(\lambda)\big)_{i,j\in J_{m_n+1}}(D^\pm)^{-1}\right )^l$$ 
are in $\mathfrak{S}_{\frac{n+1}{2}}^{1}\big((0,\lambda_0)\big)$.

The proof of Theorem \ref{thm3.4} is complete.


\section{Proof of Theorem \ref{theorem4.0} (low energy part)}\label{section-4}
In this section, let $\lambda_{0}\in (0, 1)$ be given by Theorem \ref{thm3.4}.
By  resolvent identity and \eqref{equ0.2}, we have 
\begin{align}\label{eq4.54}
	e^{-\i tH}\chi( H)P_{ac}(H)=&
	\frac{m}{\pi \i}\, \int_{0}^{+\infty}e^{-\i t\lambda^{2m}}\left(R_{0}^{+}(\lambda^{2m})-R_{0}^{-}(\lambda^{2m})\right)\lambda^{2m-1}\chi(\lambda^{2m})\d\lambda\nonumber\\
	&-\frac{m}{\pi \i}\, \int_{0}^{\infty}e^{-\i t\lambda^{2m}}R_{0}^{+}vM^{+}(\lambda)^{-1}vR_{0}^{+}(\lambda^{2m})\lambda^{2m-1}\chi(\lambda^{2m})\d\lambda\nonumber\\	
&+\frac{m}{\pi \i}\, \int_{0}^{\infty}e^{-\i t\lambda^{2m}}R_{0}^{-}vM^{-}(\lambda)^{-1}vR_{0}^{-}(\lambda^{2m})\lambda^{2m-1}\chi(\lambda^{2m})\d\lambda\nonumber\\
	:=&\Omega^{low}_{0}-\left(\Omega^{+, low}_{ r}-\Omega^{-, low}_{r}\right).
\end{align}
Since
$\Omega^{low}_{0}=e^{-\i tH_0}\chi(H_0)$,
we have (see e.g. \cite{Mi,HHZ})
\begin{equation} \label{eq4.1.1}
	\left|\Omega^{ low}_{0}(t, x, y)\right|\lesssim |t|^{-\frac{n}{2 m}} \left(1+|t|^{-\frac{1}{2 m}}|x-y|\right)^{-\frac{n(m-1)}{2 m-1}},\quad t\neq0,\,\,x,y\in\mathbb{R}^n.
\end{equation}
 It suffices to estimate the kernel of the remainder term
$\Omega^{+, low}_{r}-\Omega^{-, low}_{r}$.
Applying the expansion \eqref{equ3.48} in Theorem \ref{thm3.4}, we have
\begin{equation}\label{equ4.19}
	\begin{split}
		\Omega_{r}^{\pm, low}(t, x, y)=\sum_{l, h\in J_{\k} }\frac{m}{\pi {i}} \int_{0}^{\infty}& e^{-\i t \lambda^{2 m}}\big{\langle}( M_{ l,h}^{\pm} +\Gamma_{l,h} ^ { \pm } (\lambda ) ) Q_{l}vR_{0} ^ { \pm }(\lambda^{2 m})(\cdot-y), \\
		&Q_{h} v R_{0}^{\mp}(\lambda^{2 m})(\cdot-x) \big{\rangle}\lambda^{4 m-n-l-h-1} \chi(\lambda^{2 m}) \d \lambda.
	\end{split}
\end{equation}
We mention that in order to deal with the oscillatory integral \eqref{equ4.19},  a key step involves analyzing  possible cancellations of $\Omega^{+, low}_{ r}-\Omega^{-, low}_{r}$.

Here, we present the kernel representations of  $Q_j v R_0^{\pm}(\lambda^{2m})$ and $Q_j v(R_0^+(\lambda^{2m}) - R_0^-(\lambda^{2m}))$,  in which we separate out appropriate oscillating factors. The proof relies on  Theorem \ref{thm3.4} and  exploits   the cancellation property of $Q_j$. Due to the technical and lengthy nature of the argument, we provide the detailed proof in Appendix \ref{app-002-8-9}.
\begin{proposition}\label{lemma4.2}
Let $Q_j$ be the orthogonal projections defined in \eqref{eq-Q_j-odd} for $j \in J_{\mathbf{k}}$, and let the exponent $\delta(j) = \max\left\{0, \left[j+\frac{1}{2}\right]\right\}$ as given in \eqref{CforQ_j-1}.
Then, we have
\begin{equation}\label{equ4.2.3}
		\begin{aligned}
			\left(Q_{j}vR_{0}^{\pm}(\lambda^{2 m})(x-\cdot) \right)(y)=\int_{0}^{1}e^{ \pm \i \lambda|x|}k_{j, 1,0}^{\pm}(\lambda,s,y,x) \d s+\int_{0}^{1} e^{\pm\i\lambda s |x|} k_{j,1,1}^{\pm}(\lambda, s,y, x) \d s,
		\end{aligned}
	\end{equation}
where $k_{j,1,i}^{\pm}$ ($i=0,1$) satisfy the following estimates  for $0\le \ell \le [\frac{n}{2m}]+1$ and $\lambda \in (0,1)$: 
\begin{equation}\label{equ4.3}
    \sup_{x,s} 
    \left\| \partial_{\lambda}^\ell  k_{j,1,i}^{\pm}(\lambda, s,\cdot, x) \right\|_{L^2} 
    \les \lambda^{\min\left\{n-2m+\delta(j), 0\right\}-l}, 
\end{equation}
and
\begin{equation}\label{equ4.4}
    \sup_{s} 
    \left\| \partial_{\lambda}^\ell  k_{j,1,i}^{\pm}(\lambda, s,\cdot, x) \right\|_{L^2} 
    \les \lambda^{\min\left\{\frac{n+1}{2}-2m+\delta(j), 0\right\}-l} \langle x \rangle^{-\frac{n-1}{2}}.
\end{equation}
Moreover, when   $\lambda\langle x\rangle \ge 1/2$, we have the following better estimates:
\begin{equation}\label{equ4.4-1-1}
    \sup_{s} 
    \left\| \partial_{\lambda}^\ell  k_{j,1,i}^{\pm}(\lambda, s,\cdot, x) \right\|_{L^2} 
    \les \lambda^{\frac{n+1}{2}-2m+\delta(j)-l} \langle x \rangle^{-\frac{n-1}{2}}.
\end{equation}
For $j\in \{m-\frac{n}{2}, 2m-\frac{n}{2}\}$, we have
\begin{equation}\label{equ4.4.3}
\begin{aligned}
	\Big(Q_{j}v\big(R_{0}^{+}(\lambda^{2 m})(x-\cdot)-&R_{0}^{-}(\lambda^{2 m})(x-\cdot)\big)\Big)(y)\\
	=&\sum_{q=0,1}\left( \int_{0}^{1} e^{\i \lambda s^q|x|} k_{ j,2,i}^{+}(\lambda, s, y, x) \d s-\int_{0}^{1} e^{-\i \lambda s^q|x|} k_{ j,2,i}^{-}(\lambda, s, y, x) \d s\right),
\end{aligned}
\end{equation}
where $k_{j,2,i}^{\pm}(\lambda, s, \cdot, x)$ (for $i=0, 1$) satisfy the following uniform estimates
for $0\le \ell \le [\frac{n}{2m}]+1$ and $\lambda \in (0,1)$:
 \par
\begin{equation}\label{equ4.4.4}
	\sup_{x,s} 
    \left\| \partial_{\lambda}^\ell  k_{2m-\frac n2,2,i}^{\pm}(\lambda, s,\cdot, x) \right\|_{L^2} 
    \les \lambda^{\frac{n+1}{2}-l}, 
\end{equation}
and
\begin{equation}\label{equ4.4.5}
	\sup_{s\in (0, 1)} 
    \left\| \partial_{\lambda}^\ell  k_{2m-\frac n2,2,i}^{\pm}(\lambda, s,\cdot, x) \right\|_{L^2} 
    \les \lambda^{-l} \langle x \rangle^{-\frac{n-1}{2}}.
\end{equation}
Furthermore, when $2m< n< 4m$, $k_{m-\frac{n}{2},2,i}^{\pm}$ ($i=0,1$) satisfy
\begin{equation}\label{equ4.34.2}
\sup_{x,s} 
    \left\| \partial_{\lambda}^\ell  k_{m-\frac n2,2,i}^{\pm}(\lambda, s,\cdot, x) \right\|_{L^2} 
    \les \lambda^{n-2m-l}, 
\end{equation}
 and	
\begin{equation}\label{equ4.34.3} 
    \sup_{s\in (0, 1)} 
    \left\| \partial_{\lambda}^\ell  k_{m-\frac n2,2,i}^{\pm}(\lambda, s,\cdot, x) \right\|_{L^2} 
    \les \lambda^{\frac{n+1}{2}-2m-l} \langle x \rangle^{-\frac{n-1}{2}}; 
\end{equation}
\end{proposition}

The following (oscillatory) integral estimates, which are more or less standard, will be frequently used in our analysis.

\begin{lemma}\label{lemmaA.2}
Let $\chi(\lambda)$ be given by \eqref{equ4.cutoff}, and consider the oscillatory integral
$$
I(t,x)=\int_{0}^{+\infty}e^{-\i t\lambda^{2m}+\i \lambda x}f(\lambda)\chi(\lambda)\, \d\lambda,
$$
where  $f(\lambda)\in C^{K}((0,\lambda_{0}))$ and satisfy that 
$$\left|\frac{d^j}{d\lambda^j} f(\lambda)\right|\le C_j|\lambda|^{b-j} \quad \text{ for $j=0,1,\ldots,K$}. $$
Denoted by $\mu_b=\frac{m-1-b}{2m-1}$, we have

\noindent (1)  If $b\in[-\frac{1}{2},\, 2Km-1)$, then
\begin{equation*}\label{eqA.4}
|I(t,x)|\lesssim
	|t|^{-\frac{1+b}{2m}}(|t|^{-\frac{1}{2m}}|x|)^{-\mu_b},\quad |t|^{-\frac{1}{2m}}|x|\geq1.
\end{equation*}
\noindent (2) If $b\in(-1,\, 2Km-1)$, then
\begin{equation}\label{eqA.5}
			|I(t,x)|\lesssim(1+|t|^{\frac{1}{2m}})^{-(1+b)},\quad |t|^{-\frac{1}{2m}}|x|<1.
\end{equation}
\end{lemma}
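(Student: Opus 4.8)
The plan is to estimate $I(t,x)$ by a dyadic decomposition of the $\lambda$-integral, using van der Corput's lemma of second order near the stationary point of the phase and $K$-fold integration by parts away from it. Write $\Lambda=\lambda_0^{1/2m}$, so that $f\chi$ is supported in $(0,\Lambda)$ with $\Lambda\le1$ and $|\partial_\lambda^j(f\chi)(\lambda)|\lesssim\lambda^{b-j}$ for $0\le j\le K$. The phase $\Phi(\lambda)=-t\lambda^{2m}+x\lambda$ has $\Phi'(\lambda)=x-2mt\lambda^{2m-1}$ and $\Phi''(\lambda)=-2m(2m-1)t\lambda^{2m-2}$, and its only possible interior critical point is $\lambda_c=(|x|/2m|t|)^{1/(2m-1)}$, present precisely when $xt>0$. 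Two default bounds will be invoked throughout: the $L^1$ bound $|I(t,x)|\le\int_0^\Lambda|f(\lambda)|\,d\lambda\lesssim\int_0^\Lambda\lambda^b\,d\lambda\lesssim1$, which is finite because $b>-1$; and, on a dyadic block $\lambda\sim\rho$, the trivial bound $\lesssim\rho^{b+1}$. The $L^1$ bound alone already settles part (ii) when $|t|\lesssim1$ (where $(1+|t|^{1/2m})^{-(1+b)}\sim1$), which is exactly why (ii) needs only $b>-1$; the remaining cases will require genuine oscillation.

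Decompose $f\chi=\sum_\rho(f\chi)_\rho$ with $(f\chi)_\rho$ supported in $\lambda\sim\rho=2^{-k}\lesssim\Lambda$. On a block with $\rho$ not comparable to $\lambda_c$ (or whenever $xt\le0$), $\Phi'$ is monotone there with $|\Phi'|\gtrsim N_\rho:=\max\{|x|,\,|t|\rho^{2m-1}\}$ and $|\Phi^{(i)}|\lesssim|t|\rho^{2m-i}$ for $i\ge2$, so integrating by parts $K$ times yields a contribution $\lesssim\rho^{b+1}\min\{1,(\rho N_\rho)^{-K}\}$. On the block $\lambda\sim\lambda_c$ one has $|\Phi''|\sim|t|\lambda_c^{2m-2}$, and van der Corput's lemma of order two gives a contribution $\lesssim(|t|\lambda_c^{2m-2})^{-1/2}\lambda_c^{b}=|t|^{-1/2}\lambda_c^{b-m+1}$ (we may always keep the smaller of this and the default $\lambda_c^{b+1}$). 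Using $\lambda_c^{2m-1}=|x|/2m|t|$ together with $\mu_b=\tfrac{m-1-b}{2m-1}$, one checks the algebraic identity $|t|^{-1/2}\lambda_c^{b-m+1}=|t|^{-(1+b)/2m}\bigl(|t|^{-1/2m}|x|\bigr)^{-\mu_b}$, so the stationary block reproduces exactly the profile claimed in (i).

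It then remains to sum the blocks. For $\rho\ll\lambda_c$ one has $N_\rho\sim|x|$, and splitting $\sum_\rho\rho^{b+1}\min\{1,(\rho|x|)^{-K}\}$ at the crossover $\rho\sim|x|^{-1}$ — using $b+1>0$ below it and $b<2Km-1$ above it — produces a total $\lesssim\max\{|x|^{-(b+1)},\,|x|^{-K}\lambda_c^{\,b+1-K}\}$; in the regime $|t|^{-1/2m}|x|\ge1$ each of these is $\lesssim|t|^{-(1+b)/2m}(|t|^{-1/2m}|x|)^{-\mu_b}$, the key inequality for the first being $b+1\ge\mu_b$, equivalently $b\ge-\tfrac12$ — this is where that hypothesis is consumed. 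For $\rho\gg\lambda_c$ one has $N_\rho\sim|t|\rho^{2m-1}$, so the integration-by-parts bound is $\lesssim|t|^{-K}\rho^{\,b+1-2mK}$, summable since $b<2Km-1$ and dominated by $\rho\sim\lambda_c$ — whence it is absorbed by the stationary term (this is where the $K$ available derivatives enter). When no critical point lies in $(0,\Lambda)$ — which forces $xt\le0$ or $|x|\gtrsim|t|$ — only the first, non-stationary, mechanism occurs and the same accounting gives the bound; this covers in particular $|t|\lesssim1$ with $r\ge1$, where $\lambda_c>\Lambda$ necessarily. Part (ii) follows from the analogous, simpler summation, with $\lambda_c$ either outside the support or contributing $\lesssim\lambda_c^{b+1}\lesssim1$.

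The hard part is not any individual estimate but the bookkeeping: one must organize the decomposition so that the blocks near $\lambda=0$ (where $\lambda^b$ is singular and only $K$ derivatives are at hand) and those near $\lambda=\Lambda$ are both summable, track how the stationary scale $\lambda_c$ enters and leaves the support of $\chi$ as $|x|$ ranges over $(0,\infty)$, and verify that the resulting powers of $|t|$ and $|x|$ collapse to precisely $\bigl(|t|^{-1/2m}|x|\bigr)^{-\mu_b}$ rather than some nearby exponent. The three numerical constraints $b>-1$, $b\ge-\tfrac12$ and $b<2Km-1$ are each used at a definite point, and the comparisons $|x|^{-(b+1)}\lesssim|t|^{-(1+b)/2m}\bigl(|t|^{-1/2m}|x|\bigr)^{-\mu_b}$ for $r\ge1$ and $|t|^{-K}\lambda_c^{\,b+1-2mK}\lesssim|t|^{-1/2}\lambda_c^{b-m+1}$ are the numerical crux.
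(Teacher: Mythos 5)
Your proof is correct. The paper does not actually supply a proof of this lemma---it cites \cite{CHZ,HHZ}---so there is no in-text argument to compare against, but your dyadic van der Corput / nonstationary-phase decomposition is the canonical way to prove such bounds, and all the algebraic identities you invoke check out: $|t|^{-1/2}\lambda_c^{b-m+1}=|t|^{-(1+b)/2m}\bigl(|t|^{-1/2m}|x|\bigr)^{-\mu_b}$, the equivalence $b+1\ge\mu_b\Leftrightarrow b\ge-\tfrac12$, and the dominance comparisons $|x|^{-K}\lambda_c^{b+1-K}\lesssim|t|^{-1/2}\lambda_c^{b-m+1}$ and $|t|^{-K}\lambda_c^{b+1-2mK}\lesssim|t|^{-1/2}\lambda_c^{b-m+1}$ all reduce to $(|t|^{-1/2m}|x|)^{-2m(K-\frac12)/(2m-1)}\le1$, which holds since $K\ge1$. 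You also correctly locate where each of $b>-1$ ($L^1$ bound near $\lambda=0$), $b\ge-\tfrac12$ (absorbing $|x|^{-(b+1)}$), and $b<2Km-1$ (summing the $\rho\gg\lambda_c$ blocks) is consumed.

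One small inaccuracy in a parenthetical: it is not true that $|t|\lesssim1$ together with $|t|^{-1/2m}|x|\ge1$ forces $\lambda_c>\Lambda$. For example, $|t|\sim1$ and $|x|\sim1$ give $\lambda_c\sim(2m)^{-1/(2m-1)}$, which may well be below $\Lambda=\lambda_0^{1/2m}$. The argument survives anyway: when $\lambda_c\le\Lambda$ and $|t|\lesssim1$, the target RHS is $\gtrsim1$ whenever $\mu_b\le0$ (both factors are $\ge1$), and when $\mu_b>0$ the inequality $\text{RHS}<1$ unwinds to $\lambda_c>|t|^{-1/(2(2m-1)\mu_b)}\ge1\ge\Lambda$, so the stationary point has already left the support. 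Thus the $L^1$ bound or the nonstationary mechanism always suffices, and the statement of the lemma is indeed proved; only the justificatory aside should be corrected. It is also worth noting explicitly---as your proof implicitly requires---that the $K\ge1$ needed for the second-order van der Corput step is automatic from $-1<b<2Km-1$.
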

\begin{proof}
   For the proof, we refer to \cite[Lemma 2.2]{HHZ} or  \cite[Lemma 2.5]{CHZ}.
\end{proof}

\begin{lemma}\label{lem3.10}
	Let $n\ge 1$. Then there is some absolute constant $C>0$ such that
	\begin{equation*}\label{eq2.20}
		\int_{\mathbb{R}^n}|x-y|^{-k}\langle y\rangle^{-l}\,\d y\leq C\langle x\rangle^{-\min\{k,\, k+l-n\}},
	\end{equation*}
	provided  $l\ge 0$, $0\le k<n$ and $k+l>n$.
\end{lemma}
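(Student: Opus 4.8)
The statement is a classical convolution-type estimate, and the plan is to prove it by splitting $\mathbb{R}^n$ according to the relative sizes of $|y|$, $|x-y|$ and $|x|$. First I would dispose of the range $|x|\le 2$: there $\langle x\rangle\sim 1$, the hypothesis $0\le k<n$ makes $|x-y|^{-k}$ locally integrable, and $k+l>n$ makes $|x-y|^{-k}\langle y\rangle^{-l}\sim|y|^{-k-l}$ integrable at infinity, so the left-hand side is bounded by an absolute constant while the right-hand side is bounded below, and the estimate is trivial. From now on assume $|x|\ge 2$.

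For $|x|\ge 2$ I would write $\mathbb{R}^n=A\cup B\cup C$ with $A=\{|y|\le |x|/2\}$, $B=\{|x-y|\le |x|/2\}$ and $C=\mathbb{R}^n\setminus(A\cup B)$. On $A$ one has $|x-y|\ge|x|/2\sim\langle x\rangle$, so $|x-y|^{-k}\lesssim\langle x\rangle^{-k}$, and using
\[
\int_{|y|\le R}\langle y\rangle^{-l}\,\d y\lesssim\begin{cases} R^{\,n-l}, & l<n,\\[2pt] 1, & l>n, \end{cases}
\]
with $R=|x|/2$, the $A$-contribution is $\lesssim\langle x\rangle^{-k}$ when $l>n$ and $\lesssim\langle x\rangle^{-(k+l-n)}$ when $l<n$; in either case it is $\lesssim\langle x\rangle^{-\min\{k,\,k+l-n\}}$ (the borderline value $l=n$ costing only an extra $\log\langle x\rangle$, which does not arise in our applications). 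On $B$ one has $|y|\ge|x|/2$, hence $\langle y\rangle^{-l}\lesssim\langle x\rangle^{-l}$, and since $k<n$ we have $\int_{|x-y|\le|x|/2}|x-y|^{-k}\,\d y\lesssim|x|^{\,n-k}$, so the $B$-contribution is $\lesssim\langle x\rangle^{\,n-k-l}=\langle x\rangle^{-(k+l-n)}\le\langle x\rangle^{-\min\{k,\,k+l-n\}}$. On $C$ both $|y|>|x|/2$ and $|x-y|>|x|/2$; I would split $C$ once more by $|y|\le 2|x|$ or $|y|>2|x|$. In the first piece $\langle y\rangle\sim\langle x\rangle$ and $|x-y|$ carries no singularity, so the integral is $\lesssim\langle x\rangle^{-l}\cdot|x|^{\,n-k}\sim\langle x\rangle^{\,n-k-l}$; in the second piece $|x-y|\ge|y|/2$, so the integrand is $\lesssim|y|^{-k-l}$ and $\int_{|y|>2|x|}|y|^{-k-l}\,\d y\lesssim|x|^{\,n-k-l}$ because $k+l>n$. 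Summing the contributions of $A$, $B$ and $C$ gives the claim.

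The one point requiring care — and the step I would watch most closely — is the exponent bookkeeping. Regions $B$ and $C$ always yield a bound with exponent $-(k+l-n)$, which is admissible since $-(k+l-n)\le-\min\{k,\,k+l-n\}$; region $A$ yields exponent $-k$ when $l>n$ and $-(k+l-n)$ when $l<n$, matching $-\min\{k,\,k+l-n\}$ in both cases. Beyond this, everything reduces to the two elementary facts $\int_{|z|\le R}|z|^{-k}\,\d z\sim R^{\,n-k}$ (for $R\ge 1$, $k<n$) and $\int_{|z|>R}|z|^{-\mu}\,\d z\sim R^{\,n-\mu}$ (for $\mu>n$); since the result is standard and already recorded in \cite{GV,EG10}, I would keep the write-up short.
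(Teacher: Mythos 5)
The paper does not prove this lemma at all: it is cited as \cite[Lemma 3.8]{GV} and \cite[Lemma 6.3]{EG10} with no argument given, so there is no ``paper's proof'' to compare against. Your region decomposition $A\cup B\cup C$ with the further split of $C$ by $|y|\lessgtr 2|x|$ is a correct and self-contained way to establish the bound, and your exponent bookkeeping checks out: regions $B$ and $C$ always produce $\langle x\rangle^{-(k+l-n)}$, while region $A$ produces $\langle x\rangle^{-k}$ when $l>n$ and $\langle x\rangle^{-(k+l-n)}$ when $l<n$, which in each case is $\le\langle x\rangle^{-\min\{k,\,k+l-n\}}$ for $|x|\ge 1$. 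You are also right to flag the $l=n$ borderline: as stated (with only $l\ge 0$, $0\le k<n$, $k+l>n$), the lemma literally fails there, since for instance $n=1$, $k=\tfrac12$, $l=1$ gives a left side of size $\langle x\rangle^{-1/2}\log\langle x\rangle$. This is a genuine (if minor) gap in the statement as written, though it is harmless here because the only place the paper invokes the lemma (the appendix proof of Lemma \ref{lemmaA.1}) applies it with $2(\sigma-j-1)>n$ strictly; the paper's other integral lemma, Lemma \ref{lmEd}, sidesteps the same issue by writing $\langle y\rangle^{-\beta-}$, and the cited sources make analogous adjustments.
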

\begin{proof}
  See e.g. in \cite[Lemma 3.8]{GV} or \cite[Lemma 6.3]{EG10}
\end{proof}

The analysis of Theorem~\ref{thm1.1} proceeds differently depending on whether $|t|^{-\frac{1}{2m}}(|x| + |y|) \geq 1$ or $|t|^{-\frac{1}{2m}}(|x| + |y|) < 1$, which we address separately in the following two subsections.

\subsection{The region $|t|^{-\frac{1}{2m}}(|x| + |y|) \geq 1$}
\label{sec4.2.1}\

Without loss of generality,	we assume $|x| \geq|y|$  and thus $|t|^{-\frac{1}{2m}}|x| \ge \frac12$. The proof of this range naturally splits into two cases based on the value of $\k$: $\k \le m_n$ and $\k=m_n+1$.

We use Proposition \ref{lemma4.2} to rewrite \eqref{equ4.19} as
\begin{equation}\label{equ4.20}
	\begin{split}
		\Omega_{r}^{\pm, low}(t, x, y)= 
		\sum_{p, q \in\{0,1\}}\sum_{l, h\in J_{\k}}\int_{0}^{1} \int_{0}^{1} \int_{0}^{+\infty} e^{-\i t \lambda^{2 m} \pm \i \lambda\left(s_{1}^{p}|y|+s_{2}^{q}|x|\right)} T_{l, h}^{\pm}\left(\lambda, x, y, s_{1}, s_{2}\right)\chi(\lambda^{2 m})\d \lambda \d s_{1} \d s_{2},
	\end{split}\tag{\ref{equ4.19}'}
\end{equation}
where
\begin{equation}\label{equ4.20.00}
	\begin{array}{l}
		T_{l, h}^{\pm}:=\left\langle\left(M_{l, h}^{ \pm}+\Gamma_{l, h}^{\pm}(\lambda)\right)  k_{l,1,p}^{\pm}\left(\lambda, s_{1}, \cdot, y\right), k_{h,1,q}^{\mp}\left(\lambda, s_{2},\cdot,x \right)\right\rangle\lambda^{4 m-n-l-h-1}.
	\end{array}
\end{equation} 
\noindent\textbf{Case 1: $0\le \k \le m_n$.}\

We begin by estimating $T_{l, h}^{\pm}$ in the support of $\chi(\lambda^{2m})$. Specifically, we apply \eqref{equ4.3} to control the term $k_{l,1,p}^{\pm}(\lambda, s_1, \cdot, y)$, and \eqref{equ4.4} to handle $k_{h,1,q}^{\mp}(\lambda, s_2, \cdot, x)$, both of which appear in \eqref{equ4.20.00}. These estimates follow directly from Proposition \ref{lemma4.2}.

Let $0\le \k\le \mathbf{k}_c=\max\{m-\frac{n-1}{2},0\}$. For $l, h\in J_{\mathbf{k}}$, one has $l,h\le \max\{2m-n,\, 0\}$ according to \eqref{equ3.12} and \eqref{equ3.12-1}. Recall that $\delta(l)=\max\{0,[l+1/2]\}$,  thus we obtain
$$
\min\{n-2m+\delta(l),\,0\}=n-2m+\delta(l),
$$
and
$$
 \min\{\mbox{$\frac{n+1}{2} -2m+\delta(h)$}, 0\}=\mbox{$\frac{n+1}{2}-2m+\delta(h)$}.
$$
Using\eqref{equ4.3} (applied to  $k_{l,1,p}^{\pm}\left(\lambda, s_{1}, \cdot, y\right)$), \eqref{equ4.4} (applied to  $k_{h,1,q}^{\mp}\left(\lambda, s_{2},\cdot,x \right)$) and Theorem \ref{thm3.4}, we deduce that for $0<\lambda<\lambda_0$,
\begin{equation}\label{equ4.23}
|\partial_{\lambda}^{\gamma} T_{l, h}^{\pm}(\lambda, x, y, s_{1}, s_{2})|\lesssim\lambda^{\frac{n-1}{2}-\gamma}\langle x\rangle^{-\frac{n-1}{2}} ,\quad 0\le \gamma\le\mbox{$[\frac{n}{2m}]+1$},
\end{equation}
where the inequality holds uniformly for all $x,y\in \R^n$ and $s_{1}, s_{2}\in (0, 1)$.

Now let $\max\{m-\frac{n-1}{2},0\}=\k_c < \k \leq m_n$. This implies that
\begin{equation}\label{eq-8-15-1}
    l \leq \max J_k = \k_c + \k-1 \leq 2m - \tfrac{n+1}{2}.
\end{equation}
 If $l < \max\{2m - n, 0\}$, following the arguments leading to \eqref{equ4.23}, we establish the uniform estimate
\begin{equation}\label{equ4.23-1-1}
|\partial_\lambda^\gamma T_{l,h}^\pm(\lambda,x,y,s_1,s_2)| \lesssim \lambda^{\frac{n-1}{2}-\gamma}\langle x \rangle^{-\frac{n-1}{2}}, \quad 0\le \gamma\le\mbox{$[\frac{n}{2m}]+1$}.
\end{equation}
We next consider $l \geq \max\{2m - n, 0\}$. In this regime, we have 
$$
\min\{n - 2m + \delta(l), 0\} = 0,$$
and
$$
\min\left\{\tfrac{n+1}{2} - 2m + \delta(h), 0\right\} = \tfrac{n+1}{2} - 2m + \delta(h).
$$
By reapplying \eqref{equ4.3} to $k_{l,1,p}^\pm(\lambda,s_1,\cdot,y)$,  \eqref{equ4.4} to $k_{h,1,q}^\mp(\lambda,s_2,\cdot,x)$, and invoking  Theorem \ref{thm3.4}, we conclude, together with  \eqref{equ4.23-1-1}, that  for  $0<\lambda<\lambda_0$, and $0\le \gamma\le [\frac{n}{2m}]+1$,
\begin{equation}\label{equ4.23.2-0}
|\partial_\lambda^\gamma T_{l,h}^\pm(\lambda,x,y,s_1,s_2)| \lesssim 
\begin{cases}
\lambda^{\frac{n-1}{2}-\gamma}\langle x \rangle^{-\frac{n-1}{2}}, & l < \max\{2m - n, 0\}, \\
\lambda^{2m - \frac{n+1}{2} - l - \gamma}\langle x \rangle^{-\frac{n-1}{2}}, & l \geq \max\{2m - n, 0\}.
\end{cases}
\end{equation}
These estimates hold uniformly for all  $x,y\in \R^n$ and $s_{1}, s_{2}\in (0, 1)$.
Furthermore, observe that by \eqref{eq-8-15-1}, the following relationship holds:
$$ \min\{ 2m - \frac{n+1}{2} - l, \frac{n-1}{2}\} \ge 2m - \frac{n+1}{2} - \max J_{\k}=m_n-\k.$$
This,  combined with \eqref{equ4.23.2-0}, yields  for $0<\lambda<\lambda_0$,
\begin{equation}\label{equ4.23.2}
|\partial_{\lambda}^{\gamma} T_{l, h}^{\pm}(\lambda, x, y, s_{1}, s_{2})|\lesssim\lambda^{m_n-\k-\gamma}\langle x\rangle^{-\frac{n-1}{2}} ,\quad 0\le \gamma\le\mbox{$[\frac{n}{2m}]+1$},
\end{equation}
which holds uniformly  for all  $x,y\in \R^n$ and $s_{1}, s_{2}\in (0, 1)$.

To proceed, we use a cut-off function to split the integral region $(s_1, s_2)\in [0, 1]\times [0,1]$ of \eqref{equ4.20} into
 \begin{equation*}
D_1:=\{(s_1, s_2); |t|^{-\frac{1}{2 m}}\left(s_{1}^{p}|y|+s_{2}^{q}|x|\right) \le  1\},\quad\mbox{and}\quad D_2:=\{(s_1, s_2); |t|^{-\frac{1}{2 m}}\left(s_{1}^{p}|y|+s_{2}^{q}|x|\right) \geq1\}.
 \end{equation*}
If $0\le \k\le \mathbf{k}_c$, by \eqref{equ4.23}, we apply Lemma \ref{lemmaA.2} with $b=\frac{n-1}{2}$ to deduce
\begin{equation}\label{equ4.23.0}
	\begin{aligned}
		\left| \eqref{equ4.20}\right|  \leq& \int\int_{D_1} |t|^{-\frac{n+1}{4m}}\langle x\rangle^{-\frac{n-1}{2}} \d s_{1} \d s_{2} \\
		&+\int \int_{D_2} |t|^{-\frac{n+1}{4 m}}\left(|t|^{-\frac{1}{2 m}}\left(s_{1}^{p}|y|+s_{2}^{q}|x|\right)\right)^{-\mu_{\frac{n-1}{2}}} \langle x\rangle^{-\frac{n-1}{2}} \d s_{1} \d s_{2}\\
		 \lesssim& |t|^{-\frac{n}{2 m}}\left(1+|t|^{-\frac{1}{2 m}}|x-y|\right)^{-\frac{n(m-1)}{2m-1}},
	\end{aligned}
\end{equation}
where $\mu_{\frac{n-1}{2}}=\frac{m-1-\frac{n-1}{2}}{2 m-1}$. In the last inequality of \eqref{equ4.23.0}, we use the assumption  $|x-y| \le |x|+|y| \le  2|x|,\ |t|^{-\frac{1}{2 m}}|x|  \gtrsim  1$, the estimates
\begin{equation*}\label{equ4.mub}
	\int\int_{D_1} \d s_{1} \d s_{2} \leq \int\int_{D_1} \left(|t|^{-\frac{1}{2 m}}(s_{1}^{p}|y|+s_{2}^{q}|x|)\right)^{-\max\{{\mu_{\frac{n-1}{2}}}, 0\}}\d s_{1} \d s_{2}\leq (|t|^{-\frac{1}{2 m}}|x|)^{-\max\{{\mu_{\frac{n-1}{2}}}, 0\}},
\end{equation*}
and
\begin{equation*}
			\left(|t|^{-\frac{1}{2 m}}\left(s_{1}^{p}|y|+s_{2}^{q}|x|\right)\right)^{-\mu} \lesssim
			\begin{cases}
				(|t|^{-\frac{1}{2 m}}|x|)^{-\mu},&\text{if}\,\,\mu<0,\\
				(|t|^{-\frac{1}{2 m}}|x|)^{-\mu}\cdot  s_2^{-q\mu},&\text{if}\,\,\mu\ge 0,
			\end{cases}
		\end{equation*}
as well as the following identity
\begin{equation}\label{equ4.23.00}
	\mbox{$\frac{n-1}{2}+\frac{m-1-\frac{n-1}{2}}{2 m-1}=\frac{n(m-1)}{2m-1}$}.
\end{equation}
Similarly, if $\mathbf{k}_c<\k\le m_n$, 
by \eqref{equ4.23.2}, we apply Lemma \ref{lemmaA.2} with $b=m_n-\k$ to obtain
	\begin{align}\label{equ4.23.0000}
		\left| \eqref{equ4.20}\right|  \leq& \int\int_{D_1} |t|^{-\frac{m_n-\k+1}{2m}}\langle x\rangle^{-\frac{n-1}{2}} \d s_{1} \d s_{2} \nonumber\\
		&+\int \int_{D_2} |t|^{-\frac{m_n-\k+1}{2m}}\left(|t|^{-\frac{1}{2 m}}\left(s_{1}^{p}|y|+s_{2}^{q}|x|\right)\right)^{-\mu_{m_n-\k}} \langle x\rangle^{-\frac{n-1}{2}} \d s_{1} \d s_{2}\nonumber\\
		\lesssim& |t|^{-\frac{m_n-\k+\frac{n+1}{2}}{2m}}\left(1+|t|^{-\frac{1}{2 m}}|x-y|\right)^{-\frac{n(m-1)}{2m-1}} \\
        \les& |t|^{-\frac{2m_n-2\k+1}{2m}}(1+|t|^{-\frac{n}{2m}})\left(1+|t|^{-\frac{1}{2 m}}|x-y|\right)^{-\frac{n(m-1)}{2m-1}}. 
	\end{align}
    where we used the facts 
$$\mu_{m_n-\k}= \frac{m-1-(m_n-\k)}{2 m-1}=\frac{m-1-\frac{n-1}{2}+\frac{n-1}{2}-(m_n-\k)}{2 m-1} \ge \mu_{\frac{n-1}{2}}, $$
    and 
$$2m-2\k+1 \le m_n-\k+\frac{n+1}{2}  \le n . $$

\vspace{1em}
\noindent\textbf{Case 2: $\k=m_n+1$.}\ 

Now we turn to the case that zero is an eigenvalue. Observe  that in \eqref{equ4.19},  if $l\neq2m-\frac{n}{2}$ or $h\neq2m-\frac{n}{2}$,
		by \eqref{equ3.47},  \eqref{equ4.3} and \eqref{equ4.4}, we have
		\begin{equation}\label{equ4.23.3}
			 \sup_{y,s_1,s_2}|\partial_{\lambda}^{\gamma} T_{l, h}^{\pm}(\lambda, x, y, s_{1}, s_{2})|\lesssim\lambda^{-\gamma}\langle x\rangle^{-\frac{n-1}{2}} ,\quad \gamma=0,\cdots,\mbox{$[\frac{n}{2m}]+1$}.
		\end{equation}
Consequently, the estimates for integrals involving these terms coincide precisely with those obtained in the case $\k = m_n$ (see \eqref{equ4.23.2}).

We are left to estimate the kernel of
\begin{equation}\label{equ4.ker}
\begin{aligned}
 \int_{0}^{\infty}e^{-\i t \lambda^{2 m}}  \lambda^{-1}\chi(\lambda^{2 m}) \Psi(\lambda) \d \lambda,   
\end{aligned}
\end{equation}
where
\begin{equation}\label{eq:def for Psi(lambda)}
\Psi(\lambda)=R_{0}^{+}(\lambda^{2m}) v Q_{l} \big(M_{l, h}^{+}+\Gamma_{l,h}^+(\lambda)\big) Q_{h}v R_{0}^{+}(\lambda^{2m})
-R_{0}^{-}(\lambda^{2m}) v Q_{l} \big(M_{l, h}^{-}+\Gamma_{l,h}^{-}(\lambda)\big) Q_{h} v R_{0}^{-}(\lambda^{2m}),
\end{equation}
with $l=h=2m-\frac n2$.
The analysis of this integral shall be divided into two cases based on the dimension: $n < 2m$ and $2m < n < 4m$.\\

\noindent\emph{Subcase 2.1: $n<2m$.}\

In this subcase, we require more precise information about $\Psi(\lambda)$.
Select a smooth cutoff function $\phi \in C_c^\infty(\mathbb{R})$ satisfying
\begin{equation*}
\phi(t) = \begin{cases} 
1, &|t| \leq \frac{1}{2}, \\
0. &|t| \geq 1.
\end{cases}
\end{equation*}
We introduce the partition for the kernel of $\Psi(\lambda)$
\begin{equation}\label{eq:dec of Psi}
 \Psi(\lambda)(x,y)=(1-\phi(\lambda\langle x\rangle)\phi(\lambda\langle y\rangle))\Psi(\lambda)(x,y)+\phi(\lambda\langle x\rangle)\phi(\lambda\langle y\rangle)\Psi(\lambda)(x,y).   
\end{equation}
As in the previous argument, the kernel of \eqref{equ4.ker} associated with the first term on the RHS of \eqref{eq:dec of Psi} admits a decomposition
\begin{equation}\label{eq:firt two terms}
\begin{aligned}
\int_{0}^{1} \int_{0}^{1}\int_{0}^{+\infty} & e^{-\i t\lambda^{2m}\pm \i\lambda( s_{1}^{p}|y|+ s_{2}^q|x|)} T_{l, h}^{\pm}(\lambda, x, y, s_{1}, s_{2}) \left[1-\phi(\lambda\langle x\rangle)\phi(\lambda\langle y\rangle)\right]\chi (\lambda^{2 m})\d \lambda\d s_{1} \d s_{2},
\end{aligned}
\end{equation}
for $l=h=2m-\frac n2$, where $T_{l, h}^{\pm}$ are defined in \eqref{equ4.20.00}.
By \eqref{equ4.4-1-1},  one has  
		\begin{equation}\label{equ4.4.3.3}
		\sup_{s}\left\|\partial_{\lambda}^{\gamma} \Big[ (1-\phi(\lambda\langle x\rangle)) k^{\pm}_{2m-\frac{n}{2},1,1}(\lambda,s,\cdot,x)\Big]\right\|_{L^2}\les\lambda^{1-l} \langle x\rangle^{-\frac{n-1}{2}}, 
		\end{equation}
for $\lambda \in (0,1)$ and $l=0,\cdots,[\frac{n}{2m}]+1$. Noting  
$$(1-\phi(\lambda\langle x\rangle)\phi(\lambda\langle y\rangle))=1-\phi(\lambda\langle x\rangle)+(1-\phi(\lambda\langle y\rangle)\phi(\lambda\langle x\rangle)), $$
this, together with \eqref{equ4.4}, shows that 
$$\bigg|\partial_{\lambda}^{\gamma}\left[(1-\phi(\lambda\langle x\rangle)\phi(\lambda\langle y\rangle)) T_{l, h}^{\pm}(\lambda, x, y, s_{1}, s_{2})\right]\bigg|\lesssim\lambda^{-\gamma}\langle x\rangle^{-\frac{n-1}{2}} ,\quad \gamma=0,\mbox{$[\frac{n}{2m}]+1$},$$ where we note that $[\frac{n}{2m}]+1=1$. Then, we apply  Lemma \ref{lemmaA.2}  with $b=0$ to yield
		\begin{equation}\label{equ4.4.4.3}
			\begin{aligned}
				\left| \eqref{eq:firt two terms} \right|
				&\lesssim\sum_{p,q\in\{0, 1\}}\int_{0}^{1} \int_{0}^{1}
				|t|^{-\frac{1}{2m}}\left(|t|^{-\frac{1}{2 m}}\left|s_{1}^{p}|y|+ s_{2}^{q}|x|\right| \right)^{-\frac{m-1}{2 m-1}}
				\langle x\rangle^{-\frac{n-1}{2}}\d s_{1} \d s_{2} \\
				&\lesssim\sum_{p,q\in\{0, 1\}}\int_{0}^{1} \int_{0}^{1}
				|t|^{-\frac{1}{2m}}\left(|t|^{-\frac{1}{2 m}} |x|\right)^{-\frac{m-1}{2 m-1}}
				\langle x\rangle^{-\frac{n-1}{2}} s_2^{-q\frac{m-1}{2 m-1}}\d s_{1} \d s_{2} \\
				&\lesssim |t|^{-\frac{n+1}{4m}}\left(1+|t|^{-\frac{1}{2 m}}|x-y|\right)^{-\frac{n(m-1)}{2 m-1}} \\
                & \les |t|^{-\frac{1}{2m}}(1+|t|^{-\frac{n}{2m}})\left(1+|t|^{-\frac{1}{2 m}}|x-y|\right)^{-\frac{n(m-1)}{2 m-1}},
			\end{aligned}
		\end{equation}
		where in the third inequality we use $|x|\geqslant \frac12 |x-y|$ and the fact $\frac{m-1}{2 m-1}+\frac{n-1}{2}\ge\frac{n(m-1)}{2 m-1}$.

Next, we estimate the part involved with $\phi(\lambda\langle x\rangle)\phi(\lambda\langle y\rangle) \Psi(\lambda, x, y)$ in \eqref{eq:dec of Psi}, i.e.
		\begin{equation}\label{shit}
			\int_{0}^{+\infty}e^{-\i t\lambda^{2m}}\phi(\lambda\langle x\rangle)\phi(\lambda\langle y\rangle) \Psi(\lambda, x, y)
			\lambda^{-1} \chi(\lambda^{2 m} ) \d \lambda.
		\end{equation}
Following by \eqref{eq:def for Psi(lambda)}, we can decompose $\Psi(\lambda)$ as
$$\Psi(\lambda)= \Psi_M(\lambda)+\Psi_{\Gamma}(\lambda) $$
where 
$$\Psi_M(\lambda)
=R_{0}^{+} v Q_{2 m-\frac{n}{2}} M_{2m-\frac{n}{2}, 2m-\frac{n}{2}}^{+} Q_{2 m-\frac{n}{2}} v R_{0}^{+}
-R_{0}^{-} v Q_{2 m-\frac{n}{2}} M_{2m-\frac{n}{2}, 2m-\frac{n}{2}}^{-} Q_{2 m-\frac{n}{2}} v R_{0}^{-},
$$
and 
$$\Psi_{\Gamma}(\lambda)
=R_{0}^{+} v Q_{2 m-\frac{n}{2}} \Gamma^{+}_{2m-\frac{n}{2}, 2m-\frac{n}{2}}(\lambda) Q_{2 m-\frac{n}{2}} v R_{0}^{+}
-R_{0}^{-} v Q_{2 m-\frac{n}{2}} \Gamma^{-}_{2m-\frac{n}{2}, 2m-\frac{n}{2}}(\lambda) Q_{2 m-\frac{n}{2}} v R_{0}^{-}.
$$
Note that by \eqref{equ3.47.1}, \eqref{equ4.3} and \eqref{equ4.4}, one has
\begin{equation}\label{equ4.23.3-111111}
			\left|\partial_{\lambda}^{\gamma}\left[\lambda^{4 m-n-l-h-1}\left\langle\Gamma_{l, h}^{ \pm}(\lambda) Q_{l} k_{l,1,p}^{ \pm}\left(\lambda, s_{1}, \cdot, y\right), \,Q_{h}k_{h,1,q}^{\mp}\left(\lambda, s_{2},\cdot,x \right)\right\rangle \right]\right| \les \lambda^{-\gamma} \langle x\rangle^{\frac{n-1}{2}},
\end{equation}
for $\gamma=0,\cdots,[\frac{n}{2m}]+1$ and $l=h=2m-\frac n2$, thus the integral related to $\Psi_{\Gamma}(\lambda)$ also satisfies \eqref{equ4.4.4.3}. 

Therefore, we only need to consider
\begin{equation*}
\int_{0}^{+\infty}e^{-\i t\lambda^{2m}}\phi(\lambda\langle x\rangle)\phi(\lambda\langle y\rangle) \Psi_{M}(\lambda, x, y)
\lambda^{-1} \chi(\lambda^{2 m} ) \d \lambda.
\end{equation*}
Since $M_{2m-\frac{n}{2}, 2m-\frac{n}{2}}^{\pm}=(b_1Q_{2m-\frac{n}{2}}vG_{4m-n}vGQ_{2m-\frac{n}{2}})^{-1}$ is independent of the sign $\pm$, we exploit the cancellation  by writing
\begin{equation*}
\begin{aligned}
\Psi_M(\lambda)
=&\left(R_{0}^{+}(\lambda^{2 m})-R_{0}^{-}(\lambda^{2 m})\right) v Q_{2 m-\frac{n}{2}} M_{2m-\frac{n}{2}, 2m-\frac{n}{2}}^{+} Q_{2 m-\frac{n}{2}} v R_{0}^{+}(\lambda^{2 m}) \\
&+R_{0}^{-}(\lambda^{2 m}) v Q_{2 m-\frac{n}{2}} M_{2m-\frac{n}{2}, 2m-\frac{n}{2}}^{-} Q_{2 m-\frac{n}{2}} v\left(R_{0}^{+}(\lambda^{2 m})-R_{0}^{-}(\lambda^{2 m})\right),\\
:=&\Psi_{M,1}(\lambda)+\Psi_{M,2}(\lambda).
\end{aligned}
\end{equation*}
We only consider
\begin{equation}\label{equ4.24.3-11}
\int_{0}^{+\infty}e^{-\i t\lambda^{2m}}\phi(\lambda\langle x\rangle)\phi(\lambda\langle y\rangle) \Psi_{M,1}(\lambda, x, y)
\lambda^{-1} \chi(\lambda^{2m} ) \d \lambda,
\end{equation}
and the argument for the other term follows similarly. By using \eqref{equ4.3} and \eqref{equ4.4.3}, the integral \eqref{equ4.24.3-11} can be written as a linear combination of
\begin{align}\label{equ4.23.12.00}
\int_{0}^{1} \int_{0}^{1}\int_{0}^{+\infty}e^{-\i t\lambda^{2m}+\i \lambda s_{1}^{p}|y|\mp \i \lambda s_{2}^q|x|} \phi(\lambda\langle x\rangle)\phi(\lambda\langle y\rangle)
T_{M,1,2m-\frac{n}{2},2m-\frac{n}{2}}^{ \pm}\left(\lambda, s_{1}, s_{2}, x, y\right)\chi(\lambda^{2 m})  \d \lambda \d s_{1} \d s_{2},
\end{align}
where $p, q\in \{0, 1\}$, and
$$
T_{M,1,2m-\frac{n}{2},2m-\frac{n}{2}}^{ \pm}\left(\lambda, s_{1}, s_{2}, x, y\right)=\left\langle M_{2m-\frac{n}{2}, 2m-\frac{n}{2}}^{+} k_{2 m-\frac{n}{2},1,p}^{+}\left(\lambda, s_{1},\cdot, y\right), \, 
k_{2m-\frac{n}{2},2,q,}^{ \pm }\left(\lambda, s_{2},\cdot, x\right)\right\rangle\lambda^{-1}.$$
By \eqref{equ4.3} and \eqref{equ4.4.5}, we have
\begin{equation*}
\begin{aligned}
\sup_{y,s_1,s_2}\left|\partial_{\lambda}^{\gamma}\big[\phi(\lambda\langle x\rangle)
T_{M,1,2m-\frac{n}{2},2m-\frac{n}{2}}^{ \pm}\left(\lambda, s_{1}, s_{2}, x, y\right)\big]\right|	
&\les \lambda^{-\gamma}  \langle x\rangle^{-\frac{n-1}{2}} \\
&\les \lambda^{-\frac{m-\frac{n+1}{2}}{2m-1}-\gamma}  \langle x\rangle^{-\frac{n(m-1)}{2m-1}},    
\end{aligned}
 \end{equation*}
 for $\gamma=0,1.$
		Note that  $-\frac{m-\frac{n+1}{2}}{2m-1}>-\frac12$,
		then applying Lemma \ref{lemmaA.2} with $b=-\frac{m-\frac{n+1}{2}}{2m-1}$, we obtain
		\begin{equation}\label{equ4.23.6}
			\begin{aligned}
				\left| \eqref{equ4.23.12.00} \right|
				\lesssim&\int_{0}^{1} \int_{0}^{1}|t|^{-\frac{1+b}{2m}}
				\left(1+|t|^{-\frac{1}{2 m}}\left|s_{1}^{p} |y|\mp s_{2}^{q}|x|\right| \right)^{-\frac{m-1-b}{2 m-1}}
				\langle x\rangle^{-\frac{n(m-1)}{2m-1}}\d s_{1} \d s_{2} \\
				\le & |t|^{-\frac{1+b}{2m}}\langle x\rangle^{-\frac{n(m-1)}{2m-1}}\\
				\lesssim& |t|^{-\frac{n+1}{4m}} \left(1+|t|^{-\frac{1}{2 m}}|x-y|\right)^{-\frac{n(m-1)}{2m-1}} \\
                \les& |t|^{-\frac{1}{2m}}(1+|t|^{-\frac{n}{2m}})\left(1+|t|^{-\frac{1}{2 m}}|x-y|\right)^{-\frac{n(m-1)}{2 m-1}},
			\end{aligned}
		\end{equation}
		where in the above inequalities, we have used the identity \eqref{equ4.23.00} and the facts that $|x|\gtrsim |x-y|$ and $\frac{n+1}{4m} \ge \frac{1}{2m}$.\\

\noindent\emph{Subcase 2.2: $2m< n<4m$.}\

In this subcase, we estimate \eqref{equ4.ker} in a more direct way. Indeed, we can write
\begin{equation*}
\begin{split}
\Psi(\lambda)=
 \Psi_{1}(\lambda)+\Psi_{2}(\lambda)+\Psi_{3}(\lambda),
\end{split}
\end{equation*}
where 
\begin{equation}\label{equ-psi-8-15}
\begin{aligned}
\Psi_{1}(\lambda)=& \big(R_{0}^{+}(\lambda^{2m})-R_{0}^{+}(\lambda^{2m})\big) v Q_{l} \big(M_{l, h}^{+}+\Gamma_{l,h}^+(\lambda)\big) Q_{h}v R_{0}^{+}(\lambda^{2m}), \\  
\Psi_{2}(\lambda)=& R_{0}^{-}(\lambda^{2m})vQ_l\big(\Gamma_{l, h}^{+}(\lambda)-\Gamma_{l, h}^{-}(\lambda)\big) Q_{h}v R_{0}^{+}(\lambda^{2m}), \\
\Psi_{3}(\lambda)=& R_{0}^{-}(\lambda^{2m})vQ_l\big(M_{l, h}^{-}+\Gamma_{l,h}^-(\lambda)\big) Q_{h}v\big(R_{0}^{+}(\lambda^{2m})-R_{0}^{-}(\lambda^{2m})\big),  
	\end{aligned}
		\end{equation}
for $l=h=2m-\frac n2$. Then \eqref{equ4.ker} is decomposed into 
\begin{align}\label{eq: exp for I_1}
\int_{0}^{+\infty}e^{-\i t\lambda^{2m}}\big(\Psi_{1}(\lambda)+\Psi_{2}(\lambda)+\Psi_{3}(\lambda)\big)
\lambda^{-1} \chi(\lambda^{2m} ) \d \lambda=I_1+I_2+I_3.
\end{align}
By using \eqref{equ4.3} and \eqref{equ4.4.3}, the kernel of $I_1$ can be written as a linear combination of
\begin{align}\label{equ4.23.12.00-11}
\int_{0}^{1} \int_{0}^{1}\int_{0}^{+\infty}e^{-\i t\lambda^{2m}+\i \lambda s_{1}^{p}|y|\mp \i \lambda s_{2}^q|x|} 
T_{1,2m-\frac{n}{2},2m-\frac{n}{2}}^{ \pm}\left(\lambda, s_{1}, s_{2}, x, y\right) \chi(\lambda^{2m})  \d \lambda \d s_{1} \d s_{2},
\end{align}
where 
$$T_{1,l,h}^{\pm}(\lambda, s_{1}, s_{2}, x, y)=\left\langle (M_{l,h}^{+}+\Gamma_{l,h}^{+}(\lambda)) k_{l,1,p}^{+}\left(\lambda, s_{1},\cdot, y\right), \, 
k_{h,2,q,}^{ \pm }\left(\lambda, s_{2},\cdot, x\right)\right\rangle\lambda^{-1},$$
with $l=h=2m-\frac n2$.
By \eqref{equ4.3} and \eqref{equ4.4.5}, we have
\begin{equation}\label{eq:est for T_1}
\sup_{y,s_1,s_2}\left|\partial_{\lambda}^{\gamma}T_{1,2m-\frac{n}{2},2m-\frac{n}{2}}^{ \pm}\left(\lambda, s_{1}, s_{2}, x, y\right)\right|	\les \lambda^{-\gamma}  \langle x\rangle^{-\frac{n-1}{2}}, \quad\mbox{$\gamma=0,\cdots, [\frac{n}{2m}]+1$}. 
\end{equation}
Applying Lemma \ref{lemmaA.2} with $b=0$, we have
		\begin{align}\label{equ4.50}
			\left|I_1(t,x,y)\right|
			&\lesssim\int_{0}^{1} \int_{0}^{1}|t|^{-\frac{1}{2m}}\left(1+|t|^{-\frac{1}{2 m}}\left|s_{1}^{p} |y|\mp s_{2}^{q}|x|\right| \right)^{-\frac{m-1}{2 m-1}}
			\langle x\rangle^{-\frac{n-1}{2}}\d s_{1} \d s_{2} \nonumber\\
			&\lesssim |t|^{-\frac{1}{2m}}
			\langle x\rangle^{-\frac{n-1}{2}}
			\lesssim |t|^{-\frac{n+1}{4m}}\left(1+t^{-\frac{1}{2 m}}|x-y|\right)^{-\frac{n(m-1)}{2 m-1}},
		\end{align}
where we have used the fact that $\frac{n-1}{2}\ge \frac{n(m-1)}{2 m-1}$ when $2m<n<4m$.
We remark that the analysis of $I_2$ follows similarly to that of the integral \eqref{shit} where \eqref{equ4.23.3-111111} was used, while $I_3$ can be treated analogously to $I_1$. This completes the proof.

To conclude, by \textbf{Case 1} and \textbf{Case 2}, we have
		\begin{equation*}
			\begin{aligned}
				\left|\Omega_{r}^{+, low}(t, x, y)-\Omega_{r}^{-, low}(t, x, y)\right| \lesssim |t|^{-h(m, n, \mathbf{k})}(1+|t|^{-\frac{n}{2m}})
				\left(1+t^{-\frac{1}{2 m}}|x-y|\right)^{-\frac{n(m-1)}{2 m-1}},
			\end{aligned}
\end{equation*}
when $|t|^{-\frac{1}{2m}}(|x| + |y|) > 1$.

\subsection{The region $|t|^{-\frac{1}{2m}}(|x| + |y|) < 1$} \label{sec4.2.2}\ 

The proof of this range also splits into two cases based on the value of $\k$: the range $\k \le m_n$ and $\k=m_n+1$.\\

\noindent\textbf{Case 1: $0\le \k \le m_n$.}\

We divide this case into two subcases based on the dimension.\\

\noindent\emph{Subcase 1.1: $n<2m$.}\

In this case, we still use the representation \eqref{equ4.20}. By Theorem \ref{thm3.4} and \eqref{equ4.3}, it follows for  $l, h\in J_{\k}$ that, for $\k\le \k_c=m-\frac{n-1}{2}$ we have
\begin{equation*}
	\sup_{x,y,s_1,s_2} \left|\partial_{\lambda}^{\gamma}T_{l, h}^{\pm }\left(\lambda, x, y, s_{1}, s_{2}\right)\right| \lesssim \lambda^{n-1-\gamma},\quad\, \mbox{$\gamma=0,[\frac{n}{2m}]+1$},
\end{equation*}
while for $\mathbf{k}_c<\k\le m_n$ we have  
\begin{equation*}
	\sup_{x,y,s_1,s_2}\left|\partial_{\lambda}^{\gamma}T_{l,h}^{\pm}(\lambda,s_{1},s_{2},x,y) \right|\les \lambda^{2m-2\k-\gamma}, \quad \gamma=0,\mbox{$[\frac{n}{2m}]+1$},
\end{equation*}
where we note that $[\frac{n}{2m}]+1=1$. Applying  \eqref{eqA.5}  in Lemma \ref{lemmaA.2} with $b=n-1$ for $\k \le k_c$ and $b=2m-2\k$ for $\k_c<\k\le m_n$  yields
\begin{equation}\label{equ4.22.111-0}
|\eqref{equ4.20}| \lesssim |t|^{-\frac{n}{2 m}} \int_{0}^{1} \int_{0}^{1}  \d s_{1} \d s_{2}
\lesssim |t|^{-\frac{n}{2 m}}\left(1+|t|^{-\frac{1}{2 m}}|x-y|\right)^{-\frac{n(m-1)}{2 m-1}}, \quad \k\le \k_c,
\end{equation}
and
\begin{equation}\label{equ4.22.111-1}
|\eqref{equ4.20}| \lesssim |t|^{-\frac{2m+1-2\k}{2 m}} \int_{0}^{1} \int_{0}^{1}  \d s_{1} \d s_{2}
	\lesssim |t|^{-\frac{2m+1-2\k}{2 m}}\left(1+|t|^{-\frac{1}{2 m}}|x-y|\right)^{-\frac{n(m-1)}{2 m-1}}, \quad \k_c<\k\le m_n,
\end{equation}
where we have used  $|t|^{-\frac{1}{2m}}|x-y|< 1$.\\

\noindent\emph{Subcase1.2: $2m<n<4m$.}\

We now turn to the analysis for dimensions in the range $2m < n < 4m$.
In this case, when zero is an eigenvalue, i.e. $\k=0$, we need to take advantage of the cancellation in $\Omega_{r}^{+, low}-\Omega_{r}^{-, low}$. Indeed, we write
\begin{equation*}
\begin{split}
&R_{0}^{+}(\lambda^{2 m}) v M^{+}(\lambda)^{-1} v R_{0}^{+}(\lambda^{2 m})-R_{0}^{-}(\lambda ^{2 m}) v M^{-}(\lambda)^{-1} v R_{0}^{-}(\lambda^{2 m})\\
=&\left(R_{0}^{+}(\lambda^{2 m})-R_{0}^{-}(\lambda^{2 m})\right) v M^{+}(\lambda)^{-1} v R_{0}^{+}(\lambda^{2 m})+R_{0}^{-}(\lambda^{2 m}) v\left(M^{+}(\lambda)^{-1}-M^-(\lambda)^{-1}\right) v R_{0}^{+}(\lambda^{2 m})\\
&+R_{0}^{-}(\lambda^{2 m}) v M^{-}(\lambda)^{-1} v\left(R_{0}^{+}(\lambda^{2 m})-R_{0}^{-}(\lambda^{2 m})\right)\\
:= & \Phi_{1}(\lambda)+\Phi_{2}(\lambda)+\Phi_{3}(\lambda),
\end{split}
\end{equation*}
and we deduce from \eqref{eq4.54} that
\begin{equation*}
\Omega^{+, low}_{ r}-\Omega^{-, low}_{r}=\frac{m}{\pi {i}} \int_{0}^{\infty} e^{-\i t \lambda^{2 m}}\left(\Phi_{1}(\lambda)+\Phi_{2}(\lambda)+\Phi_{3}(\lambda)\right)\lambda^{2m-1} \chi(\lambda^{2 m}) \d \lambda.
\end{equation*}
By  Theorem \ref{thm3.4} and Proposition \ref{lemma4.2},  the kernel of the integral involving $\Phi_{1}(\lambda)$ is a linear combination of
\begin{equation}\label{eq:int inv PHI_1}
\int_{0}^{1} \int_{0}^{1} \int_{0}^{\infty} e^{-\i t \lambda^{2 m}+\i \lambda s_{1}^{p}|y| \mp \i \lambda s_{2}^q|x|} T_{1, m-\frac{n}{2},m-\frac{n}{2}}^{\pm}(\lambda,s_{1},s_{2},x,y) \chi(\lambda^{2m}) \d\lambda\d s_{1} \d s_{2},
\end{equation}
where $p, q\in\{0, 1\}$  and
\begin{equation*}
\begin{split}
&T_{1, m-\frac{n}{2},m-\frac{n}{2}}^{\pm}(\lambda,s_{1},s_{2},x,y)\\
=&\left\langle\left(M^\pm_{m-\frac{n}{2},m-\frac{n}{2}}+\Gamma^{+}_{m-\frac{n}{2},m-\frac{n}{2}}(\lambda)\right) k_{m-\frac{n}{2},1,p}^{+}(\lambda, s_{1}, \cdot, y), k_{m-\frac{n}{2},2,q}^{\pm}\left(\lambda, s_{2}, \cdot, x\right)\right\rangle\lambda^{2m-1}.
\end{split}
\end{equation*}
 Since zero is regular, we apply \eqref{equ3.47.1} with $\k=0$ to deduce
		\begin{equation}\label{equ4.39}
			M^\pm_{m-\frac{n}{2},m-\frac{n}{2}}+\Gamma^{+}_{m-\frac{n}{2},m-\frac{n}{2}}(\lambda) \in  \mathfrak{S}_{2}^{0}((0,\lambda_{0})).
		\end{equation}		
Then \eqref{equ4.39}, \eqref{equ4.3} and \eqref{equ4.34.2} imply
\begin{equation*}\label{equ4.39.1}
\sup_{x,y,s_1,s_2}\left|\partial_{\lambda}^{\gamma}T_{1,m-\frac{n}{2},m-\frac{n}{2}}^{\pm}(\lambda,s_{1},s_{2},x,y) \right|\les \lambda^{n-1-\gamma}, \quad \gamma=0,\cdots,\mbox{$[\frac{n}{2m}]+1$},
\end{equation*}
where we note that $[\frac{n}{2m}]+1=2$
Applying  \eqref{eqA.5} in Lemma \ref{lemmaA.2} with $b=n-1$ yields
\begin{align}\label{equ4.40}
\left|\eqref{eq:int inv PHI_1}\right| 
\lesssim (1+|t|)^{-\frac{n}{2 m}} \lesssim (1+|t|)^{-\frac{n}{2 m}}\left(1+|t|^{-\frac{1}{2 m}}|x-y|\right)^{-\frac{n(m-1)}{2 m-1}}.
\end{align}	
	
Note that $M^\pm_{m-\frac{n}{2},m-\frac{n}{2}}=(Q_{m-\frac{n}{2}}T_0Q_{m-\frac{n}{2}})^{-1}$  are independent of the sign $\pm$.	
To consider the integral involving $\Phi_{2}$,  by using \eqref{equ4.2.3},   its kernel is a linear combination of 
\begin{equation}\label{equ4.41}
\frac{\pi \i}{m}\int_{0}^{1} \int_{0}^{1}\int_{0}^{\infty} e^{-\i t \lambda^{2m}+\i \lambda s_{1}^{p}|y|-\i \lambda s_{2}^{q}|x|} T_{2, m-\frac{n}{2},m-\frac{n}{2}}(\lambda, s_{1}, s_{2}, x, y) \chi(\lambda)\d\lambda  \d s_{1} \d s_{2},
\end{equation}
where $p,q\in\{0, 1\}$ and 
\begin{equation*}
\begin{split}
&T_{2, m-\frac{n}{2},m-\frac{n}{2}}(\lambda, s_{1}, s_{2}, x, y)\\
=&\left\langle(\Gamma_{ m-\frac{n}{2},  m-\frac{n}{2}}^{+}(\lambda)-\Gamma_{ m-\frac{n}{2},  m-\frac{n}{2}}^{-}(\lambda)) k_{m-\frac{n}{2},1,p}^{+}\left(\lambda, s_{1}, \cdot, y\right) , \,k_{m-\frac{n}{2},1,q}^{+}\left(\lambda, s_{2},\cdot, x\right)\right\rangle\lambda^{2 m-1}.
\end{split}
\end{equation*}	
Using  \eqref{equ3.47.1}, one has 
$$ \Gamma_{ m-\frac{n}{2},  m-\frac{n}{2}}^{+}(\lambda)-\Gamma_{ m-\frac{n}{2},  m-\frac{n}{2}}^{-}(\lambda) \in  \mathfrak{S}_{2}^{n-2m}((0,\lambda_{0})).$$
Combining this with \eqref{equ4.3}, we have  
\begin{equation*}
\sup_{x,y,s_1,s_2} \left|\partial_{\lambda}^{\gamma}T_{2,m-\frac{n}{2},m-\frac{n}{2}}^{\pm}(\lambda,s_{1},s_{2},x,y) \right|\les \lambda^{n-1-\gamma}, \quad \gamma=0,1,\mbox{$[\frac{n}{2m}]+1$}.
\end{equation*}
Since $|t|^{-\frac{1}{2m}}\big|s_1^p|y|-s_2^q|x|\big|\leq1$ for $p,q\in\{0,1\}$,
Lemma \ref{lemmaA.2} gives
\begin{equation}\label{equ4.42}
\begin{aligned}
\left| \eqref{equ4.41} \right|  
\lesssim(1+|t|)^{-\frac{n}{2 m}} \lesssim (1+|t|)^{-\frac{n}{2 m}}\left(1+|t|^{-\frac{1}{2 m}}|x-y|\right)^{-\frac{n(m-1)}{2 m-1}}.
\end{aligned}
\end{equation}

The estimate for the integral associated with $\Phi_{3}(\lambda)$ is the same as that associated with $\Phi_{1}(\lambda)$, and we omit the details.
Combining with \eqref{equ4.40} and \eqref{equ4.42}, we have
\begin{equation}\label{equ4.43}
\left|\Omega_{r}^{+, low}(t, x, y)-\Omega_{r}^{-, low}(t, x, y)\right| \lesssim (1+|t|)^{-\frac{n}{2 m}}\left(1+|t|^{-\frac{1}{2 m}}|x-y|\right)^{-\frac{n(m-1)}{2 m-1}},
\end{equation}
which completes the case of $2m< n<4m$ and $\k=0$.

When $2m< n<4m$ and $1\le \k\le m_n=2m-\frac{n-1}{2}$, similar to the subcase $n<2m$, we use \eqref{equ4.20}. Furthermore, we can obtain 
\begin{equation*}
	\sup_{x,y,s_1,s_2}\left|\partial_{\lambda}^{\gamma}T_{l,h}^{\pm}(\lambda,s_{1},s_{2},x,y) \right|\les \lambda^{2m_n-2\k-\gamma}, \quad \gamma=0,\cdots,\mbox{$[\frac{n}{2m}]+1$},
\end{equation*}
where $[\frac{n}{2m}]+1=2$, which shows that 
\begin{equation}\label{equ4.43-01}
\left|\Omega_{r}^{+, low}(t, x, y)-\Omega_{r}^{-, low}(t, x, y)\right| \lesssim (1+|t|)^{-\frac{2m_n+1-2\k}{2 m}} \left(1+|t|^{-\frac{1}{2 m}}|x-y|\right)^{-\frac{n(m-1)}{2 m-1}}.
\end{equation}\\

\noindent\textbf{Case 2: $\k=m_n+1$.}\

Note that by \eqref{equ4.20}, \eqref{equ4.23.3} and the proof for \textbf{Case 1}, we are also left to estimate the kernel of 
\begin{equation}\label{equ4.ker-01}
\begin{aligned}
 \int_{0}^{\infty}e^{-\i t \lambda^{2 m}}  \Psi(\lambda)\lambda^{-1}\chi(\lambda^{2 m})  \d \lambda,  
\end{aligned}
\end{equation}
where $ \Psi(\lambda)$  is defined in  \eqref{eq:def for Psi(lambda)}.
Following the strategy employed in \emph{Subcase 2.2} of Subsection~\ref{sec4.2.1}, we decompose  \eqref{equ4.ker-01} into
\begin{align}\label{eq:psi-decomposition}
\int_{0}^{\infty} e^{-\mathrm{i} t\lambda^{2m}} \big(\Psi_{1}(\lambda) + \Psi_{2}(\lambda) + \Psi_{3}(\lambda)\big) \lambda^{-1} \chi(\lambda^{2m}) \d\lambda = I_1 + I_2 + I_3,
\end{align}
with  $\Psi_1(\lambda),\Psi_2(\lambda),\Psi_3(\lambda)$  as specified in \eqref{equ-psi-8-15}. 

For the term $I_1(t,x,y)$, it suffices to estimate \eqref{equ4.23.12.00-11}, we  have
\begin{equation}\label{eq:est for T_1-1}
\sup_{y,s_1,s_2}\left|\partial_{\lambda}^{\gamma}T_{1,2m-\frac{n}{2},2m-\frac{n}{2}}^{ \pm}\left(\lambda, s_{1}, s_{2}, x, y\right)\right|	\les \lambda^{-\gamma}, \quad \gamma=0,\cdots,\mbox{$[\frac{n}{2m}]+1$},
\end{equation}
which yields
\begin{align}\label{equ4.50-01}
			\left|I_1(t,x,y)\right| \lesssim |t|^{-\frac{1}{2m}}
			\lesssim |t|^{-\frac{1}{2m}}\left(1+t^{-\frac{1}{2 m}}|x-y|\right)^{-\frac{n(m-1)}{2 m-1}}.
\end{align}

$I_2(t,x,y)$ can be written as a linear combination of 
$$\int_{0}^{1} \int_{0}^{1}\int_{0}^{+\infty}e^{-\i t\lambda^{2m}+\i \lambda s_{1}^{p}|y|- \i \lambda s_{2}^q|x|} 
T_{2,2m-\frac{n}{2},2m-\frac{n}{2}}^{ \pm}(\lambda, s_{1}, s_{2}, x, y) \chi(\lambda^{2m}) \d \lambda \d s_{1} \d s_{2},$$
where $T_{2, 2m-\frac{n}{2},2m-\frac{n}{2}}(\lambda, s_{1}, s_{2}, x, y)$ is defined by
$$
\left\langle\left(\Gamma_{ 2m-\frac{n}{2},  2m-\frac{n}{2}}^{+}(\lambda)-\Gamma_{ 2m-\frac{n}{2},  2m-\frac{n}{2}}^{-}(\lambda)\right) k_{2m-\frac{n}{2},1,p}^{+}\left(\lambda, s_{1}, \cdot, y\right) , \,k_{2m-\frac{n}{2},1,q}^{+}\left(\lambda, s_{2},\cdot, x\right)\right\rangle\lambda^{-1}.$$
It follows from \eqref{equ4.3} that  
\begin{equation*}
\sup_{x,y,s_1,s_2} \left|\partial_{\lambda}^{\gamma}T_{2,2m-\frac{n}{2},2m-\frac{n}{2}}^{\pm}(\lambda,s_{1},s_{2},x,y) \right|\les \lambda^{-\gamma}, \quad \gamma=0,\cdots,\mbox{$[\frac{n}{2m}]+1$}.
\end{equation*}
This immediately yields the decay estimate 
\begin{align}\label{equ4.50-02}
			\left|I_2(t,x,y)\right| \lesssim |t|^{-\frac{1}{2m}}
			\lesssim |t|^{-\frac{1}{2m}}\left(1+t^{-\frac{1}{2 m}}|x-y|\right)^{-\frac{n(m-1)}{2 m-1}}.
\end{align}
Finally, the term  $I_3$ can be handled using the same method as for  $I_1$, which completes the argument.

 In summary, the proof of Theorem \ref{theorem4.0} is completed by considering the different resonance types:
\begin{enumerate}
    \item For $0 \leq \k \leq \mathbf{k}_c$, the estimate \eqref{equ4.1.2.0}  is established through the combination of \eqref{equ4.23.0}, \eqref{equ4.22.111-0}, and \eqref{equ4.43}.
    \item For $\mathbf{k}_c < \k \leq m_n$,  the estimate \eqref{equ4.1.2.0}   follows from \eqref{equ4.23.0000}, \eqref{equ4.22.111-1} and \eqref{equ4.43-01}.
    \item For $\k= m_n+1$, the analysis in Case 2 in Subsections \ref{sec4.2.1}--\ref{sec4.2.2} yields the  decay estimate 
\begin{equation*}
\left|\Omega_{r}^{+, low}(t, x, y)-\Omega_{r}^{-, low}(t, x, y)\right| \lesssim |t|^{-\frac{1}{2 m}} \left(1+|t|^{-\frac{1}{2 m}}|x-y|\right)^{-\frac{n(m-1)}{2 m-1}},
\end{equation*} 
which implies the estimate \eqref{equ4.1.2.0}.
\end{enumerate}

\begin{remark}\label{rmk4.1}
			During the proof,  we have taken advantage of the possible cancellation properties of  $\Omega^{+, low}_{ r}-\Omega^{-, low}_{r}$ when zero is regular in dimensions $n\ge 2m+1$, or when zero is an eigenvalue of $H$, while in  other cases, it suffices to prove results for $\Omega^{\pm, low}_{ r}$.
			We summarize the cases in the following.
			\begin{center}
			\begin{threeparttable}
				\scalebox{0.9}{
					\begin{tabular}{|c|c|c|c|}\hline
						\diagbox{dimension}{resonance } & $0\le k\le \mathbf{k}_c$ & $\mathbf{k}_c< k\le m_n$ & $k=m_n+1$ \\ \hline
						$1\le n\le 2m-1$ & {$\Omega_{r}^{\pm, low}$} & {$\Omega_{r}^{\pm, low}$}  & {$\Omega_{r}^{+, low}-\Omega_{r}^{-, low}$}  \\ \hline
						$2m+1\le n\le 4m-1$ & {$\Omega_{r}^{+, low}-\Omega_{r}^{-, low}$} & {$\Omega_{r}^{\pm, low}$}& {$\Omega_{r}^{+, low}-\Omega_{r}^{-, low}$} \\ \hline
					\end{tabular}
				}
	
		\end{threeparttable}
        \end{center}
	\end{remark}

\section{Proof of Theorem \ref{theorem4.1} (high energy part)}\label{section4.2}
 
We begin by applying the resolvent identity
\begin{equation*}
	R^\pm(\lambda)=R_0^\pm(\lambda)-R_0^\pm(\lambda)VR_0^\pm(\lambda)+R_0^\pm(\lambda)VR_0^\pm(\lambda)VR_0^\pm(\lambda)-R_0^\pm(\lambda) VR_0^\pm(\lambda) VR^\pm(\lambda)VR_0^\pm(\lambda)
\end{equation*}
to the Stone's formula of $e^{-\mathrm{i}tH}P_{ac}(H)\tilde{\chi}(H)$. This yields the spectral decomposition
\begin{equation*}
e^{-\mathrm{i}tH}P_{ac}(H)\tilde{\chi}(H)=\Omega_0^{high}+\sum_{j=1}^2(-1)^j(\Omega_j^{+,high}-\Omega_j^{-,high})-\Omega_{r}^{+,high}+\Omega_{r}^{-,high},
\end{equation*}
where
\begin{equation*}\label{Omegak}
	\Omega_0^{high}=\frac{1}{2\pi\i}\int_0^{+\infty}e^{-\mathrm{i}t\lambda}\tilde{\chi}(\lambda)(R_0^+(\lambda)-R_0^-(\lambda))\d\lambda, 
\end{equation*}
and
\begin{equation}\label{OmegaKr}
\begin{aligned}
\Omega_{j}^{\pm,high}&=\frac{1}{2\pi\i}\int_0^{+\infty}e^{-\mathrm{i}t\lambda}\tilde{\chi}(\lambda)R_0^\pm(\lambda)(VR_0^\pm(\lambda))^j\d\lambda, \\
\Omega_{r}^{\pm,high}&=\frac{1}{2\pi\i}\int_0^{+\infty}e^{-\mathrm{i}t\lambda}\tilde{\chi}(\lambda)R_0^\pm(\lambda)VR_0^\pm(\lambda)VR^\pm(\lambda)VR_0^\pm(\lambda)\d\lambda.  
\end{aligned}
\end{equation}
These integrals converge in the weak $*$ sense when $V$ satisfies (ii) of Assumption \ref{assum1}.

The distribution kernel of $\Omega_0^{high}$ can be written as the oscillatory integral
\begin{equation}\label{eq:Omega-high}
\Omega_0^{high}(t,x,y) = \mathscr{F}^{-1}\left(\widetilde{\chi}(|\cdot|^{2m})e^{-\mathrm{i}t|\cdot|^{2m}}\right)(x-y),
\end{equation}
which follows from the Fourier representation of the spectral measure for $(-\Delta)^m$. As shown in \cite[Lemma 2.1]{HHZ}, this  kernel satisfies the pointwise estimate
\begin{equation}\label{eq:Omega-est}
\left|\Omega_0^{high}(t,x,y)\right| \lesssim |t|^{-\frac{n}{2m}}\left(1 + t^{-\frac{1}{2m}}|x-y|\right)^{-\frac{n(m-1)}{2m}}.
\end{equation}
 We are left to consider $\Omega_{j}^{\pm,high}(t,x,y)$, $j=1,2$ and $\Omega_{r}^{\pm,high}(t,x,y)$. In what follows, we assume that
\begin{equation}\label{eq-V-815-1}
    |V(x)|\lesssim\langle x\rangle^{-(n+\frac 32)-}, \qquad x\in \R^n,
\end{equation}
which is weaker than (ii) in Assumption \ref{assum1}.
 Theorem \ref{theorem4.1} follows from  \eqref{eq:Omega-est}, together with the following  two propositions.
\begin{proposition}\label{pro-est-for-omega-j}
The integral kernels of $\Omega_{j}^{\pm,high}$ $(j=1,2)$ satisfy 
\begin{equation}\label{eq-est-for-omega-j}
		|\Omega_{j}^{\pm,high}(t,x,y)|\lesssim|t|^{-\frac{n}{2m}}(1+|t|^{-\frac{1}{2m}}|x-y|)^{-\frac{n(m-1)}{2m-1}},\quad|t|>0,\,x,y\in\mathbb{R}^n.
\end{equation}
\end{proposition}
\begin{proposition}\label{pro-5-1-815}
The integral kernels of $\Omega_{r}^{\pm,high}$ satisfy  
\begin{equation}\label{estOmegaK}
		|\Omega_{r}^{\pm,high}(t,x,y)|\lesssim|t|^{-\frac{n}{2m}}(1+|t|^{-\frac{1}{2m}}|x-y|)^{-\frac{n(m-1)}{2m-1}},\quad|t|>0,\,x,y\in\mathbb{R}^n.
\end{equation}
\end{proposition}
In order to prove the above propositions, we need the following explicit    representation for the free resolvent kernel  $R_0^\pm(\lambda)(x-y)$. 
\begin{lemma}
\begin{equation}\label{eq:repre for R}
R_0^\pm(\lambda)(x-y)= e^{\pm \i \lambda |x-y|}\lambda^{\frac{n+1}{2}-2m} \frac{\mathcal{K}^{\pm}(\lambda|x-y|)}{|x-y|^{\frac{n-1}{2}}},    
\end{equation}
where  the functions $\mathcal{K}^{\pm}(z)$ satisfy 
\begin{equation}\label{eq:est for mathcalK}
  \left|\frac{d^l}{dz^l}\mathcal{K}^{\pm}(z) \right|\les_{l}|z|^{-l} \quad  \text{for}~~ l\in \N_0 ~\text{and}~ z\in\mathbb{R}\setminus\{0\}.
\end{equation}
\end{lemma}
\begin{proof}
The results follow immediately by applying \eqref{eq2.8} when $\lambda|x-y| \le  1/2$ and \eqref{eq:exp to infinity} when $\lambda|x-y| >1/2$.
\end{proof}

The following oscillatory  integral estimates (see \cite[Lemma 2.1]{HHZ}) will also be used.

\begin{lemma}\label{lemmaA.222}
	Suppose $K\in\mathbb{N}_+$,  and consider the oscillatory integral
	\begin{equation*}
		I(t,x)=\int_0^{+\infty}e^{-\i (t\lambda^{2m}+x\lambda)}\psi(\lambda)\d\lambda,\quad t\neq0,\,x\in\mathbb{R},
	\end{equation*}
 where $\psi\in C^K((\frac{\lambda_0}{2},+\infty))$ such that 
\begin{equation}\label{eq:bound for osci lemma}
 \left|\frac{d^j}{d\lambda^j} \psi(\lambda)\right|\le C_j|\lambda|^{b-j} \quad \text{ for $j=0,1,\cdots,K$}.   
\end{equation}
Denoted by $\mu_b=\frac{m-1-b}{2m-1}$, we have the following estimates.

\noindent 1) If $b\in[m-1-K(2m-1),2Km-1)$, then
\begin{equation}\label{longime}
	|I(t,x)|\lesssim\begin{cases}
		|t|^{-\frac 12+\mu_{b}}|x|^{-\mu_{b}},\quad&|t|\gtrsim1,\,|t|^{-1}|x|\gtrsim1,\\
		|t|^{-K},&|t|\gtrsim1,\,|t|^{-1}|x|\ll 1.
	\end{cases}
\end{equation}

\noindent 2) If $b\in[-\frac 12,2Km-1)$, then
\begin{equation}\label{shorttime}
	|I(t,x)|\lesssim|t|^{-\frac{1+b}{2m}}\left(1+|t|^{-\frac{1}{2m}}|x|\right)^{-\mu_{b}},\quad0<|t|\lesssim1,\,x\in\mathbb{R}.
\end{equation}
\end{lemma}

We now prove Proposition \ref{pro-est-for-omega-j}. 
\begin{proof}[Proof of Proposition \ref{pro-est-for-omega-j}]
We present the proof for $\Omega_{1}^{\pm,\text{high}}$ only, as the case for $\Omega_{2}^{\pm,\text{high}}$ follows analogously.
We start by using \eqref{eq:repre for R} to express $\Omega_{j}^{\pm,high}(t,x,y)$ in  integral form
$$\Omega_{j}^{\pm,high}(t,x,y)=\frac{1}{2\pi\i}
\int_{\R^n}\int_0^{+\infty}e^{-\mathrm{i}t\lambda\pm \i \lambda\rho}\tilde{\chi}(\lambda) \frac{\mathcal{K}^{\pm}(\lambda|x-z|)\, V(z)\,\mathcal{K}^{\pm}(\lambda|z-y|)}{|x-z|^{\frac{n-1}{2}}|z-y|^{\frac{n-1}{2}}}\lambda^{n-2m}\d\lambda \d z,$$
where $\rho=|x-z|+|z-y|$.
This representation allows us to analyze the problem by estimating an oscillatory integral of the form treated in Lemma \ref{lemmaA.222}.

Since $n - 2m \leq \frac{n-1}{2}$, by \eqref{eq:est for mathcalK},  the functions $\mathcal{K}^{\pm}(\lambda|x-z|)\mathcal{K}^{\pm}(\lambda|z-y|)\lambda^{n-2m}$ satisfy the uniform bound 
$$
\sup_{x,y,z}\left|\partial_{\lambda}^l\mathcal{K}^{\pm}(\lambda|x-z|)\mathcal{K}^{\pm}(\lambda|z-y|)\lambda^{n-2m}\right| \les_l \lambda^{\frac{n-1}{2}-l}, \quad \lambda>\lambda_0/2, ~ l=0,1\ldots. 
$$
Next, we shall apply Lemma \ref{lemmaA.222} with $b=\frac{n-1}{2}$ to establish pointwise decay estimates. 

For the long-time decay estimates, we decompose $\mathbb{R}^n$ into two regions $D_1=\{z; \rho \gtrsim|t|\}$ and $D_2=\{z; \rho \les|t|\}$ with  $\rho=|x-z|+|z-y|$.
If $|t|\gtrsim1,\,z\in D_1$, then by Lemma \ref{lemmaA.222}, 
\begin{align*}
|\Omega_{1}^{\pm,high}(t,x,y)|\lesssim & |t|^{-\frac12+\mu_{\frac{n-1}{2}}}\int_{D_1}   \frac{\rho^{-\mu_{\frac{n-1}{2}}}\, |V(z)|}{|x-z|^{\frac{n-1}{2}}|z-y|^{\frac{n-1}{2}}}\d z \\
\les &|t|^{-\frac{n}{2m}}(1+|t|^{-\frac{1}{2m}}|x-y|)^{-\frac{n(m-1)}{2m-1}}  \int_{\R^n}   \frac{|V(z)|}{|x-z|^{\frac{n-1}{2}}} +  \frac{|V(z)|}{|z-y|^{\frac{n-1}{2}}}\d z  \\
\les & |t|^{-\frac{n}{2m}}(1+|t|^{-\frac{1}{2m}}|x-y|)^{-\frac{n(m-1)}{2m-1}}.
\end{align*}
In the above inequalities, we have used Lemma \ref{lem3.10} and the facts 
$$\frac{\rho^{-\mu_{\frac{n-1}{2}}}}{|x-z|^{\frac{n-1}{2}}|z-y|^{\frac{n-1}{2}}} \les \rho ^{-\frac{n(m-1)}{2m-1}} \left(|x-z|^{-\frac{n-1}{2}}+|z-y|^{-\frac{n-1}{2}} \right)$$
and 
$$|t|^{-\frac12+\mu_{\frac{n-1}{2}}} \rho ^{-\frac{n(m-1)}{2m-1}} \les |t|^{-\frac{n}{2m}} (1+|t|^{-\frac{1}{2m}}\rho)^{-\frac{n(m-1)}{2m-1}} \les  |t|^{-\frac{n}{2m}}(1+|t|^{-\frac{1}{2m}}|x-y|)^{-\frac{n(m-1)}{2m-1}},$$
which in turn follows from  the triangle inequality and the condition $|t|^{-\frac{1}{2m}}\rho \gtrsim 1$ in $D_1$.

If $|t|\gtrsim1,\,z\in D_2$, then by Lemma \ref{lemmaA.222},
\begin{align*}
|\Omega_{1}^{\pm,high}(t,x,y)|\lesssim & |t|^{-[\frac{n}{2m}]-1}\int_{D_2}   \frac{\, |V(z)|}{|x-z|^{\frac{n-1}{2}}|z-y|^{\frac{n-1}{2}}}\d z \\
\les &|t|^{-\frac{n}{2m}}(1+|t|^{-\frac{1}{2m}}|x-y|)^{-\frac{n(m-1)}{2m-1}}  \int_{\R^n}  \left( \frac{|V(z)|}{|x-z|^{\frac{n-1}{2}}} +  \frac{|V(z)|}{|z-y|^{\frac{n-1}{2}}}\right)\d z  \\
+& |t|^{-\frac{n}{2m}}(1+|t|^{-\frac{1}{2m}}|x-y|)^{-\frac{n(m-1)}{2m-1}}  \int_{\R^n} 
\frac{|V(z)|}{|x-z|^{\frac{n-1}{2}}|z-y|^{\frac{n-1}{2}}} \d z \\
\les & |t|^{-\frac{n}{2m}}(1+|t|^{-\frac{1}{2m}}|x-y|)^{-\frac{n(m-1)}{2m-1}},
\end{align*}
where we have used the following two  facts: (i) If $|t|^{-\frac{1}{2m}}|x-y| \geq 1$, then
\begin{align*}
|t|^{-\left[\frac{n}{2m}\right]-1}{|x-z|^{-\frac{n-1}{2}}|z-y|^{-\frac{n-1}{2}}} 
\lesssim|t|^{-\frac{n}{2m}}\big(1+|t|^{-\frac{1}{2m}}|x-y|\big)^{-\frac{n(m-1)}{2m-1}} \left(|x-z|^{-\frac{n-1}{2}} + |z-y|^{-\frac{n-1}{2}}\right),
\end{align*}
and (ii) if $|t|^{-\frac{1}{2m}}|x-y| < 1$, then
\begin{align*}
|t|^{-\left[\frac{n}{2m}\right]-1} 
&\lesssim |t|^{-\frac{n}{2m}}\big(1+|t|^{-\frac{1}{2m}}|x-y|\big)^{-\frac{n(m-1)}{2m-1}}.
\end{align*}

For the short-time decay estimates, we decompose $\mathbb{R}^n$ into two regions $D_3=\{z; \rho \gtrsim|t|^{\frac{1}{2m}}\}$ and $D_4=\{z; \rho \les |t|^{\frac{1}{2m}}\}$ with $\rho=|x-z|+|z-y|$.
For $|t|\les 1$, we apply Lemma \ref{lemmaA.222} with $b=\frac{n-1}{2}$ and obtain 
\begin{align*}
|\Omega_{1}^{\pm,high}(t,x,y)|\lesssim & |t|^{-\frac{n+1}{4m}}\int_{\R^n} (1+|t|^{-\frac{1}{2m}}\rho)^{-\mu_{\frac{n-1}{2}}}  \frac{|V(z)|}{|x-z|^{\frac{n-1}{2}}|z-y|^{\frac{n-1}{2}}}\d z  \\
\les &|t|^{-\frac{n}{2m}}(1+|t|^{-\frac{1}{2m}}|x-y|)^{-\frac{n(m-1)}{2m-1}}  \int_{D_3}   \frac{|V(z)|}{|x-z|^{\frac{n-1}{2}}} +  \frac{|V(z)|}{|z-y|^{\frac{n-1}{2}}}\d z  \\
+& |t|^{-\frac{n}{2m}}(1+|t|^{-\frac{1}{2m}}|x-y|)^{-\frac{n(m-1)}{2m-1}}  \int_{D_4}   \frac{|V(z)|}{|x-z|^{\frac{n-1}{2}}|z-y|^{\frac{n-1}{2}}} \d z \\
\les & |t|^{-\frac{n}{2m}}(1+|t|^{-\frac{1}{2m}}|x-y|)^{-\frac{n(m-1)}{2m-1}}.
\end{align*}

Therefore, we complete the proof of  Proposition \ref{pro-est-for-omega-j}.
\end{proof}

The final step involves estimating the kernel of $\Omega_{r}^{\pm,high}$, which admits the oscillatory integral representation
\begin{equation}\label{omegaKr}
\Omega_{r}^{\pm,high}(t,x,y) = \frac{m}{\pi\mathrm{i}}\int_0^\infty e^{-\mathrm{i}t\lambda^{2m} \pm \lambda(|x|+|y|)} \widetilde{\chi}(\lambda^{2m})  T_\pm(\lambda,x,y)\d\lambda,
\end{equation}
where $T_\pm(\lambda,x,y)$ is the sesquilinear form 
\begin{equation*}
T_\pm(\lambda,x,y)=\Big\langle R_0^\pm(\lambda^{2m})VR^\pm(\lambda^{2m})V\big(R_0^\pm(\lambda^{2m})(\cdot-y)e^{\mp\mathrm{i}\lambda|y|}\big), VR_0^\mp(\lambda^{2m}(\cdot-x)e^{\pm\mathrm{i}\lambda|x|}\big) \Big\rangle \lambda^{2m-1},
\end{equation*}
defined through the duality between $L^2_{-\frac12-}$ and $L^2_{\frac12+}$.
In order to prove \eqref{estOmegaK}, we need to first establish pointwise estimates for $T_\pm(\lambda,x,y)$. To this end, we need the following lemmas, the first of which is on the limiting absorption principle.

\begin{lemma}\label{lmdR}
For a fixed $s\in\mathbb{N}_0$, we have 
\begin{equation}\label{eq1.1.1.1}
\big\|\partial_\lambda^s (R_0^\pm(\lambda^{2m}))\big\|_{L_{s+\frac12+}^{2}\rightarrow L_{-(s+\frac12)-}^{2}}\lesssim\lambda^{-(2m-1)},\quad\lambda\gtrsim1,
\end{equation}
and if $|V(x)|\lesssim\langle x\rangle^{-(s+1)-}$, we also have 
\begin{equation}\label{eq1.1.1.1-1}
\big\|\partial_\lambda^s (R^\pm(\lambda^{2m}))\big\|_{L_{s+\frac12+}^{2}\rightarrow L_{-(s+\frac12)-}^{2}}\lesssim\lambda^{-(2m-1)},\quad\lambda\gtrsim1.
\end{equation}
\end{lemma}
\begin{proof}
It is well known (see \cite{KK}) that
\begin{equation*}
\|\partial_z^s\mathfrak{R}_0(z)\|_{L_{s+\frac12+}^{2}\rightarrow L_{-(s+\frac12)-}^{2}}\lesssim z^{\frac{-1-s}{2}},\quad|z|\gtrsim1,\,z\in\mathbb{C}\setminus [0,+\infty).
\end{equation*}
Thus, the first statement follows by the limiting absorption principle and \eqref{equ2.1}.
		
Next, by the resolvent identity
$$R^\pm(\lambda^{2m})=(I+R_0^\pm(\lambda^{2m})V)^{-1}R_0^\pm(\lambda^{2m}),$$
$\partial_\lambda^s(R^\pm(\lambda^{2m}))$ can be written as a linear combination of
$$\prod_{j=1}^{p}\Big(I+R_0^\pm(\lambda^{2m})V)^{-1}\partial_\lambda^{s_j}(R_0^\pm(\lambda^{2m}))V\Big)(I+R_0^\pm(\lambda^{2m})V)^{-1}\partial_\lambda^{s_{p+1}}(R_0^\pm(\lambda^{2m})),$$
where $0\leq p\leq s$ and $\sum_{j=1}^{p+1}s_j=s$. Note that $(I+R_0^\pm(\lambda^{2m})V)^{-1}$ is uniformly bounded for $\lambda$ in $L^2_{-(s_j+\frac12)-}$ if $|V(x)|\lesssim\langle x\rangle^{-(s_j+1)-},$ and
$$\|\partial_\lambda^{s_j}(R_0^\pm(\lambda^{2m}))V\|_{L^2_{-(s_{j+1}+\frac12)-}\rightarrow L^2_{-(s_{j}+\frac12)-}}
\lesssim\lambda^{-(2m-1)},\quad\lambda\gtrsim 1,$$
by \eqref{eq1.1.1.1} if $|V(x)|\lesssim\langle x\rangle^{-(s_j+s_{j+1}+1)-}$. Then we have \eqref{eq1.1.1.1-1}.
\end{proof}

\begin{lemma}\label{lmdTlambda}
 If $|V(x)|\lesssim\langle x\rangle^{-(n+\frac 32)-}$, then
 \begin{equation*}
 \left|\partial_\lambda^l T_\pm(\lambda,x,y)\right|\lesssim\lambda^{\frac{n-1}{2}-l}\langle x\rangle^{-\frac{n-1}{2}}\langle y\rangle^{-\frac{n-1}{2}},\quad x,y\in\mathbb{R}^n,\,\lambda\gtrsim1.
 \end{equation*}
 for $l=0,1,\ldots,\frac{n+1}{2}$.
\end{lemma}

\begin{proof}
By using  the expression \eqref{eq:repre for R}, we  express  $T_\pm(\lambda,x,y)$ in the following form 
\begin{equation*}
 T_\pm(\lambda,x,y)=\lambda^{n-2m}\Big\langle VR_0^\pm(\lambda^{2m})VR^\pm(\lambda^{2m})V \frac{e^{\pm\mathrm{i}\lambda(|y-\cdot|-|y|)} \mathcal{K}^{\pm}(\lambda|y-\cdot|)}{|y-\cdot|^{\frac{n-1}{2}}},~  \frac{e^{\mp\mathrm{i}\lambda(|x-\cdot|-|x|)} \mathcal{K}^{\mp}(\lambda|y-\cdot|)}{|x-\cdot|^{\frac{n-1}{2}}} \Big\rangle.
\end{equation*}

On the one hand, it follows from \eqref{eq:est for mathcalK} and Lemma \ref{lem3.10} that for $\lambda \ \gtrsim1$,
\begin{align*}
\left\|\partial_\lambda^l \frac{e^{\pm\mathrm{i}\lambda(|y-\cdot|-|y|)} \mathcal{K}^{\pm}(\lambda|y-\cdot|)}{|y-\cdot|^{\frac{n-1}{2}}}\right\|_{L^2_{-\frac{n+1}{2}-l}} 
\les & \left\| \frac{\langle \cdot \rangle^l}{|y-\cdot|^{\frac{n-1}{2}}}\right\|_{L^2_{-\frac{n+1}{2}-l}} \\ 
\les & \langle y \rangle^{-\frac{n-1}{2}}.
\end{align*}

On the other hand, the estimate \eqref{eq1.1.1.1-1} shows that, for $\lambda\gtrsim1$,
$$ \Big\| \partial_\lambda^s \big(VR_0^\pm(\lambda^{2m})VR^\pm(\lambda^{2m})V\big) \Big\|_{L_{-\frac{n+1}{2}-l}^{2}\rightarrow L_{\frac{n+1}{2}+l}^{2}}\lesssim\lambda^{-(4m-2)},\quad~ l=0,1,2,$$
    holds provided $|V(x)|\les \langle x\rangle^{-(\frac n2+s+l+1)-}$. Thus applying Leibniz's rule, we derive that, for $ \lambda\gtrsim1$, 
\begin{align*}
\left|\partial_\lambda^l T_\pm(\lambda,x,y)\right|\lesssim &
\lambda^{n-2m}
\sum_{l_1+l_2+l_3\le l} \left\|\partial_\lambda^{l_1} \frac{e^{\pm\mathrm{i}\lambda(|y-\cdot|-|y|)} \mathcal{K}^{\pm}(\lambda|y-\cdot|)}{|y-\cdot|^{\frac{n-1}{2}}}\right\|_{L^2_{-\frac{n+1}{2}-l_1}} 
 \\
\times\Big\|\partial_\lambda^{l_2} \big(VR_0^\pm&(\lambda^{2m})VR^\pm(\lambda^{2m})V\big)\Big\|_{L_{-\frac{n+1}{2}-l_1}^{2}\rightarrow L_{\frac{n+1}{2}+l_3}^{2}}
\left\|\partial_\lambda^{l_3} \frac{e^{\mp\mathrm{i}\lambda(|x-\cdot|-|x|)} \mathcal{K}^{\pm}(\lambda|x-\cdot|)}{|x-\cdot|^{\frac{n-1}{2}}}\right\|_{L^2_{-\frac{n+1}{2}-l_3}} \\
\lesssim & \lambda^{\frac{n-1}{2}-l} \langle x\rangle^{-\frac{n-1}{2}}\langle y\rangle^{-\frac{n-1}{2}},
\end{align*}
where we have used the fact $n-2m-(4m-2)\le 1-2m\le  \frac{n-1}{2}-l$ for all $l\le \frac{n+1}{2}$. 
This completes the proof.
\end{proof}

Now we are in the position  to prove Proposition \ref{pro-5-1-815}.

\begin{proof}[Proof of Proposition \ref{pro-5-1-815}]

Building upon Lemma \ref{lmdTlambda}, we apply estimate \eqref{longime} from Lemma \ref{lemmaA.222} to \eqref{omegaKr} with the parameter  $b=\frac{n-1}{2}$ to derive the following long-time decay estimates
\begin{equation*}
\begin{split}
|\Omega_{r}^{\pm,high}(t,x,y)|\lesssim&\begin{cases}
|t|^{-\frac12+\mu_{\frac{n-1}{2}}}(|x|+|y|)^{-\mu_{\frac{n-1}{2}}}\langle x\rangle^{-\frac{n-1}{2}}\langle y\rangle^{-\frac{n-1}{2}},\quad&|t|\gtrsim1,\,|x|+|y|\gtrsim|t|,\\
|t|^{-[\frac{n}{2m}]-1}\langle x\rangle^{-\frac{n-1}{2}}\langle y\rangle^{-\frac{n-1}{2}},&|t|\gtrsim1,\,|x|+|y|\lesssim|t|.
\end{cases}
\end{split}
\end{equation*}
A direct computation shows that for $|t| \gtrsim 1$ and $|x| + |y| \gtrsim |t|$, which implies $|x| + |y| \gtrsim |t|^{\frac{1}{2m}}$, we have
\begin{align*}
|t|^{-\frac12+\mu_{\frac{n-1}{2}}}(|x|+|y|)^{-\mu_{\frac{n-1}{2}}}\langle x\rangle^{-\frac{n-1}{2}}\langle y\rangle^{-\frac{n-1}{2}} 
\les & |t|^{-\frac12+\mu_{\frac{n-1}{2}}}(|x|+|y|)^{-\mu_{\frac{n-1}{2}}-\frac{n-1}{2}} \\ 
=& |t|^{-\frac{n}{2m}} \left( |t|^{-\frac{1}{2m}}(|x|+|y|)\right)^{-\frac{n(m-1)}{2m-1}} \\
\les& |t|^{-\frac{n}{2m}} \left(1+|t|^{-\frac{1}{2m}}(|x|+|y|)\right)^{-\frac{n(m-1)}{2m-1}} \\
\le & |t|^{-\frac{n}{2m}} \left(1+|t|^{-\frac{1}{2m}}|x-y|\right)^{-\frac{n(m-1)}{2m-1}}.
\end{align*}
Similarly, if  $|t|\gtrsim1$ and $|x|+|y|\les |t|$, then 
\begin{align*}
|t|^{-[\frac{n}{2m}]-1}\langle x\rangle^{-\frac{n-1}{2}}\langle y\rangle^{-\frac{n-1}{2}} \les &
\begin{cases}
 |t|^{-1}\big(|t|^{-1}(|x|+|y|)\big)^{-\max\{\mu_{\frac{n-1}{2}}, 0\}}\langle x\rangle^{-\frac{n-1}{2}}\langle y\rangle^{-\frac{n-1}{2}} \quad &\text{if}~ |x|+|y|> |t|^{\frac {1}{2m}} \\ 
 |t|^{-\frac{n}{2m}} \quad &\text{if}~ |x|+|y|\le |t|^{\frac {1}{2m}} \end{cases} \\
 \les& |t|^{-\frac{n}{2m}} \left(1+|t|^{-\frac{1}{2m}}(|x|+|y|)\right)^{-\frac{n(m-1)}{2m-1}} \\
\le & |t|^{-\frac{n}{2m}} \left(1+|t|^{-\frac{1}{2m}}|x-y|\right)^{-\frac{n(m-1)}{2m-1}}.
\end{align*}
Combining the above three inequalities gives 
\begin{equation*}
\begin{split}
|\Omega_{r}^{\pm,high}(t,x,y)|\lesssim|t|^{-\frac{n}{2m}}(1+|t|^{-\frac{1}{2m}}|x-y|)^{-\frac{n(m-1)}{2m-1}},\quad|t|\gtrsim1,\,x,y\in\mathbb{R}^n.
\end{split}
\end{equation*}

Moreover, by applying Lemma \ref{lmdTlambda} and invoking estimate \eqref{shorttime} from  Lemma \ref{lemmaA.222} to \eqref{omegaKr} with  $b=\frac{n-1}{2}$,  we obtain the following short time estimate 
\begin{align*}\label{eq-Ome-8-15-2}
	\begin{split}
		|\Omega_{r}^{\pm,high}(t,x,y)|\lesssim&|t|^{-\frac{1+\frac{n-1}{2}}{2m}}\left(1+|t|^{-\frac{1}{2m}}(|x|+|y|)\right)^{-\mu_{\frac{n-1}{2}}}\langle x\rangle^{-\frac{n-1}{2}}\langle y\rangle^{-\frac{n-1}{2}}\\
		\lesssim&|t|^{-\frac{n}{2m}}(1+|t|^{-\frac{1}{2m}}|x-y|)^{-\frac{n(m-1)}{2m-1}},\quad0<|t|\lesssim1,\,x,y\in\mathbb{R}^n.
	\end{split}
\end{align*}
Combining the above two estimates yields \eqref{estOmegaK}. Therefore, the proof of Proposition \ref{pro-5-1-815} is complete.
\end{proof}

\section{Proof of Corollary \ref{cor-Lp}}\label{section4.4}

\noindent\emph{Proof of  statement \emph{(\romannumeral1)}.}
We first consider the case that $({1\over p},{1\over q})$ lies in the line segment BC but $(p,q)\ne(1, \tau_m)$, i.e., $p=1$ and $q\in(\tau_m, +\infty]$ with $\tau_m=\frac{2m-1}{m-1}$. In this case one checks that $\frac{n(m-1)}{2 m-1}q>n$.
If we set
\begin{equation*}
  I(t,x-y)=|t|^{-\frac{n}{2 m}}\left(1+|t|^{-\frac{1}{2 m}}|x-y|\right)^{-\frac{n(m-1)}{2 m-1}},
\end{equation*}
then by \eqref{equ1.2.1-231} of Theorem \ref{thm1.1},  \eqref{equ1.2.1-233} of Theorem \ref{thm1.1-723}, and by the Young's inequality, we have
\begin{align*}
\|e^{-\i tH}P_{ac}(H)\|_{L^1-L^q}&\lesssim\|I(t,\cdot)\|_{L^q}\\
&\lesssim Ct^{-\frac{n}{2m}}\left(\int_{\mathbb{R}^n}{(1+|t|^{-1/{2m}}|x|)^{-\frac{n(m-1)}{2 m-1}q}\,dx}\right)^{\frac 1q}\lesssim |t|^{-\frac{n}{2m}(1-\frac{1}{q})}.
\end{align*}
At the endpoint $(p,q)=(1,\tau_m)$, the above estimate with $L^{\tau_m}$ replaced by $L^{\tau_m,\infty}$ (or $L^1$ replaced by $H^1$) follows from the weak Young's inequality (see e.g. \cite[p.22]{Gro}) and the boundedness of the Riesz potential $f\mapsto|\cdot|^{-\frac{n(m-1)}{2 m-1}}*f$. 

Next, if $(\frac1p,\frac1q)$ lies in the interior of triangle ABC, we deduce \eqref{equ-Lp-Lq} from the Riesz-Thorin interpolation and the fact that
$\| e^{-\i tH}\|_{L^2-L^2}=1$.
On the other hand, when $(\frac1p,\frac1q)$ lies in the edge AB, the above estimate follows from the Marcinkiewicz interpolation theorem (see e.g. \cite[p.31]{Gro}). Therefore estimate \eqref{equ3.5} is valid if $(\frac1p,\frac1q)$ lies in the triangle $ABC$, while the case of triangle $ADC$ follows by duality.

\noindent\emph{Proof of  statement \emph{(\romannumeral2)}.}
If $\alpha>0$, then the smoothing operator $H^\frac{\alpha}{2m} e^{-\i tH}P_{ac}(H)$ can be defined through spectral calculus, and can also be split into low and high energy parts.
\begin{equation*}
	\begin{split}
			H^\frac{\alpha}{2m} e^{-\i tH}P_{ac}(H)&=\frac{1}{2\pi i} \int_{0}^{+\infty} \lambda^\frac{\alpha}{2m} e^{-\i  t \lambda}\left(R^{+}(\lambda)-R^{-}(\lambda)\right) (\chi(\lambda)+\tilde{\chi}(\lambda))\, \d \lambda,\\
		& :=H^\frac{\alpha}{2m}e^{-\i tH}\chi(H)P_{ac}(H)+H^\frac{\alpha}{2m}e^{-\i tH}\tilde{\chi}(H)P_{ac}(H),
	\end{split}
\end{equation*}
where $\chi$ is defined by \eqref{equ4.cutoff} and $\tilde{\chi}(\lambda)=1-\chi(\lambda)$.

Let $K^{low}_{\alpha}(t,x,y)$ and $K^{high}_{\alpha}(t,x,y)$ be kernels respectively of the low and high energy parts of $H^\frac{\alpha}{2m} e^{-\i tH}P_{ac}(H)$.
It is now reasonable to discuss the kernels for these two parts in a completely parallel way, because the only relevant changes are the function classes to which the amplitude functions in the oscillatory integrals in $\lambda$ belong, and similar changes happen after whatever further decomposition or treatment. The oscillatory integral estimates Lemma \ref{lemmaA.2} and Lemma \ref{lemmaA.222} are then still applicable with a few adjustments, where one should be slightly more careful when checking the differentiability of the amplitude functions in every detail.

It turns out that a minor change of the decay assumption on $V$ suffices to give the following result,
%
%
%
\begin{equation*}\label{estalphalow}
	\left|K_\alpha^{low}(t,x,y)\right|\lesssim(1+|t|)^{-h(m,n,\mathbf{k})-\frac{\alpha}{2m}}(1+|t|^{-\frac{n+\alpha}{2m}})\left(1+|t|^{-\frac{1}{2m}}|x-y|\right)^{-\frac{n(m-1)-\alpha}{2m-1}},\quad t\neq0,\,x,y\in\mathbb{R}^n,
\end{equation*}
and we only point out a few places most relevant. Under the stronger decay assumption, we are able to show the following:
\begin{itemize}
	\item \eqref{equ4.1.1} in Lemma \ref{prop4.1} holds for $l=0,\cdots,\frac{n+1}{2}$.
	
	\item In Proposition \ref{lemma4.2}, the  statements \eqref{equ4.3}, \eqref{equ4.4}, \eqref{equ4.4.4}, \eqref{equ4.4.5}, \eqref{equ4.34.2} and \eqref{equ4.34.3} remain valid in their $\frac{n+1}{2}$-times differentiable versions. 

	\item All relevant details when the above changes are applied in the proof for the low energy part in section \ref{section-4} are correspondingly adjusted, which relates to the appropriate choice of $K$ in the application of Lemma \ref{lemmaA.2}, and this is why the new  $\beta$  is needed where relevant estimates take place.
\end{itemize}

To establish the high energy estimate
\begin{equation*}\label{estalphahigh}
	\left|K_\alpha^{high}(t,x,y)\right|\lesssim|t|^{-\frac{n+\alpha}{2m}}\left(1+|t|^{-\frac{1}{2m}}|x-y|\right)^{-\frac{n(m-1)-\alpha}{2m-1}},\quad t\neq0,\,x,y\in\mathbb{R}^n,
\end{equation*}
we only remark that to prove a relevant version of the bounds \eqref{eq-est-for-omega-j} and \eqref{estOmegaK}, we need to apply Lemma \ref{lmdTlambda} with $l=\frac{n+1}{2}$ where the decay assumption of $V$ will allow such change, and the rest of necessary adjustments are obvious with the application of Lemma \ref{lemmaA.222}.

\begin{appendix}
	\renewcommand{\appendixname}{Appendix\,\,}

\section{Technical lemmas and proof of Proposition \ref{lemma4.2}}\label{app-002-8-9}



To prove Proposition \ref{lemma4.2}, we first establish two technical lemmas concerning functions with certain vanishing moments.

\begin{lemma}\label{lemmaA.1}
Let $f\in L_{\sigma}^2({\R}^n)$ with $\sigma>\max\{j+1,p\}+n/2$ for some  $\frac{1-n}{2}\leq p\in\mathbb{Z}$ and $j\in\mathbb{N}_0$. Suppose $\left\langle x^\alpha, f(x) \right\rangle=0$ for all $|\alpha|\le j$. Then, we have
\begin{equation}\label{eqA.1.1}
	\left| \int f(y)|x-y|^p\d y\right| \lesssim\left\|f \right\|_{L_{\sigma}^2} \left\langle  x\right\rangle ^{p-j-1},\quad x\in\mathbb{R}^n.
\end{equation}
\end{lemma}

\begin{proof}
If $|x|\leq1$, \eqref{eqA.1.1} holds by Schwartz inequality and the assumption $p-\sigma<-\frac{n}{2}$. Therefore it suffices to
consider the case $|x|\geq1$ and we divide the proof into three cases. \\

\noindent\textbf{Case 1: $p\geq0$ is even.} The estimate \eqref{eqA.1.1} follows by applying the expansion
\begin{equation}\label{eqA.2}
|y-x|^{p} = \sum_{|\alpha|+|\beta|=p} (-1)^{|\beta|} C_{\alpha,\beta} x^{\alpha} y^{\beta},
\end{equation}
to the left-hand side of \eqref{eqA.1.1}, together with the observations that
\begin{equation*}
\begin{cases}
\left| \langle x^{\alpha} y^{\beta}, f(y) \rangle \right| \lesssim \|f\|_{L_{\sigma}^2} \langle x \rangle^{|\alpha|} \leq \|f\|_{L_{\sigma}^2} \langle x \rangle^{p-j-1}, & \text{if } |\beta| > j, \\
\langle x^{\alpha} y^{\beta}, f(y) \rangle = 0, & \text{if } |\beta| \leq j.
\end{cases}
\end{equation*} \\

\noindent\textbf{Case 2: $p\geq0$ is old.} We begin by decomposing $|x-y|^p$ into
\begin{equation*}
|x-y|^p = \left(|x-y|^p - |x||x-y|^{p-1}\right) + |x||x-y|^{p-1}.
\end{equation*}
Since $p-1$ is even, Case 1 yields
\begin{equation}\label{equ4.707}
\left|\langle |x||x-\cdot|^{p-1}, f(\cdot)\rangle\right| \lesssim \langle x\rangle^{p-j-1}.
\end{equation}

We need to use the algebraic identity
\begin{align}\label{eq:algebraic-identity}
\frac{1}{a+b} = \sum_{j=0}^{k-1}\frac{(-b)^j}{a^{j+1}} + \frac{(-b)^k}{a^k(a+b)}.
\end{align}
We also need the following algebraic identity
\begin{align}\label{eq:algebraic-identity-2}
\frac{1}{(a+b)^k} =\sum_{j=1}^{k}\frac{(a-b)}{2^{j}(a+b)^{k-j+1} a^{j}}+\frac{1}{(2a)^k},
\end{align}
and the identity 
$$|x-y|-|x|=\frac{-2xy+|y|^2}{|x-y|+|x|}.$$

Fixing $l,\ q,\ j\in \N_0$, $\gamma\in {\N_0}^n$ with $|\gamma|\le j$, and denoting $\tau=|\gamma|+l-q$, we claim that
\begin{equation}\label{claimA.1}
	\begin{aligned}
		\frac{|x|^ly^{\gamma}}{(|x|+|x-y|)^q}=\sum\limits_{s=1}\limits^{q+j-|\gamma|+1}\sum\limits_{|\alpha_1|\in\{j+1, j+2\} \atop
			|\alpha_1|+h-s=\tau} L_{s,\alpha_1}(x) \frac{|x|^hy^{\alpha_1}}{(|x|+|x-y|)^s}
		+\sum\limits_{|\alpha_2|\le  j\atop i+|\alpha_2|=\tau}L_{\alpha_2}(x)|x|^{i}y^{\alpha_2},
	\end{aligned}
\end{equation}	
where $|L_{s,\alpha_1}(x)|,\ |L_{\alpha_2}(x)|\lesssim 1$.	Rigorously speaking, we first suppose that $j - |\gamma| = 0$, then by \eqref{eq:algebraic-identity-2}, we have
\begin{equation}\label{eq:intuition identity}
\begin{aligned}
\frac{|x|^l y^\gamma}{(|x| + |x - y|)^q} 
&= \sum_{i=1}^q \frac{1}{2^{i - 1}}  \frac{|x|^{l - i} y^\gamma (|x| - |x - y|)}{2(|x| + |x - y|)^{q - i + 1}} + \frac{|x|^{l - q} y^\gamma}{2^q} \\
&= \sum_{i=1}^q \frac{1}{2^{i - 1}}  \frac{|x|^{l - i} y^\gamma (2x\cdot y - |y|^2)}{2(|x| + |x - y|)^{q - i + 2}} + \frac{|x|^{l - q} y^\gamma}{2^q} \\
&= \sum_{i=1}^q \frac{1}{2^{i - 1}} \sum_{s=1}^n \left( \frac{x_s}{|x|} \frac{|x|^{l - i + 1} y^\gamma y_s}{(|x| + |x - y|)^{q - i + 2}} - \frac{|x|^{l - i} y^\gamma y_s^2}{(|x| + |x - y|)^{q - i + 2}} \right) + \frac{|x|^{l - q} y^\gamma}{2^q}, 
\end{aligned}
\end{equation}
which implies that the assertion holds for $j - |\gamma| = 0$.
In the above expressions, $x_s$ and $y_s$ denote the $s$-th components of $x$ and $y$ respectively. Now, we assume that \eqref{claimA.1} holds when \( j - |\gamma| = 0, \cdots, \mu < j \), and consider the case where \( j - |\gamma| = \mu + 1 \). In this scenario, we apply \eqref{eq:intuition identity} to 
$$\frac{|x|^hy^{\gamma}}{(|x|+|x-y|)^s}$$
in  \eqref{claimA.1}, and note that 
\[
\frac{|x|^{l-i+1} y^{\gamma} y_s}{(|x| + |x - y|)^{q-i+2}} \quad \text{and} \quad \frac{|x|^{l-i} y^{\gamma} y_s^2}{(|x| + |x - y|)^{q-i+2}},
\]
satisfy all the assumptions required in our assertion, except that \( j - |\gamma| - 1, j - |\gamma| - 2 \leqslant \mu \). Therefore, using induction together with  proves \eqref{claimA.1} for \( j - |\gamma| = \mu + 1 \).

We expand
\begin{equation*}
\begin{aligned}
|x-y|^p - |x||x-y|^{p-1}
&= (|x-y| - |x|)|x-y|^{p-1}
=\frac{-2x\cdot y+|y|^2}{|x-y|+|x|} |x-y|^{p-1} \\
&= \sum_{|\alpha|+|\beta|=p-1} (-1)^{|\beta|} C_{\alpha,\beta} \sum_{i=1}^n \left(\frac{2x^\alpha x_i}{|x|^{|\alpha|+1}} \frac{|x|^{|\alpha|+1}y^\beta y_i}{|x|+|x-y|} - \frac{x^\alpha}{|x|^{|\alpha|}} \frac{|x|^{|\alpha|}y^\beta y_i^2}{|x|+|x-y|}\right).
\end{aligned}
\end{equation*} 
We apply \eqref{claimA.1}  to 
$$ \frac{|x|^{|\alpha|+1}y^\beta y_i}{|x|+|x-y|}\quad\text{and}\quad\frac{|x|^{|\alpha|}y^\beta y_i^2}{|x|+|x-y|}$$ 
in the above. For the term $\frac{|x|^{|\alpha|+1}y^{\beta}{y_i}}{|x|+|x-y|}$, we apply \eqref{claimA.1} with $l=|\alpha|+1$, $\gamma=\beta+e_i$ and $q=1$;
for the term $\frac{|x|^{|\alpha|}y^{\beta}{y_i}^2}{|x|+|x-y|}$, we apply \eqref{claimA.1} with $l=|\alpha|$, $\gamma=\beta+2e_i$ and $q=1$. In particular, it follows that $\tau=p$, and $h-s=p-|\alpha_1|\le p-j-1$.
Since $|\alpha_1|\in\{j+1, j+2\}$ in \eqref{claimA.1},
it follows that
\begin{equation*}
	\left| \frac{|x|^hy^{\alpha_1}}{(|x|+|x-y|)^s}\right|\le |x|^{p-j-1}\langle y\rangle^{j+1}.
\end{equation*}
By the vanishing assumption on $f$,  
$$\Big\langle \sum\limits_{|\alpha_2|\le  j\atop i+|\alpha_2|=\tau}L_{\alpha_2}(x)|x|^{i}y^{\alpha_2}, f(y) \Big\rangle=0.$$
Then we have
\begin{equation*}\label{equ4.708}
	\begin{aligned}
		\left| \left\langle |x-\cdot|^p-|x||x-\cdot|^{p-1}, \ f(\cdot) \right\rangle\right|
		\lesssim& \left\|f \right\|_{L_{\sigma}^2}\|\left\langle y\right\rangle^{-\sigma}\left\langle y\right\rangle^{j+1} \|_{L^2} \left\langle x\right\rangle^{p-j-1}\\
		\lesssim& \left\|f \right\|_{L_{\sigma}^2} \left\langle x\right\rangle^{p-j-1},
	\end{aligned}
\end{equation*}
which together with \eqref{equ4.707} yields \eqref{eqA.1.1}. \\

\noindent\textbf{Case 3:} $\frac{1-n}{2}\le  p<0$.
Set $k=-p$, we have $0<k\le  \frac{n-1}{2}$. For any fixed $j\in\N_0$, applying \eqref{eq:algebraic-identity}, we have the following identity
\begin{equation}\label{eqA.3}
	|x-y|^{-k}=\sum_{l=0}^{k-1}C_{l}\frac{(|x|-|x-y|)^{j+1}}{|x|^{j+l+1}|x-y|^{k-l}}+\sum_{l=0}^{j}C'_{l}\frac{(|x|-|x-y|)^{l}}{|x|^{k+l}},
\end{equation}
for some $C_{l}, C_{l}^{'}>0$.
On one hand, note that by Lemma \ref{lem3.10} and the assumption on $\sigma$, we have
\begin{equation*}\label{eqA.333}
	\begin{aligned}
		\left| \left\langle \frac{(|x|-|x-\cdot|)^{j+1}}{|x|^{j+l+1}|x-\cdot|^{k-l}}, \ f(\cdot) \right\rangle\right|
		\lesssim& \left\|f \right\|_{L_{\sigma}^2}\left\langle x\right\rangle^{-j-l-1}\left(\int_{\mathbb{R}^n}{\langle y\rangle^{-2(\sigma-j-1)}|x-y|^{-2(k-l)}\d y}\right)^{\frac12}\\
		\lesssim& \left\|f \right\|_{L_{\sigma}^2} \left\langle x\right\rangle^{p-j-1},
	\end{aligned}
\end{equation*}
On the other hand, the binomial expansion gives
$$\frac{(|x|-|x-y|)^{l}}{|x|^{k+l}}=\sum_{s=0}^{l}\frac{(-1)^{s}l!}{s!(l-s)!}\frac{|x|^{s}|x-y|^{l-s}}{|x|^{k+l}}.$$
Then by \textbf{Case 1} and \textbf{Case 2}, we have for each $s\in\{0,\cdots,l\}$ that
\begin{equation*}
	\begin{aligned}
		|x|^{s-k-l}\left| \left\langle |x-\cdot|^{l-s}, \ f(\cdot) \right\rangle\right|
		\lesssim& \left\|f \right\|_{L_{\sigma}^2} \left\langle x\right\rangle^{p-j-1}.
	\end{aligned}
\end{equation*}
In the view of \eqref{eqA.3}, we conclude \eqref{eqA.1.1} by combining the above two estimates.
\end{proof}

By the definition of $Q_j$ for $j\in J_{\k}$, we have for any function $f$ in $L^2$ that
$$\langle x^{\alpha}, vQ_{j} f \rangle=0, \quad \text{for all $|\alpha|< \delta(j)$},$$
where
\begin{equation}\label{eq-delta-j-8-17}
    \delta(j)=\max\{0, [j+\frac 12]\}.
\end{equation}
Then by Lemma \ref{lemmaA.1}, we have
\begin{align*}
  \left\|Q_jv|x-\cdot|^p\right\|_{L^2}^2 = \left\langle |x-\cdot|^p, vQ_jv|x-\cdot|^p\right\rangle
&\lesssim \left\|vQ_jv|x-\cdot|^p\right\|_{L_{\sigma}^2} \left\langle  x\right\rangle ^{p-\delta(j)}\\
& \lesssim \left\|Q_jv|x-\cdot|^p\right\|_{L^2}\left\langle  x\right\rangle ^{p-\delta(j)},
\end{align*}
where in the last inequality, we have used $|v| \lesssim \langle x\rangle^{-\beta/2}$ by \eqref{assum1}.
Thus, if $p\le 2j$, we have 
\begin{equation}\label{eqS-j-v-p}
   \|Q_jv|x-\cdot|^p\|_{L^2}\lesssim \left\langle  x\right\rangle ^{p-\delta(j)}.
\end{equation}
Further, we have
\begin{lemma}\label{prop4.1}
Let $-\frac{n-1}{2}\le p\le j$ with $j\in J_{\mathbf{k}}$. Then for $\lambda\in (0,1)$,  $k\in I^{\pm}$ (see \eqref{equ2.1.1'}), the following uniform estimates
\begin{equation}\label{equ4.1.1}
\sup_{s\in [0,1]}\left\| \partial_{\lambda}^l Q_j \left(v(\cdot)|x-\cdot|^p e^{ \i s \lambda_{k}|x-\cdot|\mp \i s\lambda|x|}\right)\right\|_{L^2}
\lesssim
\begin{cases}
 \lambda^{-l}\left\langle x \right\rangle^{p-\delta(j)},\quad &\lambda\left\langle x \right\rangle\le 1,\\
 \lambda^{\delta(j)-l}\left\langle x \right\rangle^{p},\quad &\lambda\left\langle x \right\rangle>1,	
\end{cases}
\end{equation}
 hold for all $0\le l\le[\frac{n}{2m}]+1$,  where  $\delta(j)$ is given by \eqref{eq-delta-j-8-17}.
\end{lemma}

\begin{proof}
 First note that $e^{\i \lambda_k|x|}=\exp(\i \lambda e^{\frac{\i k\pi}{m}})$ when $k\in I^{\pm}$. It follows that 
 $$|\partial_{\lambda}^{l}\left( e^{ \i s(\lambda_k\mp\lambda)|x|}\right)|\lesssim \lambda^{-l}.$$
 This, together with the triangle inequality $||x-y|-|x||\le |y|$ yields
\begin{equation}\label{equ4.1.2}
	\begin{array}{ll}
	\left|\partial_{\lambda}^{l}\left( e^{ \i s \lambda_{k}|x-y|\mp  \i s \lambda|x|}\right)\right|
	=\left|\partial_{\lambda}^{l}\left( e^{ \i s \lambda_{k}(|x-y|-|x|)} e^{ \i s(\lambda_k\mp\lambda)|x|}\right)\right| \lesssim_{l}\lambda^{-l}\langle y\rangle^{l},
    \end{array}
\end{equation}
which implies the estimate if $\delta(j)=0$

For $j\in J_{\k}$ such that $\delta(j)>0$, the Taylor formula for $e^{ \i  s \lambda_{k}(|x-y|-|x|)}$ gives
\begin{equation*}\label{equ4.1.333}
	e^{ \i s \lambda_{k}(|x-y|-|x|)}=\sum_{\gamma=0}^{\delta(j)-1} \frac{1}{\gamma!}\left( \i  \lambda_{k} s(|x-y|-|x|)\right)^{\gamma}+\tilde{r}_{j}\left(\lambda_{k} s(|x-y|-|x|)\right),
\end{equation*}
where
$$\tilde{r}_{j} \left(\lambda_{k} s(|x-y|-|x|)\right)=\frac{1}{(\delta(j)-1)!}\left( \i  \lambda_{k} s(|x-y|-|x|)\right)^{\delta(j)} \int_{0}^{1} e^{ \i  \lambda_{k} s u(|x-y|-|x|)} \left (1-u\right )^{\delta(j)-1}\d u.$$
Meanwhile, a direct computation shows for $l\in \N_0$ that
\begin{equation*}
	\left|\partial_{\lambda}^{l} \tilde{r}_{j}\left(\lambda_{k}s(|x-y|-|x|)\right)\right| \lesssim_{l} \lambda^{\delta(j)-l}\langle y\rangle^{\max\{\delta(j),l\}}.
\end{equation*}
Then it follows that
\begin{equation}\label{equ4.333}
  \begin{aligned}
     \left\|v(\cdot)|x-\cdot|^p\partial_{\lambda}^{l} \tilde{r}_{j}\left(\lambda_{k} s(|x-\cdot|-|x|)\right)\right\|_{L^2} & \lesssim \lambda^{\delta(j)-l}\||x-\cdot|^p
\left\langle \cdot \right\rangle^{-\beta/2+\max\{\delta(j),l\}}\|_{L^2}\\
      & \lesssim\lambda^{\delta(j)-l}\left\langle x \right\rangle^{p},
   \end{aligned}
\end{equation}
where  in the last inequality, we have used $\frac{\beta}{2}-\max\{p+l, p+j+1\}>\frac{n}{2}$ which follows from  $0\le l\le [\frac{n}{2m}]+1$, $p\le j$ and the assumption on $V$.
On the other hand, we use Lemma \ref{eqS-j-v-p}  to obtain
\begin{equation*}
\|Q_{j}v(\cdot)|x-\cdot|^p(|x-\cdot|-|x|)^{\gamma}\|_{L^2}\lesssim \left\langle x\right\rangle^{p+\gamma-\delta(j)}.
\end{equation*}
Hence, it follows that
\begin{equation*}\label{equ4.1.4}
\left\|  \partial_{\lambda}^l Q_j \left(v(\cdot)|x-\cdot|^p \sum_{\gamma=0}^{j} \frac{1}{\gamma !}\left(i \lambda_{k} s(|x-\cdot|-|x|)\right)^{\gamma} \right)\right\| _{L^2}
\lesssim
\begin{cases}
	\lambda^{-l}\left\langle x \right\rangle^{p-\delta(j)},\quad & \lambda\left\langle x \right\rangle\le 1,\\
	\lambda^{\delta(j)-l}\left\langle x \right\rangle^{p},\quad &  \lambda\left\langle x \right\rangle>1,	
\end{cases}
  \end{equation*}
which, together with \eqref{equ4.333}, yields \eqref{equ4.1.1}, and the proof is complete.
\end{proof}

Finally, we give the proof of Proposition \ref{lemma4.2}.

\begin{proof}[Proof of Proposition \ref{lemma4.2}]
We first prove \eqref{equ4.2.3}--\eqref{equ4.4} and divide it into two cases.

When $n=1$ and $\delta(j)=0$ ($j=0$), we set
\begin{equation*}
\begin{aligned}
k_{j,0,0}^{\pm}(\lambda,s,\cdot,x)&:=Q_0v\left(e^{\mp \i \lambda|x|} R_{0}^{ \pm}(\lambda^{2 m})(x-\cdot)\right),\\
k_{j,0,1}^{\pm}(\lambda,s,\cdot,x)&:=0.	
\end{aligned}
\end{equation*}
\eqref{equ4.2.3} immediately follows since $k_{j,0,0}^{\pm}$ is independent of $s$. Furthermore, we observe that $e^{i\lambda_k|x|}=e^{i\lambda|x|}$ when $k=0$, and $e^{\i\lambda_k|x|}=e^{-\i\lambda|x|}$ when $k=m$, then using the triangle inequality $\left||x-y|-| x|\right | \le |y|$ repeatedly, we obtain for $k\in I^{\pm}$ (as defined in \eqref{equ2.1.1'}) that
	\begin{equation*}\label{equ4.5}
			\left|\partial_{\lambda}^{l}\left(e^{\mp \i \lambda|x|} e^{\pm \i \lambda_k|x-y|}\right)\right| \lesssim_{l}\langle y\rangle^{l},\quad  l\in\mathbb{N}_0.
	\end{equation*}
This, together with \eqref{eq2.8},  implies when  $0<\lambda<1$ that
	\begin{equation*} \label{equ4.5.1}
		\left|\partial_{\lambda}^{l}\left(e^{\mp \i \lambda|x|} R_{0}^{ \pm}(\lambda^{2 m})(x-y)\right)\right| \lesssim \lambda^{1-2m-l}\langle y\rangle^{l}.
	\end{equation*}
Thus
\begin{equation*}
	\left\| v(y) \partial_{\lambda}^{l}\left(e^{\mp i\lambda |x|} R_{0}^{ \pm} (\lambda^{2 m})(x-y)\right)\right\|_{L_y^2} \lesssim \lambda^{1-2m-l},
\end{equation*}
holds for $l=0, 1$, which yields \eqref{equ4.3} and \eqref{equ4.4} for $n=1$ and $j=0$.
	
When $n=1$ with $j \in J_{\k}\setminus\{0\}$, or when $n \geq 2$ and $j \in J_{\k}$, we choose a smooth cutoff function $\phi \in C^\infty(\mathbb{R})$ satisfying
\begin{equation*}
\phi(t) = \begin{cases}
1, & |t| \leq \frac{1}{2}, \\
0, & |t| \geq 1,
\end{cases}
\end{equation*}
and set
\begin{equation}\label{eq4.theta}
  \theta=\begin{cases}
      \min\{\delta(j), 2m-n\}, \quad &\text{if $n<2m$},\\
      1,&\text{if $2m<n<4m$}.
  \end{cases}
\end{equation}
By \eqref{equ2.2.2}, we have 
\begin{equation*}	
R_0^{\pm}(\lambda^{2m})(x-y)=\begin{cases}
    \sum\limits_{0\le j\le \frac{\theta-1}{2}}a_j^\pm\lambda^{n-2m+2j}|x-y|^{2j}+r_{\theta}^{\pm}(\lambda)(x-y), \quad &\text{if $n<2m$},\\
    r_{2m-n}^{\pm}(\lambda)(x-y),&\text{if $2m<n<4m$},
\end{cases}
\end{equation*}
with 
\begin{align}\label{eq2.10-1}
r_{\theta}^{\pm}(\lambda)(x)= \sum_{j=\min\{0,\frac{n-3}{2}\}}^{\frac{n-3}{2}}\sum_{k\in I^{\pm}} C_{j,\theta}  \lambda_{k}^{n-2 m+\theta}
|x|^{\theta } \int_{0}^{1} e^{\i s \lambda_{k}|x|}(1-s)^{n-j+\theta-3} \d s,
\end{align}
as presented in \eqref{eq2.10}. Note the fact that 
$$Q_{j}\left(x^{\alpha} v(x)\right)=0 \quad \text{ for  $|\alpha|<\delta(j),~ \alpha \in \mathbb{N}_{0}^{n}$,} $$
one has 
$$Q_{j}vR_{0}^{\pm}(\lambda^{2 m})(x-\cdot)=Q_{j}vr_{\theta}^{\pm}(\lambda)(x-\cdot).$$
When $n\ge 3$, we also use Taylor's formula to obtain
\begin{equation}\label{eq:to locally intagral}
\begin{aligned}
R_0^{\pm}(\lambda^{2m})(x-y)=&\sum_{j=0}^{\frac{n-5}{2}}\sum_{k\in I^{\pm}} C_{j,\theta}  \lambda_{k}^{\frac{n+1}{2}-2 m}|x-y|^{-\frac{n-1}{2}} \int_{0}^{1} e^{\i s \lambda_{k}|x-y|}(1-s)^{\frac{n-1}{2}-j-2} \d s  \\
&+ \sum_{k\in I^{\pm}} e^{\i \lambda_{k}|x-y|} D_{\frac{n-3}{2}}\lambda_k^{\frac{n+1}{2}-2m} |x-y|^{-\frac{n-1}{2}},
\end{aligned}
\end{equation}
where we remark that each term in the RHS of \eqref{eq:to locally intagral} is locally integrable in $L^2_y$. When $n=1$, we define
 \begin{equation}\label{equ4.27.11}
\begin{cases}
k_{j,1,0}^{\pm}(\lambda,s,\cdot,x)=(1-\phi(\lambda\langle x\rangle))Q_jv\left(e^{\mp \i \lambda|x|} R_{0}^{\pm }(\lambda)(x-\cdot)\right), \\
k_{j,1,1}^{\pm}(\lambda,s,\cdot,x)=\phi(\lambda\langle x\rangle)Q_jv\left(e^{\mp \i \lambda s|x|}\sum\limits_{k\in I^{\pm}}C_{-1,\theta}\lambda_{k}^{1-2 m+\theta}|x-\cdot|^{\theta }e^{i s \lambda_{k}|x-\cdot|}(1-s)^{\theta-1}\right),
\end{cases}
\end{equation}
and when $n\geqslant3$, we define
 \begin{equation}\label{equ4.8.222}
\begin{cases}
k_{j,1,0}^{\pm}(\lambda,s,\cdot,x)=(1-\phi(\lambda\langle x\rangle))
		Q_jv\left(e^{\mp \i \lambda |x|} \sum\limits_{k\in I^{\pm}} D_{\frac{n-3}{2}} \lambda_k^{\frac{n+1}{2}-2m} \frac{e^{\i \lambda_{k}|x-\cdot|}}{|x-\cdot|^{\frac{n-1}{2}}}\right), \\
k_{j,1,1}^{\pm}(\lambda,s,\cdot,x)=\phi(\lambda\langle x\rangle)Q_jv\left( e^{\mp \i \lambda s|x|}\sum\limits_{k\in I^{\pm}}\sum\limits_{l=0}^{\frac{n-3}{2}} C_{l, \theta} \lambda_{k}^{n-2 m+\theta}|x-\cdot|^{\theta }  e^{\i s\lambda_{k}|x-\cdot|}(1-s)^{n-l+\theta-3}\right) \\
		\quad+(1-\phi(\lambda\langle x\rangle))Q_jv\left(e^{\mp \i s\lambda |x|} \sum\limits_{k\in I^{\pm}} \sum\limits_{l=0}^{\frac{n-5}{2}} C_{l,\theta} \lambda_k^{\frac{n+1}{2}-2 m}|x-\cdot|^{-\frac{n-1}{2}}e^{\i s \lambda_{k}|x-\cdot|}(1-s)^{\frac{n-5}{2}-l}\right),
\end{cases}
\end{equation}
where we have abused the notation on $k_{j,1,0}^{\pm}(\lambda,s,\cdot,x)$ for it does not depend on $s$.
Then we derive
\begin{equation*}
	\begin{aligned}
		Q_{j}vR_{0}^{\pm}(\lambda^{2 m})(x-\cdot)&=\phi(\lambda\langle x\rangle)Q_{j}v
		R_{0}^{\pm}(\lambda^{2 m})(x-\cdot)+(1-\phi(\lambda\langle x\rangle))Q_{j}vR_{0}^{\pm}(\lambda^{2 m})(x-\cdot)\\
		&=\phi(\lambda\langle x\rangle)Q_{j}vr_{\theta}^{+}(\lambda)(x-\cdot)+(1-\phi(\lambda\langle x\rangle))Q_{j}vR_{0}^{\pm}(\lambda^{2 m})(x-\cdot)\\
		&=\int_{0}^{1}e^{ \pm \i \lambda|x|}k_{j, 1,0}^{\pm}(\lambda,s,\cdot,x) \d s+\int_{0}^{1} e^{\pm \i \lambda s|x|} k_{j,1,1}^{\pm}(\lambda, s,\cdot, x) \d s,
	\end{aligned}
\end{equation*}
where in the third equality above, we use  \eqref{equ4.27.11} (when $n=1$),  \eqref{eq:to locally intagral} and \eqref{equ4.8.222} (when $n\ge 3$).
This proves \eqref{equ4.2.3}.

We next prove the estimates \eqref{equ4.3}, \eqref{equ4.4} and \eqref{equ4.4-1-1}. For each $l\in\{0,\cdots,[\frac{n}{2m}]+1\}$ and $k\in I^{\pm}$, it follows from  Lemma  \ref{prop4.1} that
\begin{equation}\label{equ4.17.3}
	W_1:=\left\|\partial_{\lambda}^{l}\left(\lambda_k^{n-2m+\theta}Q_{j} \left(|x-\cdot|^{\theta}v(\cdot) e^{is \lambda_{k}|x-\cdot|\mp is\lambda|x|}\right)\right) \right\|_{L^{2}}\lesssim\langle x\rangle^{\theta-\delta(j)}\lambda^{n-2m+\theta-l},
\end{equation}
and consequently when $\lambda\langle x\rangle\le 1$, it follows uniformly for $0\le s \le1$ that
\begin{equation}\label{equ4.17.3333}
W_1\lesssim\begin{cases}
	\lambda^{\frac{n+1}{2}-2m+\delta(j)-l}\langle x\rangle^{-\frac{n-1}{2}} ,	 &\text{if}\,\, j< 2m-\frac{n}{2},\\
	\lambda^{-l}\langle x\rangle^{-\frac{n+1}{2}} ,	 &\text{if}\,\,  j=2m-\frac{n}{2},
\end{cases}
\end{equation}
where we have used the fact that 
$$\mbox{$\theta-\delta(j)=\min\{\delta(j), 2m-n\}-\delta(j)\geq-\frac{n-1}{2}$}, \quad \text{if} ~~\mbox{$j<2m-\frac n2$},$$ 
in the last inequality. We apply  Lemma  \ref{prop4.1} with $p=-\frac{n-1}{2}$  and obtain
\begin{equation}\label{equ4.17.4444}
	W_2:=\left\|\partial_{\lambda}^l \left( \lambda_k^{\frac{n+1}{2}-2 m} Q_{j}v(\cdot)|x-\cdot|^{-\frac{n-1}{2}} e^{is \lambda_{k}|x-\cdot|\mp i s\lambda|x|}\right)\right\|_{L^{2}}\lesssim \lambda^{\frac{n+1}{2}-2m+\delta(j)-l}\langle x\rangle^{-\frac{n-1}{2}},
\end{equation}
and consequently when $\lambda\langle x\rangle>\frac12$, it follows that
\begin{equation}\label{equ4.17.4}
	W_2\lesssim\lambda^{n-2m+\delta(j)-l},
\end{equation}
which holds uniformly for $0\le s \le1$. Note that the support of $\phi(\lambda\langle x\rangle))$ is contained in $\{\lambda\langle x\rangle\le 1\}$ and the support of $1-\phi(\lambda\langle x\rangle)$ is contained in $\{\lambda\langle x\rangle>\frac12\}$. Since
$$n-2m+\theta=\min\{n-2m+\delta(j), 0\},$$
we conclude \eqref{equ4.3}, \eqref{equ4.4} and \eqref{equ4.4-1-1}.

We are left to prove \eqref{equ4.4.3}--\eqref{equ4.4.5}. For $j\in \{m-\frac{n}{2}, 2m-\frac{n}{2}\}$,
we define $k_{j,2,0}^{\pm}(\lambda,s,\cdot,x)=k_{j,1,0}^{\pm}(\lambda,s,\cdot,x)$ and $k_{j,2,1}^{\pm}(\lambda,s,\cdot,x)=k_{j,1,1}^{\pm}(\lambda,s,\cdot,x)$.
Instead of \eqref{eq4.theta}, we choose
\begin{equation*}\label{eq4.theta.11}
  \theta=\delta(j)=\max\{\mbox{$[j+\frac12]$}, 0\},
\end{equation*}
in \eqref{equ4.27.11} and \eqref{equ4.8.222}.
By \eqref{equ2.2.2}, we have
\begin{equation*}
\begin{aligned}
R_{0}^{+}(\lambda^{2 m})(x-y)-R_{0}^{-}(\lambda^{2 m})(x-y)
=&\sum_{l=0}^{\left[\frac{\theta-1}{2}\right]} (a_{l}^{+}-a_l^{-}) \lambda^{n-2 m+2 l}|x-y|^{2 l}\\
&+r_{\theta}^{+}(\lambda,|x-y|)-r_{\theta}^{-}(\lambda,|x-y|).
\end{aligned}
\end{equation*}
Therefore \eqref{equ4.4.3} follows. The estimates \eqref{equ4.4.4}-\eqref{equ4.34.3} follow from
\eqref{equ4.17.3}-\eqref{equ4.17.4} and a direct computation. Now the proof of Proposition \ref{lemma4.2} is finished.
\end{proof}

\end{appendix}

\section*{Acknowledgements}
We would like to express our sincere gratitude to all those who have supported and contributed to this work through their valuable insights, discussions, and assistance.

T. Huang was supported by National Key R\&D Program of China under the grant 2023YFA1010300 and the National Natural Science Foundation of China under the grants 12101621 and 12371244.
S. Huang was supported by the National Natural Science Foundation of China under the grants 12171178 and 12171442.
Q. Zheng was supported by the National Natural Science Foundation of China under the grant 12171178.

%
%


\end{document}